\theoremstyle{plain}
\newtheorem{theorem}{Theorem}[section]
\newtheorem*{theorem*}{Theorem}
\newtheorem{lemma}[theorem]{Lemma}
\newtheorem*{lemma*}{Lemma}
\newtheorem{proposition}[theorem]{Proposition}
\newtheorem*{proposition*}{Proposition}
\newtheorem*{conjecture*}{Conjecture}
\newtheorem{corollary}[theorem]{Corollary}
\newtheorem*{corollary*}{Corollary}
\theoremstyle{definition}
\newtheorem{definition}[theorem]{Definition}
\newtheorem{remark}[theorem]{Remark}
\newtheorem{claim}{Claim}
\numberwithin{equation}{section}
\newcommand{\R}{\mathbb{R}}
\newcommand{\C}{\mathbb{C}}
\renewcommand{\H}{\mathbb{H}}
\newcommand{\Z}{\mathbb{Z}}
\newcommand{\N}{\mathbb{N}}
\renewcommand{\P}{\mathbb{P}}
\newcommand{\av}[1]{\lvert #1 \rvert}
\newcommand{\Id}{\mathrm{Id}}
\newcommand\restr[2]{{% we make the whole thing an ordinary symbol
  \left.\kern-\nulldelimiterspace % automatically resize the bar with \right
  #1 % the function
  \vphantom{\big|} % pretend it's a little taller at normal size
  \right|_{#2} % this is the delimiter
  }}
\newcommand{\iprod}{\mathbin{\lrcorner}}
\newcommand{\dS}{{\ooalign{\(S\)\cr\hidewidth\(/\)\hidewidth\cr}}}
\renewcommand{\Im}{\mathrm{Im}}
\renewcommand{\Re}{\mathrm{Re}}
\DeclareMathOperator{\vol}{vol}
\DeclareMathOperator{\Spin}{Spin}
\DeclareMathOperator{\G2}{G_2}
\DeclareMathOperator{\SU}{SU}
\DeclareMathOperator{\SO}{SO}
\DeclareMathOperator{\GL}{GL}
\DeclareMathOperator{\Sp}{Sp}
\DeclareMathOperator{\Hol}{Hol}
\begin{document}
            
            	%\date{13/06/2020}
            	\title[]
            	{Cayley fibrations in the Bryant-Salamon Spin(7) manifold}
            	\author{Federico Trinca}
            	\address{Mathematical Institute, University of Oxford, Woodstock Road, Oxford, OX2 6GG,
United Kingdom}
            	\email{Federico.Trinca@maths.ox.ac.uk}
                   
            \begin{abstract} 
On each complete asymptotically conical $\Spin(7)$ manifold constructed by Bryant and Salamon, including the asymptotic cone, we consider a natural family of $\SU(2)$ actions preserving the Cayley form. For each element of this family, we study the (possibly singular) invariant Cayley fibration, which we describe explicitly, if possible. These can be reckoned as generalizations of the trivial flat fibration of $\R^8$ and the product of a line with the Harvey–Lawson coassociative fibration of $\R^7$. The fibres will provide new examples of asymptotically conical Cayley submanifolds in the Bryant–Salamon manifolds of topology $\R^4, \R\times S^3$ and $\mathcal{O}_{\C\P^1} (-1)$.
            \end{abstract}
            
            	\maketitle

       \section{Introduction}
In 1926, Cartan showed how to associate a group to any Riemannian manifold through parallel transport \cite{Car26}. He called such a group the holonomy group of the Riemannian manifold, and he used it to classify symmetric spaces. Almost 30 years later, Berger found all the groups that could appear as the holonomy of a simply-connected, nonsymmetric, and irreducible Riemannian manifold \cite{Ber55}. The exceptional holonomy groups $\G2$ and $\Spin(7)$ belonged to this list. The existence of Riemannian manifolds with such holonomy was unknown until Bryant \cite{Bry87} provided incomplete examples and Bryant--Salamon \cite{BS89} provided complete ones. In particular, Bryant and Salamon constructed a $1$-parameter family of torsion-free $\G2$-structures on $\Lambda^2_{-} (T^\ast S^4)$, $\Lambda^2_{-} (T^\ast \C\P^2)$, $\dS (S^3)$, and a $1$-parameter family of torsion-free $\Spin(7)$-structures on $\dS_{-} (S^4)$. The holonomy principle implies that the holonomy group of these manifolds is contained in $\G2$ and $\Spin(7)$, respectively. As Bryant and Salamon proved that their examples have full holonomy, the problem of the classification of Riemannian holonomy groups is settled. 

Manifolds with exceptional holonomy are Ricci-flat and admit natural calibrated submanifolds. These are the associative 3-folds and the coassociative 4-folds in the $\G2$ case, while they are the Cayley 4-folds in the $\Spin(7)$ one. A crucial aspect of the study of manifolds with exceptional holonomy regards fibrations through these natural submanifolds. One of the main reasons for the interest in calibrated fibrations comes from mathematical physics. Indeed, analogously to the SYZ conjecture \cite{SYZ96}, that relates special Lagrangian fibrations in mirror Calabi--Yau manifolds, one would expect similar dualities for coassociative fibrations in the $\G2$ case and Cayley fibrations in the $\Spin(7)$ one. We refer the reader to \cite{GYZ03, Ach98} for further details. Another reason lies in the attempt to understand and construct new compact manifolds with exceptional holonomy through these fibrations \cite{Don17}. 

Some work has been carried out in the $\G2$ case (see f.i. \cite{ABS20, Bar10, Don17, KLo21, Li19}), while little is known in the $\Spin(7)$ setting. In particular, Karigiannis and Lotay \cite{KLo21} constructed an explicit coassociative fibration on each $\G2$ Bryant--Salamon manifold and the relative asymptotic cone. To do so, they chose a $3$-dimensional Lie group acting through isometries preserving the $\G2$-structure, and they imposed the fibres to be invariant under this group action. In this way, the coassociative condition is reduced to a system of tractable ODEs defining the fibration. Previously, this idea was used to study cohomogeneity one calibrated submanifolds related to exceptional holonomy in the flat case by Lotay \cite{Lot07} and in $\Lambda^2_{-} (T^\ast S^4)$ by Kawai \cite{Kaw18}. Analogously, we consider Cayley fibrations on each $\Spin(7)$ Bryant--Salamon manifold and the relative asymptotic cone, which are invariant under a natural family of structure-preserving $\SU(2)$ actions.

The first key observation, due to Bryant and Salamon \cite{BS89}, is that $\Sp(2)\times\Sp(1)$ is contained in the subgroup of the isometry group that preserves the $\Spin(7)$-structure. Indeed, one can lift an action of $\SO(5)$ on $S^4$ to an action of $\Spin(5)\cong\Sp(2)$ on the spinor bundle of $S^4$. The $\Sp(1)$ factor of $\Sp(2)\times\Sp(1)$ comes from a twisting of the fibre. Clearly, this group admits plenty of $3$-dimensional subgroups. The family we consider consists of the subgroups that respect the direct product, i.e. that do not sit diagonally in $\Sp(2)\times\Sp(1)$. Through Lie group theory, it is easy to find these subgroups. Indeed, they either are the whole $\Sp(1)$, appearing in the second factor or the lift of one of the following subgroups of $\SO(5)$, which are going to be contained in the first factor:
\begin{align*}
&\SO(3)\times \Id_2, \hspace{30pt} \Sp(1)\times\Id_1,\\
&\SO(3)\hspace{5pt} \textup{acting irreducibly on } \R^5,
\end{align*}
where $\Sp(1)\times\Id_1$ denotes both the subgroup acting on $\H\times\R$ by left multiplication and by right multiplication of the quaternionic conjugate. Observe that their lifts to $\Sp(2)$ are all diffeomorphic to $\SU(2)\cong\Sp(1)$. Moreover, the $\Sp(1)$ contained in the second factor will only act on the fibres of $\dS_{-} (S^4)$, leaving the base fixed.

\subsection*{Summary of results and organization of the paper}
In section \ref{Section prelims}, we briefly review some basic results on $\Spin(7)$ and Riemannian geometry. In particular, once fixed the convention for the $\Spin(7)$-structure, we recall the definition of Cayley submanifolds, together with Karigiannis--Min-Oo's characterization \cite[Proposition 2.5]{KM05}, and Cayley fibrations. Similarly to \cite[Definition 1.2]{KLo21}, our notion of Cayley fibrations allows the fibres to be singular and to self-intersect. Finally, we provide the definitions of asymptotically conical and conically singular manifolds.

Section \ref{Section BS} contains a detailed description of the 1-parameter family of $\Spin(7)$ manifolds constructed by Bryant--Salamon. Here, we also discuss the automorphism group. In particular, we briefly explain why the system of ODEs characterizing the fibration induced by the irreducible action of $\SO(3)$ on $S^4$ is going to be too complicated to be solved. 

Starting from section \ref{Section fibre action}, we deal with Cayley fibrations. Here, we study the fibration invariant under the $\SU(2)$ acting only on the fibres of $\dS_{-} (S^4)$. In this case, the fibration is trivial, i.e. coincide with the usual projection map from $\dS_{-} (S^4)$ to $S^4$. We compute the multi-moment map in the sense of \cite{MS12,MS13}, which is a polynomial depending on the square of the distance function. Blowing-up at any point of the zero section, the fibration becomes the trivial flat fibration of $\R^8$. 

In section \ref{Section SO(3)xId_2}, we consider the action on $\dS_{-} (S^4)$ induced by $\SO(3)\times \Id_2\subset \SO(5)$ acting on $S^4$. Under a suitable choice of metric-diagonalizing coframe on an open, dense set $\mathcal{U}$, the system of ODEs characterizing the Cayley condition is completely integrable, and hence we obtain a locally trivial fibration on $\mathcal{U}$ whose fibres are Cayley submanifolds. Extending by continuity the fibration to the whole $\dS_{-} (S^4)$, we prove that the parameter  is $S^4$ and the fibres are topological $\R^4$s, $\mathcal{O}_{\C\P^1} (-1)$s or $\R\times S^3$s. Through a asymptotic analysis, it is easy to see that the $\R^4$s separating the Cayleys of different topology are the only singular ones. The singularity is asymptotic to the Lawson--Osserman cone \cite{LO77}. Each Cayley intersects at least another one in the zero section of $\dS_{-} (S^4)$, and, at infinity, they are asymptotic to a non-flat cone with link $S^3$ endowed with either the round metric or a squashed metric. While in the $\G2$ case \cite[Subsection 5.7, Subsection 6.7]{KLo21}, the multi-moment map they explicitly compute has a clear geometrical interpretation, it does not in our case. Finally, keeping track of the Cayley fibration, blowing-up at the north pole, we obtain the fibration on $\R^8$, which is given by the product of the $\SU(2)$-invariant coassociative fibration constructed by Harvey and Lawson \cite[Section IV.3]{HL82} with a line. 

We deal with the Cayley fibration invariant under the $\SU(2)$ action induced from $\Sp(1)\times\Id_1$ in Section \ref{Section Sp(1)xId_1}. The left quaternionic multiplication gives the same fibration as the conjugate right quaternionic multiplication up to orientation. Contrary to the previous case, we can not completely integrate the system of ODEs we obtain on an open, dense set $\mathcal{U}$. However, we deduce all the information we are interested in via a dynamical system argument. In particular, we show that the fibres are parametrized by a $4$-dimensional sphere and that they are smooth submanifolds of topology $S^3\times\R$, $\R^4$ or $S^4$. The unique point of intersection is the south (north) pole of the zero section, where all fibres of topology $\R^4$ and the sole Cayley of topology $S^4$ (i.e. the zero section) intersect. It is easy to show that all Cayleys are asymptotic to a non-flat cone with round link $S^3$. We also compute the multi-moment map, and show that the fibration converges to the trivial flat fibration of $\R^8$ when we blow-up at the north pole. 

The last group action that would be natural to study is the lift of $\SO(3)$ acting irreducibily on $\R^5$. However, in this case the ODEs become extremely complicated and can not be solved explicitely. Moreover, the analogous action on the flat $\Spin(7)$ space and on the Bryant-Salamon $\G2$ manifold $\Lambda^2_{-} (T^\ast S^4)$ was studied by Lotay \cite[Subsection 5.3.3]{Lot05} and Kawai \cite{Kaw18}, respectively. In both cases, the defining ODEs for Cayley submanifolds and coassociative submanifolds were too complicated.

\subsection*{Acknowledgements}
The author wishes to thank his supervisor Jason D. Lotay for suggesting this project and for his enormous help and guidance. He also wishes to thank the referee for carefully reading an earlier version of this paper and for greatly improving its exposition. This work was supported by the Oxford-Thatcher Graduate Scholarship.

\section{Preliminaries}\label{Section prelims}
In this section, we recall some basic results concerning $\Spin(7)$ manifolds, Cayley submanifolds and Riemannian conifolds.
\subsection{Spin(7) manifolds}\label{Spin(7) structures}
We use the same convention of \cite{BS89} and \cite{HL82} to define $\Spin(7)$-structures and $\Spin(7)$ manifolds. 

The local model is $\R^8\cong\R^4\oplus\R^4$ with coordinates $(x_0,...,x_3,a_0,...,a_3)$, and Cayley form: 
\[
\Phi_{\R^8}=d x_0\wedge d x_1\wedge d x_2\wedge d x_3+ d a_0\wedge d a_1\wedge d a_2\wedge d a_3+\sum_{i=1}^3 	\omega_i\wedge\eta_i,
\]
where $\omega_i=d x_0\wedge dx_i-dx_j\wedge dx_k$, $\eta_i=d a_0\wedge da_i-da_j\wedge da_k$ and $(i,j,k)$ is a cyclic permutation of $(1,2,3)$. Note that $\{\omega_i\}_{i=1}^3$ and $\{\eta_i\}_{i=1}^3$ are the standard basis of the anti-self-dual 2-forms on the two copies of $\R^4$. It is well-known that $\Spin(7)$ is isomorphic to the stabilizer of $\Phi_{\R^8}$ in $\mathrm{GL}(8,\R)$.
\begin{remark}
This choice of convention for $\Phi_{\R^8}$ is compatible with the fact that we will be working on $\dS_{\!-}(S^4)$. Indeed, we can identify our local model with $\dS_{\!-}(\R^4)$. Further details regarding the sign conventions and orientations for $\Spin(7)$-structures can be found in \cite{Kar10}.
\end{remark}

\begin{definition}
Let $M$ be a manifold and let $\Phi$ be a 4-form on $M$. We say that $\Phi$ is admissible if, for every $x\in M$, there exists an oriented isomorphism $i_x:\R^8\to T_x M$ such that $i_x^{\ast} \Phi=\Phi_{\R^8}$. We also refer to $\Phi$ as a $\Spin(7)$-structure on $M$.  
\end{definition}
The $\Spin(7)$-structure on $M$ also induces a Riemannian metric, $g_\Phi$, and an orientation, $\vol_\Phi$, on $M$. With respect to these structures $\Phi$ is self-dual. We refer the reader to \cite{SW17} for further details.
\begin{definition}
Let $M$ be a manifold and let $\Phi$ be a $\Spin(7)$-structure on $M$. We say that $(M,\Phi)$ is a $\Spin(7)$ manifold if the $\Spin(7)$-structure is torsion-free, i.e., $d\Phi=0$. In this case, $\Hol(g_\Phi)\subseteq \Spin(7)$.
\end{definition}
%\begin{remark}
%The definition of $\Spin(7)$-structure implies an orthogonal splitting of the bundles of $k$-forms into irreducible components. In particular, $\Lambda^2 T^{\ast} M=\Lambda^2_7 \oplus\Lambda^2_21$, where $\Lambda^2_i$ corresponds to an irreducible representation of $\Spin(7)$ of dimension $i$. Explicitly, $\Lambda_7^2=\{\beta\in \Lambda^2 T^{\ast} M:$
%\end{remark}

\subsection{Cayley submanifolds and Cayley fibrations} \label{subsection cayley submanifolds and cayley fibration}
Given $(M,\Phi)$, $\Spin(7)$ manifold, it is clear that $\Phi$ has comass one, and hence, it is a calibration.
\begin{definition}
We say that a 4-dimensional oriented submanifold is Cayley if it is calibrated by $\Phi$, i.e., if $\restr{\Phi}{N^4}=\vol_{N^4}$. Fixed a point $p\in M$, a 4-dimensional oriented vector subspace $H$ of $T_p M$ is said to be a Cayley 4-plane if $\restr{\Phi}{p}$ calibrates $H$. 
\end{definition}

\begin{remark}
Observe that $N$ is a Cayley submanifold if and only if $T_p N$ is a Cayley 4-plane for all $p\in N$.
\end{remark}

We now give Karigiannis and Min-Oo characterization of the Cayley condition.

\begin{proposition}[Karigiannis--Min-Oo {\cite[Proposition 2.5]{KM05}}] \label{Cayley condition KM}
The subspace spanned by tangent vectors $u,v,w,y$ is a Cayley 4-plane, up to orientation, if and only if the following form vanishes: 
\[
\eta=\pi_7\left(u^{\musFlat}\wedge B(v,w,y)+v^{\musFlat}\wedge B(w,u,y)+w^{\musFlat}\wedge B(u,v,y)+y^{\musFlat}\wedge B(v,u,w)\right), 
\]
where $$B(u,v,w):=w\iprod v\iprod u\iprod \Phi$$ and $$\pi_7(u^{\musFlat}\wedge v^{\musFlat}):=\frac{1}{4} \left(u^{\musFlat}\wedge v^{\musFlat}+u\iprod v\iprod \Phi\right).$$
\end{proposition}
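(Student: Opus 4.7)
The plan is to exploit the $\Spin(7)$-equivariance of both sides, reduce the direct implication to a calculation on the standard Cayley 4-plane, and handle the converse by identifying $\eta$ with a canonical equivariant projection. To begin, I would observe that both ``$H = \mathrm{span}(u,v,w,y)$ is Cayley'' and ``$\eta = 0$'' are $\Spin(7)$-invariant conditions on $H$: the operations $\iprod$, $\wedge$, and $\pi_7$ are all $\Spin(7)$-equivariant, while $\Phi$ is $\Spin(7)$-invariant. Since iterated interior products are totally antisymmetric, $B$ is totally antisymmetric in its three arguments; a direct check swapping pairs shows the four-term sum defining $\eta$ is totally alternating multilinear in $(u,v,w,y)$. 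Hence $\eta$ depends only on the simple 4-vector $u\wedge v\wedge w\wedge y$ (up to sign under orientation reversal), so $\eta = 0$ really is a property of the unoriented 4-plane.

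For the implication ``$H$ Cayley $\Rightarrow \eta = 0$'', I would invoke the classical Harvey--Lawson result that $\Spin(7)$ acts transitively on oriented Cayley 4-planes in $\R^8$ and thereby reduce to the standard Cayley plane $H_0 = \mathrm{span}(\partial_{x_0},\partial_{x_1},\partial_{x_2},\partial_{x_3})$ with the standard basis. A short computation from the explicit formula for $\Phi_{\R^8}$ yields
\[
B(\partial_{x_j},\partial_{x_k},\partial_{x_\ell}) = \pm \, dx_m \qquad \text{for any permutation } (j,k,\ell,m) \text{ of } (0,1,2,3),
\]
with the sign arranged so that in each summand of $\eta$ the one-form $B(v,w,y)$ is proportional to $u^{\musFlat}$. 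Therefore each of the four terms $u^{\musFlat} \wedge B(v,w,y) = \pm dx_i \wedge dx_i = 0$ already vanishes before applying $\pi_7$, so $\eta = 0$.

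For the converse ``$\eta = 0 \Rightarrow H$ Cayley up to orientation'', I would view $T:(u,v,w,y)\mapsto \eta$ as a $\Spin(7)$-equivariant linear map $\Lambda^4 T_pM \to \Lambda^2_7 T_pM$ and argue that, up to a nonzero scalar, it coincides with the composition of the equivariant projection $\Lambda^4 \to \Lambda^4_7$ with the canonical isomorphism $\Lambda^4_7 \cong \Lambda^2_7$. This is forced by Schur's lemma, since $\Lambda^4_7$ appears with multiplicity one in the decomposition $\Lambda^4 T_pM = \Lambda^4_1\oplus\Lambda^4_7\oplus\Lambda^4_{27}\oplus\Lambda^4_{35}$. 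Non-triviality of the scalar can be verified by evaluating $T$ on an explicit non-Cayley 4-plane such as $\mathrm{span}(\partial_{x_0},\partial_{x_1},\partial_{x_2},\partial_{a_0})$ and observing that the result is nonzero. Combined with the classical characterisation that a simple 4-form has vanishing $\Lambda^4_7$ component precisely when the associated 4-plane is Cayley up to orientation, this yields the desired equivalence.

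The main obstacle is the converse direction: one must both identify $T$ with the equivariant projection described above and verify its non-vanishing, and then match the sign conventions and the normalisation $\tfrac{1}{4}$ in the definition of $\pi_7$ against the canonical projection. The delicate bookkeeping of signs produced by iterated interior products in $B$, together with ensuring that no cancellation inside $\pi_7$ collapses $T$ unexpectedly, is where I would expect most of the technical work to lie.
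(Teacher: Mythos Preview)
The paper does not prove this proposition at all: it is stated with attribution to Karigiannis--Min-Oo \cite[Proposition 2.5]{KM05} and used as a black box, so there is no proof in the paper to compare your proposal against.

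That said, your sketch is a reasonable route to an independent proof. The forward direction via transitivity of $\Spin(7)$ on Cayley planes and a direct check on the standard plane is clean and correct. The converse is where you should be careful: you reduce to the statement that a simple $4$-vector has vanishing $\Lambda^4_7$-component if and only if the associated $4$-plane is Cayley, but this ``classical characterisation'' is essentially equivalent in strength to what you are trying to prove, and in the literature it is typically derived from the Harvey--Lawson description of Cayley planes via the triple cross product and the imaginary-octonion-valued $4$-form $\tau$. If you want a self-contained argument, you should either prove that equivalence directly (e.g.\ by showing that the $\Lambda^4_7$-component of $u\wedge v\wedge w\wedge y$ is, up to a nonzero constant and the identification $\Lambda^4_7\cong\Lambda^2_7$, exactly Harvey--Lawson's $\tau(u,v,w,y)$), or bypass it by a dimension count on the zero locus of $\eta$ inside the Grassmannian and comparing with the dimension of the Cayley Grassmannian. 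Your Schur's-lemma identification of $T$ with the projection is correct in spirit, but the non-vanishing check on a single non-Cayley plane only shows $T\neq 0$ globally; you still need that $T$ vanishes \emph{only} on Cayley planes, which is precisely the content of the cited characterisation.
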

\begin{remark}\label{Remark explanation pi_7}
The reduction of the structure group of $M$ to $\Spin(7)$ induces an orthogonal decomposition of the space of differential $k$-forms for every $k$, which corresponds to an irreducible representation of $\Spin(7)$. In particular, if $k=2$, the irreducible representations of $\Spin(7)$ are of dimension $7$ and $21$. At each point $x\in M$, these representations induce the decomposition of $\Lambda^2 (T^{\ast}_x M)$ into two subspaces, which we denote by $\Lambda^2_7$ and $\Lambda^2_{21}$, respectively. The map $\pi_7$ defined in Proposition \ref{Cayley condition KM} is precisely the projection map from the space of two-forms to $\Lambda^2_7$. Further details can be found in \cite{SW17}.
\end{remark}
Following \cite{KLo21}, we extend the definition of Cayley fibration so that it may admit intersecting fibres and singular fibres. 
\begin{definition}\label{Cayley fibration definition}
Let $(M,\Phi)$ be a $\Spin(7)$ manifold. $M$ admits a Cayley fibration if there exists a family of Cayley submanifolds $N_b$ (possibly singular) parametrized by a $4$-dimensional space $\mathcal{B}$ satisfying the following properties: 
\begin{itemize}
\item $M$ is covered by the family $\{N_b\}_{b\in\mathcal{B}}$;
\item there exists an open dense set $\mathcal{B}^{\circ}\subset\mathcal{B}$ such that $N_b$ is smooth for all $b\in \mathcal{B}^{\circ}$;
\item there exists an open dense set $M'\subset M$ and a smooth fibration $\pi: M'\to \mathcal{B}$ with fibre $N_b$ for all $b\in\mathcal{B}$.
\end{itemize}
\end{definition}
\begin{remark}
The last point allows the Cayley submanifolds in the family $\mathcal{B}$ to intersect. Indeed, this may happen in $M\setminus M'$. Moreover, we may lose information (e.g. completeness and topology) when we restrict the Cayley fibres to $M'$.
\end{remark}

We conclude this subsection explaining how we determine the topology of $\R^2$ bundles over $S^2\cong\C\P^1$ arising as the smooth fibres of a Cayley fibration. This is the same discussion used in \cite{KLo21}. Let $N$ be the total space of an $\R^2$-bundle over $\C\P^1$ which is also a Cayley submanifold of a $\Spin(7)$ manifold $(M,\Phi)$. Since $N$ is orientable and it is the total space of a bundle over an oriented base, it is an orientable bundle. We deduce that $N$ is homeomorphic to a holomorphic line bundle over $\C\P^1$. These objects are classified by an integer $k\in\Z$ and are denoted by $\mathcal{O}_{\C\P^1}(k)$. Moreover, for $k>0$ we have the following topological characterization of $\mathcal{O}_{\C\P^1}(-k)$:
\begin{align*}
\mathcal{O}_{\C\P^1}(-k)\setminus \C\P^1\cong \C^2/\Z_k\cong \R^+\times(S^3/\Z_k).
\end{align*} 
In the situation we will consider, the submanifolds we construct have the form $N\setminus S^2=\R^+\times S^3$. Hence, the only possibility is to obtain topological $\mathcal{O}_{\C\P^1} (-1)$s.

\subsection{Riemannian conifolds}We now recall the definitions of asymptotically conical and conically singular  Riemannian manifolds.
\begin{definition}
A Riemannian cone is a Riemannian manifold $(M_0,g_0)$ with $M_0=\R^+ \times \Sigma$ and $g_0=dr^2+r^2 g_\Sigma$, where $r$ is the coordinate on $\R^+$ and $g_\Sigma$ is a Riemannian metric on the link of the cone, $\Sigma$. 
\end{definition}
\begin{definition}
We say that a Riemannian manifold $(M,g)$ is asymptotically conical (AC) with rate $\lambda<0$ if there exists a Riemannian cone $(M_0,g_0)$ and a diffeomorphism $\Psi: (R,\infty)\times \Sigma \to M\setminus K$ satisfying: 
\[
\av{\nabla^j (\Psi^\ast g-g_0)}=O(r^{\lambda-j}) \hspace{15pt}  r\rightarrow\infty \hspace{5pt} \forall j\in \N,
\]
where $K$ is a compact set of $M$ and $R>0$. $(M_0,g_0)$ is the asymptotic cone of $(M,g)$ at infinity.
\end{definition}

\begin{definition}
We say that a Riemannian manifold $(M,g)$ is conically singular with rate $\mu>0$ if there exists a Riemannian cone $(M_0,g_0)$ and a diffeomorphism $\Psi: (0,\epsilon)\times \Sigma \to M\setminus K$ satisfying: 
\[
\av{\nabla^j (\Psi^\ast g-g_0)}=O(r^{\mu-j}) \hspace{15pt}  r\rightarrow 0 \hspace{5pt} \forall j\in \N,
\]
where $K$ is a closed subset of $M$ and $\epsilon>0$. $(M_0,g_0)$ is the asymptotic cone of $(M,g)$ at the singularities.
\end{definition}

\begin{remark}
As $\Sigma$ does not need to be connected, AC manifolds may admit more than one end and asymptotically singular manifolds may admit more than one singular point. 
\end{remark}

\section{Bryant--Salamon \texorpdfstring{$\Spin(7)$}{Lg} manifolds}\label{Section BS}
In this section we will describe the central objects of this work, i.e., the $\Spin(7)$ manifolds constructed by Bryant and Salamon in \cite{BS89}. There, they provided a $1$-parameter family of torsion-free $\Spin(7)$-structures on $M:=\dS_{\!-}(S^4)$, the negative spinor bundle on $S^4$. The $4$-dimensional sphere is endowed with the metric of constant sectional curvature $k$, which is the unique spin self-dual Einstein $4$-manifold with positive scalar curvature \cite{Hit81}. Without loss of generality, we rescale the sphere so that $k=1$.

\begin{remark}
The Bryant--Salamon construction on $S^4$ also works on spin $4$-manifolds with self-dual Einstein metric, but negative scalar curvature, and on spin orbifolds with self-dual Einstein metric. However, in these cases, the metric is not complete or smooth. 
\end{remark}

\subsection{The negative spinor bundle of \texorpdfstring{$S^4$}{Lg}}
Let $S^4$ be the 4-sphere endowed with the Riemannian metric of constant sectional curvature $1$. As $S^4$ is clearly spin, given $P_{\SO(4)}$ frame bundle of $S^4$ we can find the spin structure $P_{\Spin(4)}$ together with the spin representation: 
\[
\mu:=(\mu_{+},\mu_{-}):\Sp(1)\times\Sp(1)\cong\Spin(4)\to\GL(\H)\times\GL(\H),
\]
where $\mu_{\pm}(p_{\pm})(v):=v\overline{p}_{\pm}$. Let $\tilde{\pi}:P_{\Spin(4)}\to P_{\SO(4)}$ be the double cover in the definition of spin structure, and let $\tilde{\pi}^n_0:\Spin(n)\to\SO(n)$ be the double (universal) covering map for all $n\geq3$. The negative spinor bundle over $S^4$ is defined as the associated bundle:
$$\dS_{\!-}(S^4):=P_{\Spin(4)}\times_{\mu_{-}} \H.$$
The positive spinor bundle is defined analogously, taking $\mu_{+}$ instead. 

Given an oriented local orthonormal frame for $S^4$, $\{e_0,e_1,e_2,e_3\}$, the real volume element $e_0\cdot e_1\cdot e_2\cdot e_3$ acts as the identity on the negative spinors and as minus the identity on the positive ones. Now, let $\{b_0,b_1,b_2,b_3\}$ be the dual coframe of $\{e_0,e_1,e_2,e_3\}$, let $\tilde{\omega}$ the connection 1-forms relative to the Levi-Civita connection of $S^4$ with respect to the frame $\{e_0,e_1,e_2,e_3\}$ and let $\{\sigma_1, \sigma_i, \sigma_j, \sigma_k\}$ a local orthonormal frame for the negative spinor bundle corresponding to the standard basis of $\{1,i,j,k\}$ in this trivialization. Hence, we can define the linear coordinates $(a_0, a_1, a_2, a_3)$ which parametrize a point in the fibre as $a_0 \sigma_1+a_1 \sigma_i+a_2 \sigma_j+a_3 \sigma_k$.

By the properties of the spin connection and the fact we are working on the negative spinor bundle, we can write: 
\begin{align*}
\nabla \sigma_{\alpha}&=\left(\rho_1 \mu_{-} (e_2\cdot e_3)+\rho_2 \mu_{-} (e_3\cdot e_1)+\rho_3 \mu_{-} (e_1\cdot e_2) \right)\sigma_{\alpha} \\
&=\left(\rho_1 \mu_{-} (i)+\rho_2 \mu_{-} (j)+\rho_3 \mu_{-} (k) \right) \sigma_{\alpha},
\end{align*}
where $2\rho_1=\tilde{\omega}^3_2-\tilde{\omega}^1_0$, $2\rho_2=-\tilde{\omega}^2_0-\tilde{\omega}^3_1$ and $2\rho_3=\tilde{\omega}^2_1-\tilde{\omega}^3_0$. It is well-known that these are the connection forms on the bundle of anti-self-dual 2-forms, with respect to the connection induced by the Levi-Civita connection on $S^4$ and the frame given by $\Omega_i:=b_0\wedge b_i-b_j\wedge b_k$. As usual, $(i,j,k)$ is a cyclic permutation of $(1,2,3)$. The $\rho_i$s are characterized by: 
\begin{align}
d 
\begin{pmatrix}
\Omega_1 \\ \Omega_2 \\ \Omega_3
\end{pmatrix}
=-
\begin{pmatrix}
0 & -2\rho_3 & 2\rho_2 \\ 2\rho_3 &0&-2\rho_1 \\ -2\rho_2&2\rho_1&0
\end{pmatrix}
\wedge\begin{pmatrix}
\Omega_1 \\ \Omega_2 \\ \Omega_3
\end{pmatrix}, \label{comprho}
\end{align}
and the vertical one forms are: 
\begin{equation}\label{compVert1Forms}
\begin{aligned}
\xi_0&=da_0+\rho_1 a_1+ \rho_2 a_2+\rho_3 a_3, \hspace{20pt} &&\xi_1=da_1-\rho_1 a_0- \rho_3 a_2+\rho_2 a_3,\\
\xi_2&=da_2-\rho_2 a_0+ \rho_3 a_1-\rho_1 a_3,  &&\xi_3=da_3-\rho_3 a_0- \rho_2 a_1+\rho_1 a_2.
\end{aligned}
\end{equation}
\begin{remark}
If we denote by $\pi_{S^4}$ the vector bundle projection map from $\dS_{\!-}(S^4)$ to $S^4$, we can obtain horizontal forms on $\dS_{\!-}(S^4)$ via pull-back. For example, $\{\pi_{S^4}^{\ast}(b_i)\}_{i=1}^4$ and the linear combinations of their wedge product are horizontal forms on $\dS_{\!-}(S^4)$. In order to keep our notation light, we will omit the pullback from now on. 
\end{remark}
As $S^4$ is self-dual and with scalar curvature equal to $12k$, we have: 
\begin{align*}
d 
\begin{pmatrix}
\rho_1 \\ \rho_2 \\ \rho_3
\end{pmatrix}
=-2
\begin{pmatrix}
\rho_2\wedge\rho_3 \\ \rho_3\wedge\rho_1 \\ \rho_1\wedge\rho_2
\end{pmatrix}
+\frac{1}{2}\begin{pmatrix}
\Omega_1 \\ \Omega_2 \\ \Omega_3
\end{pmatrix},
\end{align*}
which is equivalent to \cite[p. 842]{BS89} and \cite[3.24]{KLo21}. We can use it to compute:
\begin{equation}\label{dxi_i}
\begin{aligned}
d\xi_0&=\xi_1\wedge\rho_1+\xi_2\wedge\rho_2+\xi_3\wedge\rho_3+1/2\left(a_1\Omega_1+a_2\Omega_2+a_3\Omega_3\right), \\
d\xi_1&=-\xi_0\wedge\rho_1-\xi_2\wedge\rho_3+\xi_3\wedge\rho_2+1/2\left(-a_0\Omega_1-a_2\Omega_3+a_3\Omega_2\right), \\
d\xi_2&=-\xi_0\wedge\rho_2+\xi_1\wedge\rho_3-\xi_3\wedge\rho_1+1/2\left(-a_0\Omega_2+a_1\Omega_3-a_3\Omega_1\right), \\
d\xi_3&=-\xi_0\wedge\rho_3-\xi_1\wedge\rho_2+\xi_2\wedge\rho_1+1/2\left(-a_0\Omega_3-a_1\Omega_2+a_2\Omega_1\right),
\end{aligned}
\end{equation}
that is going to be useful below. 
\begin{remark}
 A detailed account of spin geometry can be found in \cite{LM89}. Observe that, there, the definition of positive and negative spinors is interchanged. We opted to stay consistent with \cite{BS89}.
Indeed, the vertical 1-forms we obtain coincide with the ones obtained by Bryant and Salamon, up to renaming the $\rho_i$s. The same holds for the relative exterior derivatives.
\end{remark}
\subsection{The \texorpdfstring{$\Spin(7)$}{Lg}-structures}
If $r^2:=a_0^2+a_1^2+a_2^2+a_3^2$ is the square of the distance function from the zero section and  $c$ is a positive constant, then, the $\Spin(7)$-structures defined by Bryant and Salamon are: 
\begin{equation} \label{Phi_c}
\begin{aligned}
\Phi_c:=&16(c+r^2)^{-4/5}\xi_0\wedge\xi_1 \wedge\xi_2\wedge\xi_3+25 (c+r^2)^{6/5}b_0\wedge b_1\wedge b_2\wedge b_3\\
&+20(c+ r^2)^{1/5} (A_1\wedge\Omega_1+A_2\wedge\Omega_2+A_3\wedge\Omega_3),
\end{aligned}
\end{equation}
where $A_i:=\xi_0\wedge \xi_i -\xi_j\wedge \xi_k$. As usual, $(i,j,k)$ is a cyclic permutation of $(1,2,3)$.

The metric induced by $\Phi_{c}$ is
\begin{align}\label{g_c}
g_c:=4(c+ r^2)^{-2/5} (\xi_0^2+\xi_1^2+\xi_2^2+\xi_3^2)+5(c+r^2)^{3/5} (b_0^2+b_1^2+b_2^2+b_3^2),
\end{align}
while the induced volume element is
\begin{align}\label{Vol_c}
\vol_c:=\left(20\right)^2 (c+r^2)^{2/5}(\xi_0\wedge\xi_1\wedge\xi_2\wedge\xi_3\wedge b_0\wedge b_1\wedge b_2 \wedge b_3).
\end{align}

Setting $c=0$ and $M_0:=\dS_{\!-}(S^4)\setminus S^4\cong\R^{+}\times S^7$, we obtain a $\Spin(7)$ cone $(M_0,\Phi_0)$, i.e. $M_0$ with the metric induced by the $\Spin(7)$-structure $\Phi_0$ is a Riemannian cone.

\begin{theorem}[Bryant--Salamon {\cite[p. 847]{BS89}}]
The $\Spin(7)$-structure $\Phi_c$ is torsion-free for all $c\geq0$. Moreover, these manifolds have full holonomy $\Spin(7)$.
\end{theorem}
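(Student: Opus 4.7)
My plan is to prove the two assertions separately: first that $d\Phi_c=0$ for all $c\geq 0$, and second that the holonomy is full $\Spin(7)$. Since $\Phi_c$ is self-dual with respect to $g_c$, the torsion-free condition is equivalent to $d\Phi_c=0$, so I only need to verify closure.

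For the closure, the strategy is a direct calculation using the structure equations \eqref{comprho}, \eqref{compVert1Forms}, \eqref{dxi_i}, and the formula for $d\rho_i$ recorded just before \eqref{dxi_i}. I would first observe that $r^2=\sum a_i^2$ has differential $dr^2=2\sum_i a_i\,\xi_i$, because the $\rho$-terms in the definition of $\xi_i$ cancel pairwise once contracted against $a_i$. Consequently $d(c+r^2)^\alpha=2\alpha(c+r^2)^{\alpha-1}\sum_i a_i\,\xi_i$. Applied to the first term of \eqref{Phi_c}, this contribution vanishes when wedged with $\xi_0\wedge\xi_1\wedge\xi_2\wedge\xi_3$, and a direct expansion using \eqref{dxi_i} shows that $d(\xi_0\wedge\xi_1\wedge\xi_2\wedge\xi_3)$ is itself a combination of $a_i\,\xi_j\wedge\xi_k\wedge\xi_l\wedge\Omega_m$ terms. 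For the horizontal term $b_0\wedge b_1\wedge b_2\wedge b_3$, the derivative of the coefficient pairs with this top horizontal form and produces terms $a_i\,\xi_i\wedge b_0\wedge\cdots\wedge b_3$. The remaining mixed terms $A_i\wedge\Omega_i$ are the most intricate: here one uses $d\Omega_i=-2\epsilon_{ijk}\rho_j\wedge\Omega_k$ to exchange derivatives of $\Omega_i$ into $\rho$-terms, and then uses \eqref{dxi_i} to compute $dA_i$, collecting the resulting $\rho_j\wedge\xi$-pieces against those coming from the derivative of the coefficient. After substitution, the exponents $-4/5$, $6/5$, $1/5$ are precisely those that balance the system: the coefficients $16$, $25$, $20$ in \eqref{Phi_c} appear exactly so that the three contributions cancel, with the factor $1/2$ in \eqref{dxi_i} (coming from the normalization $k=1$) matching the powers of $(c+r^2)$. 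This bookkeeping is tedious but mechanical; the main obstacle is checking that all $\rho$-free terms of the form $a_i\,\xi_j\wedge\xi_k\wedge\xi_l\wedge\Omega_m$ and $a_i\,\xi_i\wedge b_0\wedge\cdots\wedge b_3$ cancel across the three summands, which pins down the exponents and coefficients in the ansatz.

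Having established $d\Phi_c=0$, the holonomy principle gives $\Hol(g_c)\subseteq\Spin(7)$. To show equality, I would argue as follows. The underlying manifold $\dS_{-}(S^4)$ is simply connected (it is a rank-4 vector bundle over a simply connected base), so $\Hol(g_c)$ is connected. Berger's classification for irreducible non-symmetric holonomy of an $8$-manifold forces $\Hol(g_c)$ to be one of $\SO(8)$, $\mathrm{U}(4)$, $\SU(4)$, $\Sp(2)$, $\Sp(2)\Sp(1)$, or $\Spin(7)$; among these, only $\SU(4)$, $\Sp(2)$, and $\Spin(7)$ are compatible with containment in $\Spin(7)$ (via Ricci-flatness and parallel forms). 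A reduction to $\SU(4)$ or $\Sp(2)$ would yield additional parallel forms beyond $\Phi_c$. Ruling these out can be done by a local computation: the explicit form of $g_c$ in \eqref{g_c} is cohomogeneity one under the $\Sp(2)\times\Sp(1)$ action described in the introduction, and one checks that the space of parallel $2$-forms (equivalently, the fixed vectors of the holonomy representation on $\Lambda^2$) is trivial, so no orthogonal almost complex or hyperkähler structure can exist. One must also exclude reducibility: if $(M,g_c)$ split locally, the de Rham decomposition would force a parallel $1$-form, contradicting the positivity of the scalar curvature of the base $S^4$ entering the curvature of $g_c$ through the structure equations. The hardest step here is the explicit exclusion of a reduction to $\SU(4)$; in practice, the cleanest route is to verify that the Bryant–Salamon curvature tensor has a nonzero component in the $\Spin(7)$-irreducible piece of $\Lambda^2\otimes\Lambda^2_{21}$ that would have to vanish for $\SU(4)$-holonomy, which can be checked at a single convenient point, e.g.\ on the zero section.
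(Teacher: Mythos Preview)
The paper does not prove this theorem; it is quoted with a citation to Bryant--Salamon \cite{BS89} and no argument is supplied. So there is nothing in the paper to compare your proposal against directly.

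Your outline for $d\Phi_c=0$ follows the route of the original Bryant--Salamon computation and is correct in spirit; what remains is exactly the bookkeeping you flag as mechanical.

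There is, however, a genuine error in the holonomy argument. You assert that a local de Rham splitting of $(M,g_c)$ ``would force a parallel $1$-form''. This is false: a product of two hyperk\"ahler $4$-manifolds has holonomy $\Sp(1)\times\Sp(1)\subset\SU(4)\subset\Spin(7)$, is reducible, and carries no parallel $1$-form. The appeal to the scalar curvature of the base $S^4$ does not repair this. The correct statement is that every proper closed connected subgroup of $\Spin(7)$ which can occur as the holonomy of a Ricci-flat $8$-manifold fixes a nonzero element of $\Lambda^1(\R^8)\oplus\Lambda^2(\R^8)$. Hence it suffices to show that $g_c$ admits no nonzero parallel $1$-form and no nonzero parallel $2$-form; your remark about parallel $2$-forms is the right ingredient, but the parallel $1$-form case (ruling out $\G2$ and splittings with a flat line factor) must be established on its own, not deduced from reducibility. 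Alternatively, and this is closer to what Bryant--Salamon actually do, one computes the curvature at a single point and checks that the curvature operators, viewed in $\mathfrak{so}(8)$, span $\mathfrak{spin}(7)$; the Ambrose--Singer theorem then yields $\Hol(g_c)=\Spin(7)$ directly and bypasses the case analysis.
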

 
 It is well-known that the Bryant--Salamon $\Spin(7)$ manifolds we have just described are asymptotically conical (see for instance \cite[p.184]{Sal89}), hence, we state here the main results concerning their asymptotic geometry.
\begin{theorem}
For every $c\geq0$, $(M,\Phi_c)$ is an asymptotically conical Riemannian manifold with rate $\lambda=-10/3$ and asymptotic cone $(M_0,\Phi_0)$. 
\end{theorem}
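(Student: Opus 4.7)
The plan is to work directly from the explicit formulas \eqref{g_c} for $g_c$ and $g_0$. Since $M_0 = \dS_{\!-}(S^4)\setminus S^4$ sits naturally inside $M$ as the complement of the zero section, I would take the diffeomorphism $\Psi$ in the AC definition to be the identity on $\{r > R\}$ for $R$ sufficiently large. To relate the fibre coordinate $r$ to the cone radial distance $s$, I move along a radial line in a single fibre: setting $b_i = 0$ reduces $\xi_i$ to $\hat a_i\, dr$ with $\hat a_i := a_i/r$, so $g_0$ becomes $4r^{-4/5}\, dr^2$. Integrating $ds = 2 r^{-2/5}\, dr$ gives $s = \tfrac{10}{3}\, r^{3/5}$, equivalently $r = (3s/10)^{5/3}$, so that the link $\Sigma = \{r=1\}$ is a compact $7$-manifold with induced metric.

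For $c = 0$ there is nothing to prove, so fix $c > 0$. The key calculation is
\[
h := \Psi^{\ast} g_c - g_0 = 4\bigl[(c+r^2)^{-2/5} - r^{-4/5}\bigr]\sum_{i=0}^{3} \xi_i^2 + 5\bigl[(c+r^2)^{3/5} - r^{6/5}\bigr] \sum_{i=0}^{3} b_i^2.
\]
Taylor-expanding the bracketed coefficients in $c/r^2$ as $r \to \infty$ gives
\[
(c+r^2)^{-2/5} - r^{-4/5} = -\tfrac{2c}{5}\, r^{-14/5} + O(r^{-24/5}), \qquad (c+r^2)^{3/5} - r^{6/5} = \tfrac{3c}{5}\, r^{-4/5} + O(r^{-14/5}).
\]
The coframe $\{\xi_i, b_j\}$ is $g_0$-orthogonal with inverse-metric coefficients $\tfrac14 r^{4/5}$ and $\tfrac15 r^{-6/5}$ respectively, so the $\xi_i^2$-block contributes $O(r^{8/5} \cdot r^{-28/5}) = O(r^{-4})$ to $\av{h}_{g_0}^2$ and the $b_j^2$-block contributes $O(r^{-12/5} \cdot r^{-8/5}) = O(r^{-4})$. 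Hence $\av{h}_{g_0} = O(r^{-2}) = O(s^{-10/3})$, which is the bound required for $j = 0$.

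For the higher-derivative estimates I would proceed by induction. Differentiating a bracketed coefficient in $r$ gains a factor of $r^{-1}$, and since $\partial_s = \tfrac12 r^{2/5}\, \partial_r$ this is precisely one factor of $s^{-1}$ in cone coordinates; derivatives along $\Sigma$ are computed via \eqref{comprho} and \eqref{dxi_i}, which express $d\xi_i$ and $d\rho_i$ as bounded linear combinations of the same frame elements. The main obstacle is that the required norm uses the Levi-Civita connection $\nabla^{g_0}$ rather than a flat coordinate derivative, so one must control the extra Christoffel terms of the cone metric. This is handled by the standard observation that, on a Riemannian cone, $g_0$ scales homogeneously of degree $2$ under $s \mapsto \lambda s$, so its Christoffel symbols scale precisely as $s^{-1}$; consequently each application of $\nabla^{g_0}$ costs exactly one factor of $s^{-1}$. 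Iterating gives $\av{\nabla^j h}_{g_0} = O(s^{-10/3 - j})$ for all $j \in \N$, and the asymptotic cone is $(M_0, \Phi_0)$ by construction.
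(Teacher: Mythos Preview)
The paper does not actually prove this theorem: it is stated as a well-known fact with a reference to \cite{Sal89}, so there is no proof in the paper to compare against. Your argument is therefore being judged on its own merits.

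Your $j=0$ computation is correct. The identification $s=\tfrac{10}{3}r^{3/5}$ is right, the Taylor expansions of the two bracketed coefficients are right, and the pointwise norm calculation using the diagonal form of $g_0$ in the coframe $\{\xi_i,b_j\}$ correctly yields $\lvert h\rvert_{g_0}=O(r^{-2})=O(s^{-10/3})$. The choice $\Psi=\mathrm{id}$ on $\{r>R\}$ is legitimate because the paper has already asserted that $(M_0,g_0)$ is a Riemannian cone, so the identity map from the cone to the complement of a compact set in $M$ is exactly the required diffeomorphism.

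For $j\geq 1$ your sketch contains the right ingredients but is thin in one place. The scaling observation that the Christoffel symbols of a cone metric are $O(s^{-1})$ in a cone-adapted orthonormal coframe is standard and correct, and your conversion $\partial_s=\tfrac12 r^{2/5}\partial_r$ together with $r^{-3/5}\sim s^{-1}$ shows that radial differentiation of the scalar coefficients behaves as claimed. What is not quite spelled out is that the coframe $\{\xi_i,b_j\}$ itself is \emph{not} the cone-adapted orthonormal coframe: the $g_0$-orthonormal coframe is $\{2r^{-2/5}\xi_i,\ \sqrt{5}\,r^{3/5}b_j\}$, and one must check that covariant differentiation of these rescaled $1$-forms (equivalently, that the structure equations \eqref{comprho} and \eqref{dxi_i} after rescaling) produces only $O(s^{-1})$ terms. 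This is true---the $\xi_i$ and $b_j$ are pulled back from the unit-sphere bundle in $\dS_{\!-}(S^4)$ and the extra $r$-powers scale correctly---but a complete write-up should make this explicit rather than leaving it to ``bounded linear combinations of the same frame elements.'' With that caveat, the argument is sound.
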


\subsection{Automorphism Group}
A natural subset of the diffeomorphism group of a $\Spin(7)$-manifold is the automorphism group, i.e. the subgroup that preserves the $\Spin(7)$-structure. Clearly, the automorphism group is contained in the isometry group with respect to the induced metric.

In the setting we are considering, Bryant and Salamon noticed that the diffeomorphisms given by the $\Sp(2)\times\Sp(1)$-action described as follows are actually in the automorphism group \cite[Theorem 2]{BS89}. Consider $\SO(5)$ acting on $S^4$ in the standard way. This induces an action on the frame bundle of $S^4$ via the differential, which easily lifts to a $\Spin(5)\cong\Sp(2)$ action on $P_{\Spin(4)}$. If we combine it with the standard quaternionic left-multiplication by unit vectors on $\H$, we have defined an $\Sp(2)\times\Sp(1)$ action on $P_{\Spin(4)}\times\H$. As it commutes with $\mu_{-}$, it passes to the quotient $\dS_{\!-}(S^4)$.

By Lie group theory \cite[Appendix B]{Kaw18}, we know that the $3$-dimensional connected closed subgroups of $\Sp(2)$ are the lift of one of the following subgroups of $\SO(5)$:
\begin{align*}
&\SO(3)\times \Id_2, \hspace{30pt} \Sp(1)\times\Id_1,\\
&\SO(3)\hspace{5pt} \textup{acting irreducibly on } \R^5,
\end{align*}
where $\Sp(1)\times\Id_1$ denotes both the subgroup acting on $\H\times\R$ by left multiplication and by right multiplication of the quaternionic conjugate. Observe that they are all diffeomorphic to $\SU(2)$. In particular, the family of $3$-dimensional subgroups that do not sit diagonally in $\Sp(2)\times\Sp(1)$ consists of 
$$G\times 1_{\Sp(1)}\subset\Sp(2)\times\Sp(1)$$
 and 
 $$1_{\Sp(2)}\times\Sp(1)\subset \Sp(2)\times \Sp(1),$$
  where $G$ is one of the lifts above. These are going to be the subgroups of the automorphism group that we will take into consideration.
  
\section{The Cayley fibration invariant under the \texorpdfstring{$\Sp(1)$}{Lg} action on the fibre} \label{Section fibre action}

Let $M:=\dS_{\!-}(S^4)$ and $M_0:=\R^{+}\times S^7$ endowed with the torsion-free $\Spin(7)$-structures $\Phi_c$ constructed by Bryant and Salamon and described in Section \ref{Section BS}.

Observe that $(M,\Phi_c)$ and $(M_0,\Phi_0)$ admit a trivial Cayley Fibration. Indeed, it is straightforward to see that the natural projection to $S^4$ realizes both spaces as honest Cayley fibrations with smooth fibres diffeomorphic to $\R^4$ and $\R^4\setminus\{0\}$, respectively. In both cases, the parametrizing family is clearly $S^4$

The fibres are asymptotically conical to the cone of link $S^3$ and metric: 
\begin{align*}
ds^2+\frac{9}{25} g_{S^3},
\end{align*}
where $s= r^{3/5} 10/3$ and $g_{S^3}$ is the standard unit round metric. 

Since $\Id_{\Sp(2)}\times\Sp(1)$ acts trivially on the basis, and as $\Sp(1)$ on the fibres of $\dS_{\!-}(S^4)$ identified with $\H$, it is clear that the trivial fibration is invariant under $\Id_{\Sp(2)}\times\Sp(1)$. 

\begin{remark}
We compute the associated multi-moment map, $\nu_c$, in the sense of Madsen and Swann \cite{MS12,MS13}. This is:
\begin{align*}
\nu_c:=\frac{20}{3}(r^2-5c)(c+r^2)^{1/5}+\frac{100}{3}c^{6/5},
\end{align*}
where we subtracted $c^{6/5}100/3$ so that the range of the multi-moment map is $[0,\infty)$. Observe that the level sets of $\nu_c$ coincide with the level sets of the distance function from the zero section.
\end{remark}
\begin{remark}
As in \cite[Section 4.4]{KLo21}, this fibration becomes the trivial Cayley fibration of $\R^8=\R^4\oplus\R^4$ when we blow-up at any point of the zero section.
\end{remark}

\section{The Cayley fibration invariant under the lift of the \texorpdfstring{$\SO(3)\times\Id_2$}{Lg} action on \texorpdfstring{$S^4$}{Lg}} \label{Section SO(3)xId_2}

Let $M:=\dS_{\!-}(S^4)$ and $M_0:=\R^{+}\times S^7$ be endowed with the torsion-free $\Spin(7)$-structures $\Phi_c$ constructed by Bryant and Salamon that we described in Section \ref{Section BS}. On each $\Spin(7)$ manifold, we construct the Cayley Fibration which is invariant under the lift to $M$ (or $M_0$) of the standard $\SO(3)\times\Id_2$ action on $S^4\subset \R^3\oplus\R^2$. 

\subsection{The choice of coframe on \texorpdfstring{$S^4$}{Lg}}\label{Subsection Frame S^4 SO(3)xId_2}
As in \cite{KLo21}, we choose an adapted orthonormal coframe on $S^4$ which is compatible with the symmetries we will impose. Since the action coincides, when restricted to $S^4$, with the one used by Karigiannis and Lotay on $\Lambda^2_{-} (T^{\ast}S^4)$ \cite[Section 5]{KLo21}, it is natural to employ the same coframe, which we now recall.

We split $\R^5$ into the direct sum of a $3$-dimensional vector subspace $P\cong\R^3$ and its orthogonal complement $P^{\perp}\cong\R^2$. As $S^4$ is the unit sphere in $\R^5$, we can write, with respect to this splitting:
\begin{align*}
S^4=\left\{(\textbf{x},\textbf{y})\in P\oplus P^{\perp} : \av{\textbf{x}}^2+\av{\textbf{y}}^2=1\right\}.
\end{align*}
Now, for all $(\textbf{x},\textbf{y})\in S^4$ there exists a unique $\alpha\in[0,\pi/2]$, some $\textbf{u}\in S^2\subset P$ and some $\textbf{v}\in S^1\subset P^{\perp}$ such that: 
\begin{align*}
\textbf{x}=\cos \alpha \textbf{u}, \hspace{20pt} \textbf{y}=\sin \alpha \textbf{v}.
\end{align*}
Observe that $\textbf{u}$ and $\textbf{v}$ are uniquely determined when $\alpha\in(0,\pi/2)$, while, when $\alpha=0,\pi/2$, $\textbf{v}$ can be any unit vector in $P^{\perp}$ ($\textbf{y}=0$) and $\textbf{u}$ can be any unit vector in $P$ ($\textbf{x}=0$), respectively. Hence, we are writing $S^4$ as the disjoint union of an $S^2$, corresponding to $\alpha=0$, of an $S^1$, corresponding to $\alpha=\pi/2$, and of $S^2\times S^1\times (0,\pi/2)$. 

If we put spherical coordinates on $S^2$ and polar coordinates on $S^1$, then, we can write
\begin{align*}
\textbf{u}=(\cos\theta,\sin\theta\cos\phi,\sin\theta\sin\phi),
\end{align*}
and
\begin{align*}
\textbf{v}=(\cos\beta,\sin\beta),
\end{align*}
where $\theta\in[0,\pi]$, $\phi\in[0,2\pi)$ and $\beta\in[0,2\pi)$. As usual, $\phi$ is not unique when $\theta=0,\pi$.

It follows that, if we take out the points where $\theta=0,\pi$ from $S^2\times S^1\times (0,\pi/2)$, we have constructed a coordinate patch $U$ parametrized by $(\alpha,\beta,\theta,\phi)$ on $S^4$. Explicitly, $U$ is $S^4$ minus two totally geodesic $S^2$:
\begin{align*}
S^2_{y_1,y_2=0} =\left\{ (\textbf{x},\textbf{0})\in P\oplus P^{\perp} : \av{x}^2=1\right\},
\end{align*}
corresponding to $\alpha=0$, and 
\begin{align*}
S^2_{x_2,x_3=0}=\left\{(\cos\alpha,0,0,\sin\alpha\cos\beta,\sin\alpha\sin\beta)\in P\oplus P^{\perp} : \alpha\in(0,\pi)\right\},
\end{align*}
corresponding to $\theta=0$ and $\theta=\pi$. Observe, that the $S^1$ corresponding to $\alpha=\pi/2$ is a totally geodesic equator in $S^2_{x_2,x_3=0}$.

A straightforward computation shows that the coordinate frame $\{\partial_{\alpha},\partial_{\beta},\partial_{\theta},\partial_{\phi}\}$ is orthogonal and can be easily normalized obtaining: 
\begin{align*}
f_0:=\partial_{\alpha}, \hspace{15pt}f_1:=\frac{\partial_{\beta}}{\sin\alpha},\hspace{15pt}f_2:=\frac{\partial_{\theta}}{\cos\alpha},\hspace{15pt}f_3:=\frac{\partial_{\phi}}{\cos\alpha\sin\theta}.
\end{align*}
The dual orthonormal coframe is given by: 
\begin{align}\label{frame S^4 SO(3)xId_2}
b_0:=d\alpha,\hspace{15pt} b_1:=\sin\alpha d\beta,\hspace{15pt} b_2:=\cos\alpha d\theta, \hspace{15pt} b_3:=\cos\alpha\sin\theta d\phi.
\end{align}
Observe that $\{b_0,b_1,b_2,b_3\}$ is positively oriented with respect to the outward pointing normal of $S^4$, hence, the volume form is: 
\begin{align*}
\vol_{S^4}=\sin\alpha\cos^2\alpha\sin\theta d\alpha\wedge d\beta\wedge d\theta\wedge d\phi.
\end{align*}
\subsection{The horizontal and the vertical space} \label{Subsection horizontal and vertical space SO(3)xId_2}
As in \cite[Subsection 5.2]{KLo21}, we use (\ref{comprho}) to compute the $\rho_i$'s in the coordinate frame we have just defined. Indeed, (\ref{frame S^4 SO(3)xId_2}) implies that:
\begin{equation}\label{Omega_i on S^4 SO(3)xId_2}
\begin{aligned}
\Omega_1&=\sin\alpha d\alpha\wedge d\beta-\cos^2\alpha \sin\theta d\theta\wedge d\phi,\\
\Omega_2&=\cos\alpha d\alpha\wedge d\theta-\sin\alpha\cos\alpha \sin\theta d\phi\wedge d\beta,\\
\Omega_3&=\cos\alpha\sin\theta d\alpha\wedge d\phi-\sin\alpha \cos\alpha d\beta\wedge d\theta;
\end{aligned}
\end{equation}
hence, we deduce that: 
\begin{equation*}
\begin{aligned}
d\Omega_1&=2\sin\alpha\cos\alpha\sin\theta d\alpha\wedge  d\theta\wedge d\phi,\\
d\Omega_2&=(\sin^2\alpha-\cos^2\alpha)\sin\theta d\alpha\wedge d\phi\wedge d\beta-\sin\alpha\cos\alpha \cos\theta d\theta\wedge d\phi\wedge d\beta,\\
d\Omega_3&=\cos\alpha\cos\theta d\theta\wedge d\alpha\wedge d\phi+(\sin^2\alpha- \cos^2\alpha) d\alpha\wedge d\beta\wedge d\theta.
\end{aligned}
\end{equation*}
We conclude that in these coordinates we have: 
\begin{align*}
2\rho_1=-\cos\alpha d\beta+\cos\theta d\phi; \hspace{10pt} 2 \rho_2=\sin\alpha d\theta; \hspace{10pt} 2\rho_3=\sin\alpha\sin\theta d\phi.
\end{align*}

Now that we have computed the connection forms, we immediately see from (\ref{compVert1Forms}) that the vertical one forms are: 

\begin{equation}\label{Vertical1FormsinFrame SO(3)xId_2}
\begin{aligned}
\xi_0&=d a_0+a_1\left(-\frac{\cos\alpha}{2}d\beta+\frac{\cos\theta}{2}d\phi\right)+a_2 \frac{\sin\alpha}{2} d\theta+a_3\frac{\sin\alpha\sin\theta}{2}d\phi,\\
\xi_1&=d a_1-a_0\left(-\frac{\cos\alpha}{2}d\beta+\frac{\cos\theta}{2}d\phi\right)-a_2\frac{\sin\alpha\sin\theta}{2}d\phi+a_3\frac{\sin\alpha}{2}d\theta,\\
\xi_2&=da_2-a_0\frac{\sin\alpha}{2}d\theta+a_1\frac{\sin\alpha\sin\theta}{2}d\phi-a_3\left(-\frac{\cos\alpha}{2}d\beta+\frac{\cos\theta}{2}d\phi\right),\\
\xi_3&=da_3-a_0\frac{\sin\alpha\sin\theta}{2}d\phi-a_1\frac{\sin\alpha}{2}d\theta+a_2\left(-\frac{\cos\alpha}{2}d\beta+\frac{\cos\theta}{2}d\phi\right).
\end{aligned}
\end{equation}

\subsection{The \texorpdfstring{$\SU(2)$}{Lg} action} \label{Subsection Spin(3) action SO(3)xId_2}
Given the splitting of subsection \ref{Subsection Frame S^4 SO(3)xId_2}, $\R^5= P\oplus P^{\perp}$, since $P\cong\R^3$ and $P^{\perp}\cong \R^2$, we can consider $\SO(3)$ acting in the usual way on $P$ and trivially on $P^{\perp}$. In other words, we see $\SO(3)\cong\SO(P)\times\Id_{P^{\perp}}\subset \SO(P\oplus P^{\perp})\cong \SO(5)$. Obviously, this is also an action on $S^4$.

By taking the differential, $\SO(3)$ acts on the frame bundle $P_{\SO(4)}$ of $S^4$. The theory of covering spaces implies that this action lifts to a $\Spin(3)\cong\SU(2)$ action on the spin structure $P_{\Spin(4)}$ of $S^4$. In particular, the following diagram is commutative: 
\begin{equation}\label{CDgloballifts SO(3)xId_2}
\begin{tikzcd}
& & P_{\Spin(4)} \arrow[dd, "\tilde{\pi}"]\\
& & \\
\Spin(3)\times P_{\Spin(4)} \arrow[uurr, dotted] \arrow[r,"\tilde{\pi}^3_0\times\tilde{\pi}" '] & \SO(3)\times P_{\SO(4)} \arrow[r] & P_{\SO(4)}
\end{tikzcd}.
\end{equation}
Finally, if $\Spin(3)$ acts trivially on $\H$, we can combine the two $\Spin(3)$ actions to obtain one on $P_{\Spin(4)}\times \H$, which descends to the quotient $P_{\Spin(4)}\times_{\mu_{-}} \H=\dS_{\!-}(S^4)$.

\begin{remark}
Recall that $TS^4=P_{\SO(4)}\times_{\boldsymbol{\cdot}}\R^4$, where $\boldsymbol{\cdot}$ is the standard representation of $\SO(4)$ on $\R^4$. Let $G$ be a subgroup of $\SO(5)$ which acts on $P_{\SO(4)}\times_{\boldsymbol{\cdot}}\R^4$ via the differential on the first term and trivially on the second. It is straightforward to verify that this action passes to the quotient and that it coincides with the differential on $TS^4$.
\end{remark}

Now, we describe the geometry of this $\Spin(3)$ action on $\dS_{\!-}(S^4)$. %Since $\tilde{\pi}$ is fibre-preserving and since Figure \ref{CDgloballifts SO(3)xId_2} represents a commutative diagram, the $\Spin(3)$ action on $P_{\Spin(4)}$, projected to $S^4$, coincides with the $\Spin(3)$ lift of the $\SO(3)$ action on $S^4\subset P\oplus P^{\perp}$, as described above. 
Since $\tilde{\pi}$ is fibre-preserving and (\ref{CDgloballifts SO(3)xId_2}) represents a commutative diagram, we observe that, fixed a point $p=(\mathbf{x},\mathbf{y})\in S^4\subset P\oplus P^{\perp}$, the subgroup of $\Spin(3)$ that preserves the fibre of $P_{\Spin(4)}$ over $p$ is the lift of the subgroup of $\SO(3)$ that fixes the fibre of $P_{\SO(4)}$ over $p$.

We first assume $\alpha\neq\pi/2$. The subgroup of $\SO(3)$ that preserves the fibres of $P_{\SO(4)}$ rotates the tangent space of $S^2\subset P$ and fixes the other vectors tangent to $S^4$. Explicitly, if $\{e_i\}_{i=0}^3$ is the oriented orthonormal frame of Subsection \ref{Subsection Frame S^4 SO(3)xId_2} (or an analogous frame when $\alpha=0$, $\theta=0,\pi$), the transformation matrix under the action is:
\begin{align} \label{Action PSO(4) aneqpi/2 SO(3)xId_2}
 h_\gamma:=
\left[
\begin{array}{c|cc}
\Id_2 & & \\
\hline
 & \cos\gamma &-\sin\gamma \\
 & \sin\gamma & \cos\gamma
\end{array}
\right]
\in\SO(4),
\end{align}
\begin{comment} alternative form of the matrix
\begin{bmatrix}
1 & 0 & & \\
0 & 1 & & \\
& & \cos\gamma &-\sin\gamma \\
& & \sin\gamma & \cos\gamma \\
\end{bmatrix}
\end{comment}
for some $\gamma\in [0,2\pi)$.
\begin{claim}\label{Claim1 SO(3)xId_2}
For all $\gamma\in[0,4\pi)$, under the isomorphism $\Spin(4)\cong\Sp(1)\times\Sp(1)$, we have:
\begin{align*}
\tilde{\pi}^4_0 (\tilde{h}_{\gamma})=h_\gamma, 
\end{align*}
where $\tilde{h}_\gamma= (\cos(\gamma/2)+i\sin(\gamma/2),\cos(\gamma/2)+i\sin(\gamma/2))$.
\end{claim}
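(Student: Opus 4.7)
The plan is to use the explicit form of the double cover $\tilde{\pi}^4_0\colon\Sp(1)\times\Sp(1)\to\SO(4)$, which, under the identification $\Spin(4)\cong\Sp(1)\times\Sp(1)$ compatible with the spin representation fixed in Subsection 2.1, sends $(p_+,p_-)$ to the linear map $v\mapsto p_+ v\overline{p_-}$ on $\R^4\cong\H$. This convention is forced by our choice of $\mu_{-}(p_-)(v)=v\overline{p_-}$ for the negative spin representation. Identifying the oriented orthonormal frame $(e_0,e_1,e_2,e_3)$ with $(1,i,j,k)$, the statement of the claim reduces to verifying that $v\mapsto qv\overline{q}$, with $q=\cos(\gamma/2)+i\sin(\gamma/2)$, acts as the matrix $h_\gamma$ on this basis.

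First I would handle the fixed directions. Since $q$ lies in the commutative subalgebra of $\H$ spanned by $1$ and $i$, it commutes with both $1$ and $i$, so $q\cdot 1\cdot\overline{q}=q\overline{q}=1$ and $q\cdot i\cdot\overline{q}=i\, q\overline{q}=i$. This reproduces the identity block of $h_\gamma$ on the first two basis vectors, corresponding to the fact that the rotation preserves the copy of $P\cong\R^2$ pointwise in the sense of the splitting at the fibre level.

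Next I would expand $qj\overline{q}$ and $qk\overline{q}$ by direct quaternionic multiplication, using the rules $ij=k$, $ji=-k$, $ik=-j$, $ki=j$, and applying the double-angle identities $\cos^2(\gamma/2)-\sin^2(\gamma/2)=\cos\gamma$ and $2\sin(\gamma/2)\cos(\gamma/2)=\sin\gamma$. A short calculation yields $qj\overline{q}=\cos\gamma\, j+\sin\gamma\, k$ and $qk\overline{q}=-\sin\gamma\, j+\cos\gamma\, k$, which match the lower $2\times 2$ rotation block of $h_\gamma$ exactly, completing the verification.

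There is no genuine obstacle here; the argument is a few lines of quaternion algebra. The only point requiring care is consistency with the sign and orientation conventions fixed in Subsection 2.1, so that the vector representation of $\Sp(1)\times\Sp(1)$ on $\H$ really is $(p_+,p_-)\cdot v=p_+ v\overline{p_-}$ and not one of the several conventionally possible variants; once this is pinned down, the computation is forced.
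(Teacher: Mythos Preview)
Your proof is correct and follows exactly the approach of the paper: the paper simply records that $\tilde{\pi}^4_0\left((l,r)\right)\cdot a = l a \overline{r}$ and declares the rest ``a straightforward computation,'' whereas you actually carry out that computation on the quaternionic basis $(1,i,j,k)$. There is nothing to add; your level of detail is greater than the paper's but the argument is the same.
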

\begin{proof}
It is well-known that, in this context, $\tilde{\pi}^4_0 \left((l,r)\right) \cdot a=la\overline{r}$ for all $(l,r)\in\Sp(1)\times\Sp(1)$ and all $a\in\H\cong\R^4$. The claim follows from a straightforward computation.
\end{proof}
Using once again the commutativity of (\ref{CDgloballifts SO(3)xId_2}) and Claim \ref{Claim1 SO(3)xId_2}, we deduce that the action in the trivialization of $\dS_{\!-}(S^4)$ induced by $\{e_i\}_{i=0}^3$ is as follows:

\[\begin{tikzcd}[row sep=1ex]
U \times \H \arrow[r, "\cong"]& \left( U\times\Spin(4) \right)\times_{\mu_{-}} \H\arrow[r]& \left( U\times\Spin(4) \right)\times_{\mu_{-}} \H\arrow[r, "\cong"] & U\times \H 
\\
\left(p,a\right) \arrow[r,mapsto]& \left[(p,1_{\Spin(4)}),a \right] \arrow[r,mapsto]& \big[(p,\tilde{h}_{\gamma}),a \big] \arrow[r,mapsto]& \left(p,a\hat{h}_{\gamma}\right)
\end{tikzcd},
\]
where $\hat{h}_{\gamma}:=\cos(\gamma/2)-i\sin(\gamma/2)$ and where $a\in \H$. If we write both $\R^2$ factors of $\H\cong\R^2\oplus\R^2$ in polar coordinates, i.e.,
\[
a=s \cos(\gamma_{-}/2)+i s\sin(\gamma_{-}/2)+j t \cos(\gamma_{+}/2)+k t\sin(\gamma_{+}/2),
\]
for $s,t\in [0,\infty)$ and $\gamma_{\pm}\in [0,4\pi)$, we observe that 
\[
a\hat{h}_{\gamma}=s \cos\left( (\gamma_{-}-\gamma)/2\right)+i s\sin\left( (\gamma_{-}-\gamma)/2\right)+j t \cos\left((\gamma_{+}+\gamma)/2\right)+k t\sin((\gamma_{+}+\gamma)/2).
\]
Geometrically, this is a rotation of angle $-\gamma/2$ on the first $\R^2$ and of angle $\gamma/2$ on the second.

Now, we assume $\alpha=\pi/2$. In this case, the whole $\Spin(3)$ fixes the fibre of $\dS_{\!-}(S^4)$. 
\begin{claim}
$\Spin(3)$ acts on the fibre of $\dS_{\!-}(S^4)$ as $\Sp(1)$ acts on $\H$ via right multiplication of the quaternionic conjugate.
\end{claim}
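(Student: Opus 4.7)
The plan is to carry out the analogous analysis to Claim \ref{Claim1 SO(3)xId_2} at $\alpha=\pi/2$, where the full $\SO(P)\times\Id_{P^{\perp}}\cong\SO(3)$ stabilizes $p=(\mathbf{0},\mathbf{v})\in S^4$ and hence acts on the entire fibre of $\dS_{\!-}(S^4)$ over $p$. The task reduces to identifying which copy of $\Sp(1)\subset\Sp(1)\times\Sp(1)\cong\Spin(4)$ the spin lift is, and then reading off the resulting action on $\H$ from the formula $\mu_{-}(p_{-})(v)=v\bar{p}_{-}$.

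First, I would choose at $p$ a positively-oriented orthonormal frame $\{e_0,e_1,e_2,e_3\}$ of $T_p S^4$ adapted to the symmetry. The natural choice is to take $e_0$ to be the unit tangent vector to $S^1\subset P^{\perp}$ at $\mathbf{v}$ and $\{e_1,e_2,e_3\}$ any positively-oriented orthonormal basis of $P$; a direct sign check against the convention $\vol_{S^4}=\mathbf{v}\iprod\vol_{\R^5}$ confirms this frame is compatible with the fixed orientation of $S^4$. In this frame, each $A\in\SO(P)$ fixes $e_0$ and rotates $\{e_1,e_2,e_3\}$ via the standard $\SO(3)$-representation on $P\cong\R^3$, so the transformation matrix is
\begin{align*}
h_A=\begin{pmatrix} 1 & 0 \\ 0 & A \end{pmatrix}\in\SO(4).
\end{align*}

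Second, I would identify the lift to $\Spin(4)=\Sp(1)\times\Sp(1)$. Using $\tilde{\pi}^4_0((l,r))\cdot a=la\bar{r}$ for $a\in\H$ (with $e_0\leftrightarrow 1$ and $e_1,e_2,e_3\leftrightarrow i,j,k$), the condition that $h_A$ fix $e_0$ becomes $l\bar{r}=1$, i.e.\ $r=l$. Hence the preimage in $\Spin(4)$ is the diagonal subgroup $\Delta:=\{(l,l):l\in\Sp(1)\}$, a copy of $\Sp(1)$ covering $\SO(3)$ via the adjoint action $l\mapsto(a\mapsto l a l^{-1})$ on $\mathrm{Im}\,\H$, and so is identified with $\Spin(3)$.

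Finally, applying $\mu_{-}(p_{-})(v)=v\bar{p}_{-}$ to $(l,l)\in\Delta$ yields the induced action $v\mapsto v\bar{l}$ on the fibre $\H$ of $\dS_{\!-}(S^4)$ at $p$, which is exactly right multiplication by the quaternionic conjugate of $l\in\Sp(1)$, as claimed. The only delicate point is aligning the fixed direction of the $\SO(3)$-action with the real axis of $\H$ through a correctly oriented frame; once this is done the computation is immediate from the same ingredients as Claim \ref{Claim1 SO(3)xId_2}, and any other positively-oriented frame simply conjugates the formula by the corresponding change of basis without affecting the intrinsic action.
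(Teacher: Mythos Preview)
Your proof is correct and follows essentially the same approach as the paper: choose a frame at $p$ with $e_0$ along the fixed direction in $P^{\perp}$ and $e_1,e_2,e_3$ spanning $P$, observe that the isotropy $\SO(3)$ acts by the block-diagonal matrix $\mathrm{diag}(1,A)$, identify its lift in $\Spin(4)\cong\Sp(1)\times\Sp(1)$ as the diagonal $\{(l,l)\}$, and read off the fibre action from $\mu_{-}$. The only difference is cosmetic: you pin down the diagonal via the stabilizer condition $l\bar r=1$, whereas the paper verifies the identity $\tilde{\pi}^4_0((g,g))=\mathrm{diag}(1,\tilde{\pi}^3_0(g))$ by direct computation on $\H=\R\oplus\mathrm{Im}\,\H$.
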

\begin{proof}
Consider an orthonormal frame such that, at $p=(\underline{0},\cos\beta,\sin\beta)$, has the form:
\begin{align*}
e_0=-\sin\beta\partial_{3}+\cos\beta\partial_{4}; \hspace{15pt}e_1=\partial_0;\hspace{15pt}e_2=\partial_1;\hspace{15pt}e_3=\partial_2,
\end{align*}
where $\partial_i$ are the coordinate vectors of $\R^5\cong P\oplus P^{\perp}$. Observe that the $\SO(3)$ action fixes $e_0$ and acts on $e_1, e_2, e_3$ via matrix multiplication. In particular, given $G\in \SO(3)$, the transformation matrix of the frame at $p$ is:
\[
\left[
\begin{array}{c|c}
1 &  \\
\hline
 & G
\end{array}
\right].
\]
Moreover, for all ${g}\in \Sp(1)\cong \Spin(3)$ and $({g},{g})\in\Sp(1)\times\Sp(1)\cong\Spin(4)$, then
\[
\tilde{\pi}^4_0 ( ({g},{g}))=\left[
\begin{array}{c|c}
1 &  \\
\hline
 & \tilde{\pi}_0^3({g})
\end{array}
\right],
\]
where we recall that $\tilde{\pi}^3_0 (l)\cdot x=l x\overline{l}$ for all $l\in\Sp(1)$ and $x\in \Im\H\cong\R^3$. Indeed, the left-hand side reads: 
\[
\tilde{\pi}_0^4 (({g},{g}))\cdot a={g} a \overline{{g}}={g} \Re a \overline{{g}}+{g} \Im a \overline{{g}}=\Re a+{g} \Im a \overline{{g}},
\]
while the right-hand side is:
\[
\left[
\begin{array}{c|c}
1 &  \\
\hline
 & \tilde{\pi}_0^3({g})
\end{array}\right] a=\left(\begin{array}{c} 
\Re a \\
{g} \Im a\overline{{g}}

\end{array}\right).
\]
We conclude the proof through the commutativity of (\ref{CDgloballifts SO(3)xId_2}).
\end{proof}
We put all these observations in a lemma.
\begin{lemma} \label{Geometry of Spin(3) orbits SO(3)xId_2}
The orbits of the $\SU(2)\cong\Spin(3)$ action on $\dS_{\!-}(S^4)$ are given in Table \ref{Table Orbits SO(3)xId_2}.
\end{lemma}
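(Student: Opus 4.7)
The plan is to decompose the orbit computation using the vector-bundle projection $\pi_{S^4}:\dS_-(S^4)\to S^4$. Because the $\SU(2)$-action is the lift of $\SO(3)$ from the base via the double cover in diagram (\ref{CDgloballifts SO(3)xId_2}), every $\SU(2)$-orbit projects onto an $\SO(3)$-orbit in $S^4$, and the total orbit is a bundle over the base orbit whose typical fibre is the orbit of the fibre-point under the stabilizer of the base-point. So I will first enumerate the base orbits, and then glue in the fibre orbits, using Claims 1--2 to describe the stabilizer-action on each fibre.

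For the base orbits, since $\SO(3)=\SO(P)\times\Id_{P^\perp}$ preserves $|\mathbf{x}|^2$ and $|\mathbf{y}|^2$ separately, the coordinates of Subsection \ref{Subsection Frame S^4 SO(3)xId_2} immediately give three cases: (a) $\alpha\in(0,\pi/2)$, where the orbit through $(\cos\alpha\,\mathbf{u},\sin\alpha\,\mathbf{v})$ is the sphere $S^2$ obtained by rotating $\mathbf{u}\in S^2\subset P$; (b) $\alpha=0$, where $\mathbf{y}=0$ and the orbit is the single equatorial $S^2_P$; (c) $\alpha=\pi/2$, where $\mathbf{x}=0$ and each point of $S^1_{P^\perp}$ is individually fixed. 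In cases (a) and (b) the stabilizer in $\SO(3)$ is a copy of $\SO(2)$, and by Claim \ref{Claim1 SO(3)xId_2} its lift in $\SU(2)$ is the circle $\{\tilde{h}_\gamma\}\subset\Sp(1)\times\Sp(1)$ acting on $\H\cong\R^2\oplus\R^2$ by the opposite rotations $(\gamma_-,\gamma_+)\mapsto(\gamma_--\gamma,\gamma_++\gamma)$; in case (c) the whole $\SU(2)$ stabilizes the base-point and, by the second claim, acts on $\H$ via $a\mapsto a\bar{g}$.

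With this set-up the orbit-stabilizer theorem does the rest. For $\alpha\in(0,\pi/2)$ (and analogously for $\alpha=0$), writing the fibre point in polar coordinates $(s,\gamma_-,t,\gamma_+)$, I read off three sub-cases: if $s=t=0$ the circle acts trivially on the fibre and the total orbit is just the base $S^2$; if exactly one of $s,t$ vanishes, the circle acts freely on a single $\R^2$-factor, giving a circle orbit in the fibre and a 3-dimensional total orbit; if $s,t>0$ the coupled rotation is again free, giving a circle orbit in the fibre and a 3-dimensional total orbit. In both 3-dimensional subcases the full $\SU(2)$ acts with trivial stabilizer, so the orbit is $\SU(2)\cong S^3$. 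For $\alpha=\pi/2$, the orbit is simply a round $S^3\subset\H$ for $a\neq 0$ and a point for $a=0$. Listing these conclusions gives precisely the table referenced by the lemma.

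The main obstacle I anticipate is the bookkeeping between the base and fibre stabilizers: one has to track the $\mathbb{Z}_2$ in the double cover $\Spin(3)\to\SO(3)$ carefully when claiming the total stabilizer is trivial in the 3-dimensional cases, since the element $-\mathrm{Id}\in\Spin(3)$ lies in the base stabilizer for every base-point. What saves us is that $-\mathrm{Id}$ acts on the fibre $\H$ by $a\mapsto -a$ (respectively by the sign flip coming from $\tilde{h}_{2\pi}$), which is nontrivial whenever $a\neq 0$, ruling out the possibility of lens-space quotients and forcing the 3-dimensional orbit to be $S^3$ in every relevant case.
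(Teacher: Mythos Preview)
Your proposal is correct and follows essentially the same approach as the paper: the lemma is stated there as a summary of the preceding discussion in Subsection~\ref{Subsection Spin(3) action SO(3)xId_2}, which computes the base orbits, identifies the base-stabilizer action on the fibre via Claims~\ref{Claim1 SO(3)xId_2} and the subsequent claim, and then reads off the total orbits. Your explicit check that $-\mathrm{Id}\in\Spin(3)$ acts nontrivially on nonzero fibre points (ruling out lens-space quotients) is a nice addition that the paper leaves implicit in the formula $a\mapsto a\hat{h}_\gamma$.
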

\begin{table}[h]
\[
\begin{array}{|c|c|c|c|}
\hline
\alpha & (s,t) & \textup{Orbit}\\ \hline
 \neq\frac{\pi}{2} & =(0,0) & S^2 \\ \hline
 \neq\frac{\pi}{2} & \neq(0,0) & S^3 \\ \hline
 =\frac{\pi}{2}& =(0,0) & \textup{Point}\\ \hline
 =\frac{\pi}{2}& \neq(0,0) & S^3 \\ \hline
\end{array}
\] 
\caption{$\Spin(3)$ Orbits}\label{Table Orbits SO(3)xId_2}
\end{table}

\subsection{\texorpdfstring{$\Spin(3)$}{Lg} adapted coordinates}\label{Spin(3) adapted coordinates}
\label{Subsection Spin(3) adapted coordinates SO(3)xId_2}
The description of the $\SU(2)$ action that we carried out in Subsection \ref{Subsection Spin(3) action SO(3)xId_2} suggests the following reparametrization of the linear coordinates $(a_0, a_1, a_2,a_3)$  on the fibres of $\dS_{\!-}(S^4)$:
%\begin{equation}
%\begin{aligned}
%a_0&=s\cos\left(\frac{\delta-\gamma}{2}\right),  &&a_2=t\cos\left(\frac{\delta+\gamma}{2}\right),\\
%a_1&=s\sin\left(\frac{\delta-\gamma}{2}\right),  &&a_3=t\sin\left(\frac{\delta+\gamma}{2}\right),
%\end{aligned}
%\end{equation}
\begin{align}\label{a_is SO(3)xId_2}
a_0=s\cos\left(\frac{\delta-\gamma}{2}\right); \hspace{10pt} a_1=s\sin\left(\frac{\delta-\gamma}{2}\right); \hspace{10pt} a_2=t\cos\left(\frac{\delta+\gamma}{2}\right); \hspace{10pt} a_3=t\sin\left(\frac{\delta+\gamma}{2}\right),
\end{align}
where $s,t\in [0,\infty)$, $\gamma\in[0,4\pi)$ and $\delta\in[0,2\pi)$. This is a well-defined coordinate system when $s$ and $t$ are strictly positive; we will assume this from now on. Geometrically, $\gamma$ represents the $\SU(2)$ action, while $\delta$ can be either seen as the phase in the orbit of the action when $(a_0,a_1)=(s,0)$ or as twice the common angle in $[0,\pi)$ that the suitable point in the orbit makes with $(s,0)$ and $(t,0)$. These interpretations can be recovered by putting $\gamma=\delta$ and $\gamma=0$, respectively.  

Similarly to \cite{KLo21}, we introduce the standard left-invariant coframe on $\SU(2)$ of coordinates $\gamma,\theta,\phi$ defined on the same intervals as above: 
\begin{align}\label{Coframe orbit SO(3)xId_2}
\sigma_1=d\gamma+\cos\theta d\phi; \hspace{10pt} \sigma_2=\cos\gamma d\theta+\sin\gamma\sin\theta d\phi; \hspace{10pt} \sigma_3=\sin\gamma d\theta-\cos\gamma\sin\theta d\phi.
\end{align}
Observe that: 
\begin{align}\label{sigma_2wedgsigma_3 SO(3)xId_2}
\sigma_2\wedge\sigma_3=-\sin\theta d\theta\wedge d\phi.
\end{align}
Our choice of parametrization of $\dS_{\!-}(S^4)$ implies that (\ref{Coframe orbit SO(3)xId_2}) is a coframe on the $3$-dimensional orbits of the $\SU(2)$ action.

So far, we have constructed a coordinate system $\alpha,\beta,\theta,\phi, s,t, \delta,\gamma$ defining a chart $\mathcal{U}$ of $\dS_{\!-}(S^4)$ and a coframe $\{\sigma_1, \sigma_2,\sigma_3, d\alpha,d\beta,ds,dt,d\delta\}$ on that chart. These coordinates and coframe are such that $\gamma,\theta,\phi$ parametrize the orbits of the $\SU(2)$ action and $\{\sigma_1,\sigma_2,\sigma_3\}$ forms a coframe on these orbits. Let $\{\partial_1, \partial_2,\partial_3, \partial_\alpha,\partial_\beta,\partial_s,\partial_t,\partial_\delta\}$ be the relative dual frame.

\subsection{The \texorpdfstring{$\Spin(7)$}{Lg} geometry in the adapted coordinates} In this subsection, we write the Cayley form $\Phi_c$, as in (\ref{Phi_c}), and the relative metric $g_c$, as in (\ref{g_c}), with respect to the $\SU(2)$ adapted coordinates defined in Subsection \ref{Spin(3) adapted coordinates}.
\begin{lemma} \label{Omegais adapted coordinates SO(3)xId_2}
The horizontal 2-forms $\Omega_1$, $\Omega_2,$ $\Omega_3$, in the adapted frame defined in Subsection \ref{Spin(3) adapted coordinates}, satisfy: 
\begin{align*}
\Omega_1&=\sin\alpha d\alpha\wedge d\beta +\cos^2\alpha \sigma_2\wedge \sigma_3
\end{align*}
and
\begin{align*}
\cos\gamma\Omega_2+\sin\gamma \Omega_3&=\cos\alpha(d\alpha\wedge \sigma_2-\sin\alpha d\beta\wedge \sigma_3),\\
-\sin\gamma\Omega_2+\cos\gamma \Omega_3&=\cos\alpha(-d\alpha\wedge\sigma_3-\sin\alpha  d\beta\wedge \sigma_2).
\end{align*}
\end{lemma}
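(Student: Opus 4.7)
The lemma is a direct coordinate computation, combining the formulas (\ref{Omega_i on S^4 SO(3)xId_2}) for $\Omega_i$ in the $(\alpha,\beta,\theta,\phi)$-coordinates with the definition (\ref{Coframe orbit SO(3)xId_2}) of $\sigma_2,\sigma_3$. The plan is to treat $\Omega_1$ separately, since it already splits cleanly along the orbit/base directions, and then handle the pair $(\Omega_2,\Omega_3)$ by observing that the combinations on the left-hand side are precisely the $\SO(2)$-rotation that inverts the analogous rotation used to define $\sigma_2,\sigma_3$ from $d\theta$ and $\sin\theta\, d\phi$.

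First I would substitute the identity (\ref{sigma_2wedgsigma_3 SO(3)xId_2}), namely $\sigma_2\wedge\sigma_3 = -\sin\theta\, d\theta\wedge d\phi$, directly into the expression $\Omega_1 = \sin\alpha\, d\alpha\wedge d\beta - \cos^2\alpha\sin\theta\, d\theta\wedge d\phi$ of (\ref{Omega_i on S^4 SO(3)xId_2}); this yields the first claim in one line. Next, from (\ref{Coframe orbit SO(3)xId_2}) one verifies by a $2\times 2$ orthogonal inversion that
\[
d\theta = \cos\gamma\,\sigma_2 + \sin\gamma\,\sigma_3,\qquad \sin\theta\, d\phi = \sin\gamma\,\sigma_2 - \cos\gamma\,\sigma_3.
\]
I would then plug these into the $\Omega_i$ from (\ref{Omega_i on S^4 SO(3)xId_2}) and compute $\cos\gamma\,\Omega_2+\sin\gamma\,\Omega_3$ and $-\sin\gamma\,\Omega_2+\cos\gamma\,\Omega_3$. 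Grouping by $d\alpha\wedge(\cdot)$ and $d\beta\wedge(\cdot)$ terms, the angle $\gamma$ disappears exactly when the $2\times 2$ rotation matrix acting on $\Omega_2,\Omega_3$ cancels the rotation relating $(d\theta,\sin\theta\, d\phi)$ with $(\sigma_2,\sigma_3)$, producing the stated right-hand sides.

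The only potential pitfall is sign bookkeeping: in particular the minus sign in $d\phi\wedge d\beta = -d\beta\wedge d\phi$ in (\ref{Omega_i on S^4 SO(3)xId_2}), and the asymmetry between the two identities above (one has $+\sin\gamma$, the other $-\cos\gamma$). I would reorder all wedge products so that $d\alpha$ or $d\beta$ appears first, and then collect coefficients of $\sigma_2$ and $\sigma_3$ on each side. There is no genuine difficulty here; the lemma serves to rewrite the horizontal $2$-forms in a form adapted to the $\SU(2)$-orbits, which will be needed when analyzing the Cayley condition in later subsections.
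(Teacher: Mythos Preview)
Your proposal is correct and follows essentially the same approach as the paper: the paper's proof is a one-line reference to equations (\ref{Omega_i on S^4 SO(3)xId_2}), (\ref{Coframe orbit SO(3)xId_2}) and (\ref{sigma_2wedgsigma_3 SO(3)xId_2}), and you have simply spelled out the substitution and the orthogonal inversion explicitly. Your sign bookkeeping is accurate, and noting that the matrix relating $(\sigma_2,\sigma_3)$ to $(d\theta,\sin\theta\,d\phi)$ is an involution (a reflection, not a rotation) is exactly what makes the cancellation work.
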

\begin{proof}
The equations follow from (\ref{Omega_i on S^4 SO(3)xId_2}), (\ref{Coframe orbit SO(3)xId_2}) and (\ref{sigma_2wedgsigma_3 SO(3)xId_2}).
%The first equation follows from (\ref{Omega_i on S^4 SO(3)xId_2}) and (\ref{sigma_2wedgsigma_3 SO(3)xId_2}). The others follow from (\ref{Omega_i on S^4 SO(3)xId_2}) and (\ref{Coframe orbit SO(3)xId_2}).
\end{proof}
\begin{lemma} \label{A_i long form SO(3)xId_2}
The vertical 2-forms $A_1$, $A_2,$ $A_3$, in the adapted frame defined in Subsection \ref{Spin(3) adapted coordinates}, have the form: 
%A_1=&\left(ds+\frac{t\sin\alpha}{2}\sigma_2\right)\wedge \left(\frac{s}{2}d\delta+\frac{s\cos\alpha}{2}d\beta-\frac{s}{2}\sigma_1+\frac{t\sin\alpha}{2}\sigma_3\right)\\
%&- \left(dt-\frac{s\sin\alpha}{2}\sigma_2\right)\wedge \left(\frac{t}{2}d\delta-\frac{t\cos\alpha}{2}d\beta+\frac{t}{2}\sigma_1+\frac{s\sin\alpha}{2}\sigma_3\right)
\begin{equation*}
\begin{aligned}
A_1=&\frac{1}{2}(sds-tdt)\wedge d\delta+\frac{\cos\alpha}{2}(sds+tdt)\wedge d\beta-\frac{1}{2}(sds+tdt)\wedge\sigma_1\\
&+\frac{\sin\alpha}{2}(tds-sdt)\wedge\sigma_3+(s^2+t^2)\frac{\sin^2\alpha}{4}\sigma_2\wedge\sigma_3+\frac{st\sin\alpha}{2}\sigma_2\wedge d\delta,
\end{aligned}
\end{equation*}

\begin{equation} \label{A_2 and A_3 long form SO(3)xId_2}
\begin{aligned}
A_2=&\cos\gamma ds\wedge dt-\frac{t}{2}\sin\gamma ds\wedge (d\gamma+d\delta)-\frac{s}{2}\sin\gamma dt\wedge (d\delta-d\gamma)-\frac{st}{2}\cos\gamma d\gamma\wedge d\delta\\
&-(s^2+t^2)\frac{\sin\alpha\cos\alpha}{4}\sin\theta d\beta\wedge d\phi +\sin\gamma(sdt-tds)\wedge\left(-\frac{\cos\alpha}{2}d\beta+\frac{\cos\theta}{2}d\phi\right)\\
&+st\cos\gamma d\delta\wedge \left(-\frac{\cos\alpha}{2}d\beta+\frac{\cos\theta}{2}d\phi\right)+\frac{\sin\alpha}{2}d\theta\wedge(tdt+sds)\\
&+\frac{t^2\sin\alpha\sin\theta}{4}(d\gamma+d\delta)\wedge d\phi+\frac{s^2 \sin\alpha\sin\theta}{4}d\phi\wedge (d\delta-d\gamma);\\
A_3=&\sin\gamma ds\wedge dt+\frac{t}{2}\cos\gamma ds\wedge (d\gamma+d\delta)+\frac{s}{2}\cos\gamma dt\wedge (d\delta-d\gamma)+\frac{st}{2}\sin\gamma d\delta\wedge d\gamma\\
&+(s^2+t^2)\frac{\sin\alpha}{4}(\cos\alpha d\beta\wedge d\theta+\cos\theta d\theta\wedge d\phi) -\cos\gamma(sdt-tds)\wedge\left(-\frac{\cos\alpha}{2}d\beta+\frac{\cos\theta}{2}d\phi\right)\\
&+st\sin\gamma d\delta\wedge \left(-\frac{\cos\alpha}{2}d\beta+\frac{\cos\theta}{2}d\phi\right)+\frac{\sin\alpha\sin\theta}{2}d\phi\wedge(tdt+sds)\\
&+\frac{t^2\sin\alpha}{4}d\theta\wedge(d\gamma+d\delta)+\frac{s^2 \sin\alpha}{4} (d\delta-d\gamma)\wedge d\theta.
\end{aligned}
\end{equation}
\end{lemma}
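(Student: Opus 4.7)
The proof is a direct, if lengthy, computation. The plan is to substitute the adapted parametrization (\ref{a_is SO(3)xId_2}) and the coframe identities (\ref{Coframe orbit SO(3)xId_2}) into the explicit formulas (\ref{Vertical1FormsinFrame SO(3)xId_2}) for $\xi_0,\xi_1,\xi_2,\xi_3$, obtain the $\xi_\mu$ in the new coframe, and then form the wedge combinations $A_i=\xi_0\wedge\xi_i-\xi_j\wedge\xi_k$.

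First, I would differentiate the relations $a_0=s\cos\tfrac{\delta-\gamma}{2}$, $a_1=s\sin\tfrac{\delta-\gamma}{2}$, $a_2=t\cos\tfrac{\delta+\gamma}{2}$, $a_3=t\sin\tfrac{\delta+\gamma}{2}$ to express each $da_\mu$ in terms of $ds,dt,d\delta,d\gamma$. Then, using $2\rho_1=-\cos\alpha\, d\beta+\cos\theta\, d\phi$, $2\rho_2=\sin\alpha\, d\theta$, $2\rho_3=\sin\alpha\sin\theta\, d\phi$ computed in Subsection \ref{Subsection horizontal and vertical space SO(3)xId_2}, I would collect the terms in $\xi_\mu$ and rewrite them via the substitution $d\gamma=\sigma_1-\cos\theta\, d\phi$, $d\theta=\cos\gamma\,\sigma_2+\sin\gamma\,\sigma_3$ and $\sin\theta\, d\phi=\sin\gamma\,\sigma_2-\cos\gamma\,\sigma_3$ obtained by inverting (\ref{Coframe orbit SO(3)xId_2}). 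The natural organizing principle is that the pairs $(\xi_0,\xi_1)$ and $(\xi_2,\xi_3)$ are rotated by the $\SU(2)$ action (Subsection \ref{Subsection Spin(3) action SO(3)xId_2}) with opposite angles, so the combinations $\cos\tfrac{\delta-\gamma}{2}\,\xi_0+\sin\tfrac{\delta-\gamma}{2}\,\xi_1$ (and similar) give the cleanest intermediate expressions.

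Next, I would form $A_1=\xi_0\wedge\xi_1-\xi_2\wedge\xi_3$, $A_2=\xi_0\wedge\xi_2-\xi_3\wedge\xi_1$, $A_3=\xi_0\wedge\xi_3-\xi_1\wedge\xi_2$. For $A_1$, the wedge products $\xi_0\wedge\xi_1$ and $\xi_2\wedge\xi_3$ are individually invariant under the fibre rotation by $\mp\gamma/2$ in the two $\R^2$ factors, so all $\gamma$-dependent trigonometric factors cancel, consistent with the stated formula carrying no bare $\cos\gamma,\sin\gamma$ coefficients. For $A_2,A_3$, the wedge products mix the two $\R^2$ factors, so one sees $\cos\gamma,\sin\gamma$ arise precisely through products like $\cos\tfrac{\delta-\gamma}{2}\cos\tfrac{\delta+\gamma}{2}\pm\sin\tfrac{\delta-\gamma}{2}\sin\tfrac{\delta+\gamma}{2}$ simplifying via angle-addition to $\cos\gamma$ or $\cos\delta$ (and similarly for $\sin$). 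As a sanity check, one can verify that $\cos\gamma\, A_2+\sin\gamma\, A_3$ and $-\sin\gamma\, A_2+\cos\gamma\, A_3$ are $\SU(2)$-invariant, mirroring the structure in Lemma \ref{Omegais adapted coordinates SO(3)xId_2} for the $\Omega_i$.

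The main obstacle is purely bookkeeping: each $\xi_\mu$ contains eight base-coordinate differentials together with $ds,dt,d\delta,d\gamma$, so each wedge product $\xi_a\wedge\xi_b$ yields dozens of terms. The key to keeping the calculation manageable is to (i) first collect the \emph{purely vertical} part (terms in $ds,dt,d\gamma,d\delta$ only), (ii) then the \emph{mixed} part (one vertical and one horizontal differential), and (iii) finally the \emph{purely horizontal} part, exploiting at each step the angle-sum identities above and the relations $\sigma_2\wedge\sigma_3=-\sin\theta\, d\theta\wedge d\phi$ together with the expressions $d\beta\wedge\sigma_2,\, d\beta\wedge\sigma_3$ etc. obtained from (\ref{Coframe orbit SO(3)xId_2}). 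Comparing the resulting expressions term by term with the stated formulas completes the proof.
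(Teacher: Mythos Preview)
Your approach is correct and is exactly the one the paper takes: compute the $da_\mu$ from the parametrization (\ref{a_is SO(3)xId_2}) and substitute into (\ref{Vertical1FormsinFrame SO(3)xId_2}) to obtain the $\xi_\mu$, then form the wedge combinations $A_i$. The paper's proof is a single sentence to this effect, so your additional organizational remarks (splitting into vertical/mixed/horizontal parts, exploiting the $\SU(2)$-equivariance to predict the $\gamma$-dependence) are helpful elaborations rather than a different method.
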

\begin{proof}
Computing the exterior derivatives of the $a_i$'s in the coordinates (\ref{a_is SO(3)xId_2}), we can reduce our statement to a long computation based on (\ref{Vertical1FormsinFrame SO(3)xId_2}).
\end{proof}

\begin{corollary}\label{Ai's in spin(3) coframe SO(3)xId_2}
The vertical 2-forms $A_1$, $A_2,$ $A_3$, in the adapted frame defined in Subsection \ref{Spin(3) adapted coordinates}, satisfy: 
\begin{equation}\label{A_1 nondiag SO(3)xId_2}
\begin{aligned}
A_1=&\left(ds+\frac{t\sin\alpha}{2}\sigma_2\right)\wedge \left(\frac{s}{2}d\delta+\frac{s\cos\alpha}{2}d\beta-\frac{s}{2}\sigma_1+\frac{t\sin\alpha}{2}\sigma_3\right)\\
&- \left(dt-\frac{s\sin\alpha}{2}\sigma_2\right)\wedge \left(\frac{t}{2}d\delta-\frac{t\cos\alpha}{2}d\beta+\frac{t}{2}\sigma_1+\frac{s\sin\alpha}{2}\sigma_3\right)
\end{aligned}
\end{equation}
and 
\begin{align}\label{cosA_2+sinA_3 nondiag SO(3)xId_2}
\cos\gamma A_2+\sin\gamma A_3&=\!\!\!\! \begin{aligned}[t]&\left(ds+\frac{t\sin\alpha}{2}\sigma_2\right)\wedge  \left(dt-\frac{s\sin\alpha}{2}\sigma_2\right)\\
&+\!\!\left(\frac{s}{2}d\delta+\frac{s\cos\alpha}{2}d\beta-\frac{s}{2}\sigma_1+\frac{t\sin\alpha}{2}\sigma_3\right)\!\!\wedge\!\! \left(\frac{t}{2}d\delta-\frac{t\cos\alpha}{2}d\beta+\frac{t}{2}\sigma_1+\frac{s\sin\alpha}{2}\sigma_3\right)\!\!;\end{aligned}\\ \label{cosA_3-sinA_2 nondiag SO(3)xId_2}
\cos\gamma A_3-\sin\gamma A_2 &=\!\!\!\! \begin{aligned}[t] &\left(ds+\frac{t\sin\alpha}{2}\sigma_2\right)\wedge \left(\frac{t}{2}d\delta-\frac{t\cos\alpha}{2}d\beta+\frac{t}{2}\sigma_1+\frac{s\sin\alpha}{2}\sigma_3\right)\\
& +\left(dt-\frac{s\sin\alpha}{2}\sigma_2\right)\wedge \left(\frac{s}{2}d\delta+\frac{s\cos\alpha}{2}d\beta-\frac{s}{2}\sigma_1+\frac{t\sin\alpha}{2}\sigma_3\right).
\end{aligned}
\end{align}
\end{corollary}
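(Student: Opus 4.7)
The plan is to verify the three identities by direct expansion of the factored right-hand sides and term-by-term comparison with the expressions obtained in Lemma \ref{A_i long form SO(3)xId_2}. The corollary contains no new information; it simply repackages Lemma \ref{A_i long form SO(3)xId_2} into a factored form, and the four 1-forms appearing as factors on the right-hand side will later serve to diagonalize the Bryant--Salamon metric on $\mathcal{U}$, which is presumably the motivation for this reformulation.

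For (\ref{A_1 nondiag SO(3)xId_2}) I would first expand the two wedge products on the right in the basis $\{d\alpha, d\beta, ds, dt, d\delta, \sigma_1, \sigma_2, \sigma_3\}$. The ``scalar'' pieces (those not containing a $\sigma_2$ factor) immediately reproduce the $(sds-tdt)\wedge d\delta/2$, $(\cos\alpha/2)(sds+tdt)\wedge d\beta$, $-(sds+tdt)\wedge\sigma_1/2$ and $(\sin\alpha/2)(tds-sdt)\wedge\sigma_3$ contributions to $A_1$ in Lemma \ref{A_i long form SO(3)xId_2}. The ``mixed'' terms $(t\sin\alpha/2)\sigma_2\wedge(\cdots)$ and $-(s\sin\alpha/2)\sigma_2\wedge(\cdots)$ then produce the $(st\sin\alpha/2)\sigma_2\wedge d\delta$ term and the coefficient $(s^2+t^2)\sin^2\alpha/4$ of $\sigma_2\wedge\sigma_3$, while the putative $\sigma_2\wedge d\beta$ and $\sigma_2\wedge\sigma_1$ contributions cancel exactly between the two products (which is the crucial consistency check). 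To match with Lemma \ref{A_i long form SO(3)xId_2} I would convert its expression to the $\sigma_i$-basis using (\ref{sigma_2wedgsigma_3 SO(3)xId_2}) and (\ref{Coframe orbit SO(3)xId_2}).

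For (\ref{cosA_2+sinA_3 nondiag SO(3)xId_2}) and (\ref{cosA_3-sinA_2 nondiag SO(3)xId_2}) I would first form the linear combinations $\cos\gamma\, A_2 \pm \sin\gamma\, A_3$ starting from (\ref{A_2 and A_3 long form SO(3)xId_2}). Repeated use of $\cos^2\gamma + \sin^2\gamma = 1$ collapses terms such as $ds\wedge dt$, $(sdt-tds)\wedge(-\cos\alpha\, d\beta/2 + \cos\theta\, d\phi/2)$ and $st\, d\delta\wedge(-\cos\alpha\, d\beta/2 + \cos\theta\, d\phi/2)$, erasing all $\gamma$-dependence from those pieces, while cross-products give zero; meanwhile the $d\gamma$-containing pieces recombine via $\sigma_1 = d\gamma + \cos\theta\, d\phi$ into $\sigma_1$-contributions, and the $d\theta$, $\sin\theta\, d\phi$ pieces reassemble via (\ref{Coframe orbit SO(3)xId_2}) into $\sigma_2$ and $\sigma_3$ contributions. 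The outcome should match, coefficient by coefficient, the expansion of the corresponding factored right-hand side.

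The sole obstacle is pure book-keeping, since each right-hand side contains many wedge-products and Lemma \ref{A_i long form SO(3)xId_2} is already lengthy. To keep the verification tractable I would first convert Lemma \ref{A_i long form SO(3)xId_2} entirely to the common basis $\{d\alpha, d\beta, ds, dt, d\delta, \sigma_1, \sigma_2, \sigma_3\}$ via (\ref{Coframe orbit SO(3)xId_2}) and (\ref{sigma_2wedgsigma_3 SO(3)xId_2}), so that both sides are written in the same coframe, and then read off and compare coefficients of each basis 2-form; the identities then reduce to a finite list of scalar equalities which hold by inspection.
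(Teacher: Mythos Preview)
Your approach is essentially the same as the paper's: for $A_1$ the paper simply notes that the expression in Lemma \ref{A_i long form SO(3)xId_2} is exactly the expansion of (\ref{A_1 nondiag SO(3)xId_2}), and for the two linear combinations it first computes $\cos\gamma A_2+\sin\gamma A_3$ and $\cos\gamma A_3-\sin\gamma A_2$ from (\ref{A_2 and A_3 long form SO(3)xId_2}) in the $\sigma_i$-basis (displaying the resulting intermediate expressions) and then observes that these coincide with the expansions of (\ref{cosA_2+sinA_3 nondiag SO(3)xId_2}) and (\ref{cosA_3-sinA_2 nondiag SO(3)xId_2}). One small remark: $A_1$ in Lemma \ref{A_i long form SO(3)xId_2} is already written in the $\sigma_i$-basis, so no conversion is needed there; only $A_2$ and $A_3$ require the substitutions (\ref{Coframe orbit SO(3)xId_2}) and (\ref{sigma_2wedgsigma_3 SO(3)xId_2}).
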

\begin{proof}
The first equation in Lemma \ref{A_i long form SO(3)xId_2} is exactly the development of (\ref{A_1 nondiag SO(3)xId_2}). 

A straightforward computation involving (\ref{A_2 and A_3 long form SO(3)xId_2}) gives: 
\begin{align*}
\cos\gamma A_2+\sin\gamma A_3&=\begin{aligned}[t] &ds\wedge dt+\frac{st}{2}d\delta\wedge \sigma_1-\frac{st}{2}\cos\alpha d\delta\wedge d\beta+(s^2+t^2)\frac{\sin\alpha\cos\alpha}{4}d\beta\wedge \sigma_3\\
&+\frac{\sin\alpha}{2}\sigma_2\wedge (tdt+sds)+\frac{(t^2-s^2)\sin\alpha}{4}\sigma_3\wedge d\delta-\frac{(t^2+s^2)\sin\alpha}{4}\sigma_1\wedge\sigma_3;
\end{aligned}\\
\cos\gamma A_3-\sin\gamma A_2&=\begin{aligned}[t] & \frac{1}{2}(tds-sdt)\wedge \sigma_1+\frac{1}{2}(tds+sdt)\wedge d\delta+(s^2+t^2)\frac{\sin\alpha\cos\alpha}{4}d\beta\wedge\sigma_2\\
&+\frac{\cos\alpha}{2}(sdt-tds)\wedge d\beta+\frac{\sin\alpha}{2}(tdt+sds)\wedge \sigma_3
-\frac{(s^2+t^2)\sin\alpha}{4}\sigma_1\wedge \sigma_2\\
&+\frac{(t^2-s^2)\sin\alpha}{4}\sigma_2\wedge d\delta;
\end{aligned}
\end{align*}
which coincide with the development of (\ref{cosA_2+sinA_3 nondiag SO(3)xId_2}) and (\ref{cosA_3-sinA_2 nondiag SO(3)xId_2}), respectively.
\end{proof}

\begin{remark}\label{Phi_c bad coframe SO(3)xId_2}
Using the identities:
\begin{equation}\label{vol=Omega/A_i^2}
\begin{aligned}
b_0\wedge b_1\wedge b_2\wedge b_3&=-\frac{1}{2}\Omega_1\wedge \Omega_1,\\
\xi_0\wedge \xi_1\wedge \xi_2\wedge \xi_3&=-\frac{1}{2}A_1\wedge A_1
\end{aligned}
\end{equation}
and
\begin{equation}\label{sum_i A_iOmega_i}
\begin{aligned}
\sum_{i=1}^3 A_i\wedge \Omega_i= &A_1\wedge \Omega_1+ (\cos\gamma\Omega_2+\sin\gamma\Omega_3)\wedge  (\cos\gamma A_2+\sin\gamma A_3)\\
&+(-\sin\gamma\Omega_2+\cos\gamma\Omega_3)\wedge  (-\sin\gamma A_2+\cos\gamma A_3),
\end{aligned}
\end{equation}
one could easily find $\Phi_c$ in the adapted frame of Subsection \ref{Spin(3) adapted coordinates}. It is clear from Corollary \ref{Ai's in spin(3) coframe SO(3)xId_2} that it is not going to be in a nice form. 
\end{remark}
\begin{lemma}\label{g_c invariant SO(3)xId_2}
Given $c\geq0$, the Riemannian metric $g_c$, in the adapted frame of Subsection \ref{Spin(3) adapted coordinates}, takes the form: 
\begin{equation*} 
\begin{aligned}
g_c=&5(c+r^2)^{3/5}\left(d\alpha^2 +\sin^2\alpha d\beta^2 +\cos^2\alpha(\sigma^2_2+\sigma_3^2)\right) \\
&+4(c+r^2)^{-2/5} \bigg(ds^2+dt^2+\frac{r^2 \cos^2\alpha}{4}d\beta^2+\frac{r^2}{4}\sigma_1^2-\frac{r^2\cos\alpha}{2}d\beta\sigma_1+\frac{r^2\sin^2\alpha}{4}(\sigma_2^2+\sigma_3^2)\\
&+\frac{(t^2-s^2)}{2}d\delta\sigma_1+(st\sin\alpha)d\delta\sigma_3+\frac{r^2}{4} d\delta^2+\sin\alpha(tds-sdt)\sigma_2-\frac{(t^2-s^2)\cos\alpha}{2}d\delta d\beta\!\bigg),
\end{aligned}
\end{equation*}
where $r^2=s^2+t^2$.
\end{lemma}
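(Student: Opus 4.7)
The plan is to split the verification along the decomposition (\ref{g_c}) into horizontal and vertical pieces, dispose of the horizontal summand directly, and handle the vertical summand by means of the factorizations in Corollary \ref{Ai's in spin(3) coframe SO(3)xId_2} rather than by a brute substitution of (\ref{a_is SO(3)xId_2}) into (\ref{Vertical1FormsinFrame SO(3)xId_2}).

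For the horizontal summand, formula (\ref{frame S^4 SO(3)xId_2}) immediately yields $\sum_{i=0}^{3}b_i^2 = d\alpha^2 + \sin^2\alpha\, d\beta^2 + \cos^2\alpha(d\theta^2+\sin^2\theta\, d\phi^2)$, and squaring (\ref{Coframe orbit SO(3)xId_2}) gives the identity $\sigma_2^2+\sigma_3^2 = d\theta^2 + \sin^2\theta\, d\phi^2$. After multiplying by $5(c+r^2)^{3/5}$ we recover the first line of the claimed expression for $g_c$.

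For the vertical summand, introduce the four 1-forms
\[
\eta_0 := ds+\tfrac{t\sin\alpha}{2}\sigma_2, \qquad \eta_2 := dt-\tfrac{s\sin\alpha}{2}\sigma_2,
\]
\[
\eta_1 := \tfrac{s}{2}d\delta+\tfrac{s\cos\alpha}{2}d\beta-\tfrac{s}{2}\sigma_1+\tfrac{t\sin\alpha}{2}\sigma_3, \qquad \eta_3 := \tfrac{t}{2}d\delta-\tfrac{t\cos\alpha}{2}d\beta+\tfrac{t}{2}\sigma_1+\tfrac{s\sin\alpha}{2}\sigma_3,
\]
so that (\ref{A_1 nondiag SO(3)xId_2})--(\ref{cosA_3-sinA_2 nondiag SO(3)xId_2}) take the form $\tilde A_i = \eta_0\wedge\eta_i - \eta_j\wedge\eta_k$ for $(i,j,k)$ a cyclic permutation of $(1,2,3)$, with $(\tilde A_1,\tilde A_2,\tilde A_3)=(A_1,\cos\gamma A_2+\sin\gamma A_3, -\sin\gamma A_2+\cos\gamma A_3)$. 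By construction $A_i=\xi_0\wedge\xi_i-\xi_j\wedge\xi_k$, so $(\tilde A_1,\tilde A_2,\tilde A_3)$ is an $\SO(3)$-rotation of the standard anti-self-dual basis attached to the coframe $\{\xi_i\}$. Squaring the identity $\tilde A_1 = A_1$ gives
\[
\eta_0\wedge\eta_1\wedge\eta_2\wedge\eta_3 = -\tfrac{1}{2}\, \tilde A_1\wedge\tilde A_1 = \xi_0\wedge\xi_1\wedge\xi_2\wedge\xi_3,
\]
which is non-zero on $\mathcal{U}$. Hence at each point of $\mathcal{U}$ the coframes $\{\eta_i\}$ and $\{\xi_i\}$ span the same 4-plane in $T^\ast M$, induce the same volume form on it, and yield $\SO(3)$-equivalent bases of its space of anti-self-dual 2-forms. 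Since the anti-self-dual three-plane of an oriented 4-dimensional inner product space determines its conformal class, and a compatible volume form fixes the conformal scale, we conclude
\[
\xi_0^2+\xi_1^2+\xi_2^2+\xi_3^2 = \eta_0^2+\eta_1^2+\eta_2^2+\eta_3^2.
\]

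It only remains to expand $\sum_{i=0}^3 \eta_i^2$ in the adapted coframe. A short symmetric-tensor bookkeeping shows that the $\sigma_1\sigma_3$ and $d\beta\,\sigma_3$ cross-terms from $\eta_1^2$ and $\eta_3^2$ cancel pairwise, while every other contribution combines via $s^2+t^2=r^2$ into one of the summands listed in the statement; in particular the coordinates $d\gamma$, $d\theta$, $d\phi$ never appear individually, consistently with the $\SU(2)$-invariance of $g_c$. The main obstacle in this strategy is the linear-algebraic step that identifies the two quadratic forms from equality of anti-self-dual subspaces and of volume forms; once that identification is granted, the remaining expansion is routine and entirely avoids the heavy computation of $\sum\xi_i^2$ by direct substitution.
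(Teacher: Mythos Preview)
Your proof is correct and takes a genuinely different route from the paper. The paper simply substitutes (\ref{a_is SO(3)xId_2}) into the vertical 1-forms (\ref{Vertical1FormsinFrame SO(3)xId_2}) and computes $\sum_i\xi_i^2$ by brute force in the adapted coframe; this is the ``direct substitution'' you deliberately avoid.

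Your strategy instead recycles the factorizations of Corollary \ref{Ai's in spin(3) coframe SO(3)xId_2}: with $\eta_0=\tilde{ds}$, $\eta_2=\tilde{dt}$, $\eta_1=\tfrac{1}{2}\omega_1$, $\eta_3=\tfrac{1}{2}\omega_2$, the rotated triple $(\tilde A_1,\tilde A_2,\tilde A_3)$ is the standard anti-self-dual basis attached to $\{\eta_i\}$. Since an anti-self-dual 3-plane in $\Lambda^2$ of an oriented 4-space fixes the conformal class, and the equality $-\tfrac12 A_1{\wedge}A_1=\xi_0{\wedge}\xi_1{\wedge}\xi_2{\wedge}\xi_3=\eta_0{\wedge}\eta_1{\wedge}\eta_2{\wedge}\eta_3$ fixes the scale, you obtain $\sum\xi_i^2=\sum\eta_i^2$ without ever writing the $\xi_i$ in the new coordinates. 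This linear-algebraic step is standard and sound; the equality of decomposable 4-forms does guarantee that $\{\eta_i\}$ and $\{\xi_i\}$ span the same 4-plane in $T^\ast M$, so the conformal-geometry argument applies on that common 4-plane and the resulting symmetric tensors agree as elements of $\mathrm{Sym}^2(T^\ast M)$.

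In effect you have proved Proposition \ref{Proposition Diagonal metric SO(3)xId_2} first (the identity $\sum_i\xi_i^2=\tilde{ds}^2+\tilde{dt}^2+\tfrac14(\omega_1^2+\omega_2^2)$) and then \emph{expanded} to obtain Lemma \ref{g_c invariant SO(3)xId_2}, whereas the paper proves the lemma by direct computation and then deduces the proposition from it. Your route is conceptually cleaner and reuses work already done, at the price of invoking the (well-known) fact that the anti-self-dual subspace determines the conformal class; the paper's route is entirely elementary but computationally heavier.
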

\begin{proof}
Combining (\ref{g_c}), (\ref{frame S^4 SO(3)xId_2}), (\ref{Vertical1FormsinFrame SO(3)xId_2}) and (\ref{Coframe orbit SO(3)xId_2}), it is easy to obtain the Riemannian metric in the claimed form.
%The proof follows from a straightforward combination of (\ref{g_c}), (\ref{frame S^4 SO(3)xId_2}), (\ref{Vertical1FormsinFrame SO(3)xId_2}) and (\ref{Coframe orbit SO(3)xId_2}).
\end{proof}

\subsection{The diagonalizing coframe and frame}\label{Subsection final coframe SO(3)xId_2} In this subsection we define the last coframe on $\dS_{\!-}(S^4)$ that we will use. The motivation comes from the form of $A_1$, $\cos\gamma A_2+\sin\gamma A_3$ and $\cos\gamma A_3-\sin\gamma A_2$ that we obtained in (\ref{A_1 nondiag SO(3)xId_2}), (\ref{cosA_2+sinA_3 nondiag SO(3)xId_2}) and (\ref{cosA_3-sinA_2 nondiag SO(3)xId_2}), respectively. We let: 
%\begin{equation} \label{tildedsdtomegai}
%\begin{aligned}
%\tilde{ds}&=ds+\frac{t\sin\alpha}{2}\sigma_2; \hspace{25pt} \omega_1=s d\delta+{s\cos\alpha}d\beta-{s}\sigma_1+{t\sin\alpha}\sigma_3;\\
%\tilde{dt}&=dt-\frac{s\sin\alpha}{2}\sigma_2; \hspace{25pt} \omega_2=td\delta-{t\cos\alpha}d\beta+{t}\sigma_1+s\sin\alpha\sigma_3.
%\end{aligned}
%\end{equation}
\begin{equation} \label{tildedsdtomegai SO(3)xId_2}
\begin{split}
\tilde{ds}&=ds+\frac{t\sin\alpha}{2}\sigma_2;\\
\omega_1&=s d\delta+{s\cos\alpha}d\beta-{s}\sigma_1+{t\sin\alpha}\sigma_3;\\
\end{split}
\quad \quad
\begin{split}
 \tilde{dt}&=dt-\frac{s\sin\alpha}{2}\sigma_2; \\
  \omega_2&=td\delta-{t\cos\alpha}d\beta+{t}\sigma_1+s\sin\alpha\sigma_3.
\end{split}
\end{equation}
Since $t\omega_1+s\omega_2=2tsd\delta+(t^2+s^2)\sin\alpha\sigma_3$ and $s\omega_2-t\omega_1=2st\sigma_1-2st\cos\alpha d\beta+(s^2-t^2)\sin\alpha\sigma_3$, it is clear that $\{\sigma_2, \sigma_3, d\alpha,d\beta, \omega_1,\omega_2,\tilde{ds}, \tilde{dt}\}$ is a coframe on $\mathcal{U}$. Let $\{e_2, e_3, e_\alpha, e_\beta, e_{\omega_1}, e_{\omega_2},e_s, e_t\}$ denote the relative dual frame.
\begin{corollary}\label{Ai's in diagonalizing coframe SO(3)xId_2}
The vertical 2-forms $A_1$, $A_2,$ $A_3$, in the coframe defined in this subsection, satisfy: 
\begin{equation}
\begin{aligned}
A_1=\frac{1}{2}\left(\tilde{ds}\wedge \omega_1- \tilde{dt} \wedge \omega_2\right)
\end{aligned}
\end{equation}
and 
\begin{align}
\cos\gamma A_2+\sin\gamma A_3&= \tilde{ds}\wedge \tilde{dt}+\frac{1}{4}\omega_1\wedge\omega_2;\\
\cos\gamma A_3-\sin\gamma A_2 &=\frac{1}{2}\left(\tilde{ds}\wedge \omega_2+\tilde{dt}\wedge \omega_1 \right).
\end{align}
\end{corollary}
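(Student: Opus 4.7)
The plan is to recognize that this corollary is essentially a reformatting of Corollary \ref{Ai's in spin(3) coframe SO(3)xId_2}: the diagonalizing coframe was engineered precisely so that the bracketed linear combinations appearing in (\ref{A_1 nondiag SO(3)xId_2}), (\ref{cosA_2+sinA_3 nondiag SO(3)xId_2}) and (\ref{cosA_3-sinA_2 nondiag SO(3)xId_2}) become the new 1-forms up to explicit scalar factors. So the proof will just be a direct substitution, with the main task being to track the prefactors of $1/2$ coming from the definitions of $\omega_1,\omega_2$ in (\ref{tildedsdtomegai SO(3)xId_2}).

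First I would read off from (\ref{tildedsdtomegai SO(3)xId_2}) the four identifications
\[
\tilde{ds}=ds+\tfrac{t\sin\alpha}{2}\sigma_2,\quad \tilde{dt}=dt-\tfrac{s\sin\alpha}{2}\sigma_2,
\]
\[
\tfrac{1}{2}\omega_1=\tfrac{s}{2}d\delta+\tfrac{s\cos\alpha}{2}d\beta-\tfrac{s}{2}\sigma_1+\tfrac{t\sin\alpha}{2}\sigma_3,\quad \tfrac{1}{2}\omega_2=\tfrac{t}{2}d\delta-\tfrac{t\cos\alpha}{2}d\beta+\tfrac{t}{2}\sigma_1+\tfrac{s\sin\alpha}{2}\sigma_3,
\]
and observe that the first factors of each wedge in (\ref{A_1 nondiag SO(3)xId_2}) are exactly $\tilde{ds}$ and $\tilde{dt}$ while the second factors are $\tfrac{1}{2}\omega_1$ and $\tfrac{1}{2}\omega_2$, respectively. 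This immediately yields $A_1=\tfrac{1}{2}(\tilde{ds}\wedge\omega_1-\tilde{dt}\wedge\omega_2)$.

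For the remaining two identities I would apply the same matching to (\ref{cosA_2+sinA_3 nondiag SO(3)xId_2}) and (\ref{cosA_3-sinA_2 nondiag SO(3)xId_2}). In (\ref{cosA_2+sinA_3 nondiag SO(3)xId_2}) the first wedge is literally $\tilde{ds}\wedge\tilde{dt}$, and the second wedge is $\tfrac{1}{2}\omega_1\wedge\tfrac{1}{2}\omega_2=\tfrac{1}{4}\omega_1\wedge\omega_2$, giving the stated formula. In (\ref{cosA_3-sinA_2 nondiag SO(3)xId_2}) the first wedge is $\tilde{ds}\wedge\tfrac{1}{2}\omega_2$ and the second is $\tilde{dt}\wedge\tfrac{1}{2}\omega_1$, whose sum is $\tfrac{1}{2}(\tilde{ds}\wedge\omega_2+\tilde{dt}\wedge\omega_1)$.

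There is essentially no obstacle: the whole content of the corollary is that the coframe introduced in Subsection \ref{Subsection final coframe SO(3)xId_2} diagonalizes the building blocks of $A_1,A_2,A_3$ that had already been isolated in (\ref{A_1 nondiag SO(3)xId_2})--(\ref{cosA_3-sinA_2 nondiag SO(3)xId_2}). The only care needed is bookkeeping of the factors of $\tfrac{1}{2}$, which is why the final expressions carry the overall $\tfrac{1}{2}$ and $\tfrac{1}{4}$ coefficients. Since these identifications are independent of $\alpha,\beta,\gamma,\delta,s,t$ and of the coordinates $\theta,\phi$ hidden inside $\sigma_1,\sigma_2,\sigma_3$, the equalities hold as identities of differential forms on the chart $\mathcal U$ introduced in Subsection \ref{Subsection Spin(3) adapted coordinates SO(3)xId_2}.
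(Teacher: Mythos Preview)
Your proof is correct and follows exactly the same approach as the paper, which simply states that the result ``follows immediately from Corollary \ref{Ai's in spin(3) coframe SO(3)xId_2} and (\ref{tildedsdtomegai SO(3)xId_2}).'' You have merely made explicit the bookkeeping of the factors of $\tfrac{1}{2}$ that the paper leaves to the reader.
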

\begin{proof}
It follows immediately from Corollary \ref{Ai's in spin(3) coframe SO(3)xId_2} and (\ref{tildedsdtomegai SO(3)xId_2}).
\end{proof}

\begin{proposition}
Given $c\geq0$, the Cayley form $\Phi_c$, in the coframe defined in this subsection, satisfies: 
\begin{equation}\label{Phi_c final coordinates SO(3)xId_2}
\begin{aligned}
\Phi_c=&4(c+r^2)^{-4/5} \tilde{ds}\wedge\tilde{dt}\wedge\omega_2\wedge\omega_1 +25(c+r^2)^{6/5}\sin\alpha\cos^2\alpha d\alpha\wedge d\beta\wedge\sigma_3\wedge\sigma_2\\
&10(c+r^2)^{1/5} \bigg( \left(\tilde{ds}\wedge \omega_1- \tilde{dt} \wedge \omega_2\right)\wedge\left(\sin\alpha d\alpha\wedge d\beta +\cos^2\alpha \sigma_2\wedge \sigma_3\right)\\
&+\frac{1}{2}\left( 4\tilde{ds}\wedge \tilde{dt}+\omega_1\wedge\omega_2\right)\wedge\left(\cos\alpha(d\alpha\wedge \sigma_2-\sin\alpha d\beta\wedge \sigma_3)\right)\\
&+\left(\tilde{ds}\wedge \omega_2+\tilde{dt}\wedge \omega_1 \right)\wedge \cos\alpha\left(-d\alpha\wedge\sigma_3-\sin\alpha  d\beta\wedge \sigma_2\right)\bigg),
\end{aligned}
\end{equation}
where $r^2=s^2+t^2$.
\end{proposition}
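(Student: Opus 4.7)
The plan is to substitute the diagonalizing expressions into the defining formula (\ref{Phi_c}) term by term, using the volume identities (\ref{vol=Omega/A_i^2}) and the $\SU(2)$-rotation identity (\ref{sum_i A_iOmega_i}) collected in Remark~\ref{Phi_c bad coframe SO(3)xId_2}, together with Lemma~\ref{Omegais adapted coordinates SO(3)xId_2} and Corollary~\ref{Ai's in diagonalizing coframe SO(3)xId_2}. No further geometric input is needed; the proof is entirely algebraic manipulation of wedge products.

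For the horizontal top-form term $25(c+r^2)^{6/5}\,b_0\wedge b_1\wedge b_2\wedge b_3$, I would use $b_0\wedge b_1\wedge b_2\wedge b_3 = -\tfrac{1}{2}\Omega_1\wedge\Omega_1$ together with $\Omega_1 = \sin\alpha\, d\alpha\wedge d\beta + \cos^2\alpha\,\sigma_2\wedge\sigma_3$ from Lemma~\ref{Omegais adapted coordinates SO(3)xId_2}. Since each of $d\alpha\wedge d\beta$ and $\sigma_2\wedge\sigma_3$ squares to zero, only the cross term contributes, giving $\Omega_1\wedge\Omega_1 = 2\sin\alpha\cos^2\alpha\, d\alpha\wedge d\beta\wedge\sigma_2\wedge\sigma_3$, and after swapping $\sigma_2\wedge\sigma_3$ to $-\sigma_3\wedge\sigma_2$ this reproduces the second summand of (\ref{Phi_c final coordinates SO(3)xId_2}). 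The vertical top-form term $16(c+r^2)^{-4/5}\xi_0\wedge\xi_1\wedge\xi_2\wedge\xi_3$ is handled in the same way, via $\xi_0\wedge\xi_1\wedge\xi_2\wedge\xi_3 = -\tfrac{1}{2}A_1\wedge A_1$ and the expression $A_1 = \tfrac{1}{2}(\tilde{ds}\wedge\omega_1 - \tilde{dt}\wedge\omega_2)$ from Corollary~\ref{Ai's in diagonalizing coframe SO(3)xId_2}; again only the cross term survives, and after reordering to $\tilde{ds}\wedge\tilde{dt}\wedge\omega_2\wedge\omega_1$ the overall coefficient works out to $4$.

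The mixed term $20(c+r^2)^{1/5}\sum_i A_i\wedge\Omega_i$ is the most calculation-heavy step but requires no new ideas. Applying (\ref{sum_i A_iOmega_i}) splits the sum into $A_1\wedge\Omega_1$ plus the two $\gamma$-rotated pairings, each of whose factors is already expressed in the diagonalizing coframe by Lemma~\ref{Omegais adapted coordinates SO(3)xId_2} and Corollary~\ref{Ai's in diagonalizing coframe SO(3)xId_2}. The overall coefficient $20$ absorbs the explicit $\tfrac{1}{2}$'s appearing in $A_1$ and in $\cos\gamma A_3 - \sin\gamma A_2$ to produce the two factors $10$ in (\ref{Phi_c final coordinates SO(3)xId_2}), while the term $\cos\gamma A_2 + \sin\gamma A_3 = \tilde{ds}\wedge\tilde{dt} + \tfrac{1}{4}\omega_1\wedge\omega_2$ is rewritten as $\tfrac{1}{2}(4\tilde{ds}\wedge\tilde{dt} + \omega_1\wedge\omega_2)$ so that the same common prefactor $10$ can be pulled out. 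The only real obstacle here is careful sign bookkeeping in the reorderings of the resulting $4$-forms; once this is carried out, the three contributions assemble directly into the bracketed expression of (\ref{Phi_c final coordinates SO(3)xId_2}).
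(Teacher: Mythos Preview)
Your proposal is correct and follows exactly the paper's own approach: the paper's proof is the single sentence ``This is a straightforward consequence of Lemma~\ref{Omegais adapted coordinates SO(3)xId_2}, (\ref{vol=Omega/A_i^2}), (\ref{sum_i A_iOmega_i}) and Corollary~\ref{Ai's in diagonalizing coframe SO(3)xId_2}'', and you have simply unpacked that sentence in detail. One small slip in your bookkeeping: $\tilde{ds}\wedge\tilde{dt}+\tfrac{1}{4}\omega_1\wedge\omega_2$ equals $\tfrac{1}{4}(4\tilde{ds}\wedge\tilde{dt}+\omega_1\wedge\omega_2)$, not $\tfrac{1}{2}(\cdots)$; the correct way to read the prefactor is that each bracketed term in (\ref{Phi_c final coordinates SO(3)xId_2}) is \emph{twice} the corresponding $A\wedge\Omega$ pairing, so that $20=10\cdot 2$.
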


\begin{proof}
This is a straightforward consequence of Lemma \ref{Omegais adapted coordinates SO(3)xId_2}, (\ref{vol=Omega/A_i^2}), (\ref{sum_i A_iOmega_i}) and Corollary \ref{Ai's in diagonalizing coframe SO(3)xId_2}.
\end{proof}

\begin{proposition} \label{Proposition Diagonal metric SO(3)xId_2}
Given $c\geq0$, the Riemannian metric $g_c$, in the coframe defined in this subsection, satisfies: 
\begin{align}\label{metric Diagonal SO(3)xId_2}
g_c=5(c+r^2)^{3/5}\left( d\alpha^2+\sin^2\alpha d\beta^2+\cos^2\alpha \left(\sigma_2^2+\sigma_3^2\right)\right)+4(c+r^2)^{-2/5} \left(\tilde{ds}^2+\tilde{dt}^2+\frac{\left(\omega_1^2+\omega_2^2\right)}{4}\right),
\end{align}
where $r^2=s^2+t^2$.
\end{proposition}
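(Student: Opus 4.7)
The plan is a direct verification: substitute the definitions \eqref{tildedsdtomegai SO(3)xId_2} of $\tilde{ds}, \tilde{dt}, \omega_1, \omega_2$ into the right-hand side of \eqref{metric Diagonal SO(3)xId_2}, expand the four squares, and compare term-by-term with the expression for $g_c$ obtained in Lemma \ref{g_c invariant SO(3)xId_2}. Since the horizontal block $5(c+r^2)^{3/5}\bigl(d\alpha^2+\sin^2\alpha\, d\beta^2+\cos^2\alpha(\sigma_2^2+\sigma_3^2)\bigr)$ already appears identically in both expressions, the entire content of the proof reduces to showing that
\[
\tilde{ds}^{\,2}+\tilde{dt}^{\,2}+\tfrac{1}{4}\bigl(\omega_1^2+\omega_2^2\bigr)
\]
equals the fibre block inside the $4(c+r^2)^{-2/5}$ bracket of Lemma \ref{g_c invariant SO(3)xId_2}.

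First I would expand $\tilde{ds}^{\,2}+\tilde{dt}^{\,2}$. The cross terms contribute $\sin\alpha(tds-sdt)\sigma_2$ because the $\sigma_2$-factors carry opposite signs on $\tilde{ds}$ and $\tilde{dt}$, while the squares of the $\sigma_2$-parts add to $\tfrac{r^2\sin^2\alpha}{4}\sigma_2^2$. This produces exactly the $ds^2+dt^2$, the $\sin\alpha(tds-sdt)\sigma_2$, and the missing $\sigma_2^2$ piece needed to complete the $\tfrac{r^2\sin^2\alpha}{4}(\sigma_2^2+\sigma_3^2)$ term of Lemma \ref{g_c invariant SO(3)xId_2}.

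Next I would expand $\omega_1^2+\omega_2^2$. Here the key simplifications are that, upon summing, the cross terms $\pm 2st\cos\alpha\sin\alpha\, d\beta\,\sigma_3$ and $\mp 2st\sin\alpha\,\sigma_1\sigma_3$ cancel, while the diagonal contributions collect with coefficient $s^2+t^2=r^2$ to yield $r^2 d\delta^2 + r^2\cos^2\alpha\, d\beta^2 + r^2\sigma_1^2 + r^2\sin^2\alpha\,\sigma_3^2$. The surviving cross terms $-2(t^2-s^2)\cos\alpha\,d\delta\,d\beta$, $2(t^2-s^2)d\delta\,\sigma_1$, $4st\sin\alpha\, d\delta\,\sigma_3$, and $-2r^2\cos\alpha\, d\beta\,\sigma_1$ reproduce exactly, after division by $4$, the remaining five cross terms in Lemma \ref{g_c invariant SO(3)xId_2}, and the $\tfrac{r^2\sin^2\alpha}{4}\sigma_3^2$ combines with the $\tfrac{r^2\sin^2\alpha}{4}\sigma_2^2$ from the previous step into the $(\sigma_2^2+\sigma_3^2)$ combination.

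There is no conceptual obstacle here: the coframe $\{\tilde{ds},\tilde{dt},\omega_1,\omega_2\}$ was defined precisely so that the fibre metric becomes diagonal, and the verification is pure bookkeeping. The only mild subtlety is making sure that all the off-diagonal cross terms in the original expression of Lemma \ref{g_c invariant SO(3)xId_2} (involving $d\delta\,\sigma_1$, $d\delta\,\sigma_3$, $d\delta\,d\beta$, $d\beta\,\sigma_1$, and $(tds-sdt)\sigma_2$) are matched with the right coefficients and signs, which is what motivates the asymmetric choice of $\pm\frac{s\sin\alpha}{2}\sigma_2$ and $\pm\frac{t\sin\alpha}{2}\sigma_3$ in \eqref{tildedsdtomegai SO(3)xId_2}.
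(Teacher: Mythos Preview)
Your proposal is correct and follows exactly the approach of the paper's proof, which simply notes that the horizontal block is unchanged from Lemma~\ref{g_c invariant SO(3)xId_2} and that substituting the definitions \eqref{tildedsdtomegai SO(3)xId_2} reproduces the vertical block. Your expansion is more explicit than what the paper records, but the strategy is identical.
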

\begin{proof}
The first addendum remains invariant from Lemma \ref{g_c invariant SO(3)xId_2}, while (\ref{tildedsdtomegai SO(3)xId_2}) implies that the remaining part is equal to the second addendum in Lemma \ref{g_c invariant SO(3)xId_2}.
\end{proof}
In particular, using this coframe, we sacrifice compatibility with the group action to obtain a simpler form for $\Phi_c$ and a diagonal metric.

We conclude this subsection by computing the dual frame with respect to the $\SU(2)$ adapted frame $\{\partial_1, \partial_2,\partial_3, \partial_\alpha,\partial_\beta,\partial_s,\partial_t,\partial_\delta\}$.
\begin{lemma} \label{e_i w.r.t partial_i SO(3)xId_2}
The dual frame $\{e_2, e_3, e_\alpha, e_\beta, e_{\omega_1}, e_{\omega_2},e_s, e_t\}$ satisfies:
\begin{equation} 
\begin{split}
e_\alpha&=\partial_\alpha;\\
e_2&=\partial_2 -\frac{t\sin\alpha}{2}\partial_s+\frac{s\sin\alpha}{2}\partial_t; \\
e_{s}&=\partial_s; \\
e_{\omega_1}&=\frac{1}{2s}\partial_{\delta}-\frac{1}{2s}\partial_1;
\end{split}
\quad\quad
\begin{split}
e_\beta&=\partial_\beta+\cos\alpha\partial_1;\\
e_3&=\partial_3-\frac{(s^2+t^2)\sin\alpha}{2st}\partial_{\delta}+\frac{(t^2-s^2)\sin\alpha}{2st}\partial_1;\\
e_t&=\partial_t;\\
e_{\omega_2}&=\frac{1}{2t}\partial_{\delta}+\frac{1}{2t}\partial_1;
\end{split}
\end{equation}
where $\{\partial_1, \partial_2,\partial_3, \partial_\alpha,\partial_\beta,\partial_s,\partial_t,\partial_\delta\}$ is the dual frame with respect to the $\SU(2)$ adapted coordinates of Subsection \ref{Subsection Spin(3) adapted coordinates SO(3)xId_2}.
\end{lemma}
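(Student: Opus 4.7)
The proof is a straightforward linear-algebra verification. Since $\{\sigma_2,\sigma_3,d\alpha,d\beta\}$ appear unchanged in both the adapted coframe of Subsection~\ref{Subsection Spin(3) adapted coordinates SO(3)xId_2} and the diagonalising coframe of Subsection~\ref{Subsection final coframe SO(3)xId_2}, the transition matrix between the two is block-diagonal, acting as the identity on the span of those four forms and non-trivially only on the quadruple $\{ds,dt,d\delta,\sigma_1\}$, which is replaced by $\{\tilde{ds},\tilde{dt},\omega_1,\omega_2\}$. The plan is to invert this $4\times 4$ change of basis explicitly and then to read off the dual frame as its inverse transpose; equivalently, one may verify directly the $64$ duality pairings $\theta^i(e_j)=\delta^i_j$.

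From the first two identities in (\ref{tildedsdtomegai SO(3)xId_2}) one obtains immediately
\[
ds=\tilde{ds}-\frac{t\sin\alpha}{2}\sigma_2,\qquad dt=\tilde{dt}+\frac{s\sin\alpha}{2}\sigma_2.
\]
For the remaining pair, the combinations $\omega_1/(2s)+\omega_2/(2t)$ and $\omega_2/(2t)-\omega_1/(2s)$ decouple $d\delta$ and $\sigma_1$, yielding
\[
d\delta=\frac{\omega_1}{2s}+\frac{\omega_2}{2t}-\frac{(s^2+t^2)\sin\alpha}{2st}\sigma_3,
\]
\[
\sigma_1=\cos\alpha\,d\beta-\frac{\omega_1}{2s}+\frac{\omega_2}{2t}+\frac{(t^2-s^2)\sin\alpha}{2st}\sigma_3.
\]
Reading off the inverse transpose of the resulting matrix then produces the eight vector fields of the statement.

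No real obstacle is anticipated: the change of coframe splits into two independent $2\times 2$ subsystems, namely $(ds,dt)\leftrightarrow(\tilde{ds},\tilde{dt})$ coupled only through $\sigma_2$, and $(d\delta,\sigma_1)\leftrightarrow(\omega_1,\omega_2)$ coupled only through $d\beta$ and $\sigma_3$. The only book-keeping that deserves care is how $e_2$ acquires the corrections $-\tfrac{t\sin\alpha}{2}\partial_s+\tfrac{s\sin\alpha}{2}\partial_t$ so as to annihilate the $\sigma_2$-components of $\tilde{ds},\tilde{dt}$, and how $e_3$ must combine $\partial_\delta$ and $\partial_1$ with the precise coefficients $-(s^2+t^2)\sin\alpha/(2st)$ and $(t^2-s^2)\sin\alpha/(2st)$ in order to annihilate the $\sigma_3$-components of $\omega_1$ and $\omega_2$. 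Positivity of $s$ and $t$ on $\mathcal{U}$, already assumed in Subsection~\ref{Subsection Spin(3) adapted coordinates SO(3)xId_2}, makes these denominators harmless.
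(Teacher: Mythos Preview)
Your proposal is correct and follows the same approach as the paper, which simply states that the identities are straightforward to verify from (\ref{tildedsdtomegai SO(3)xId_2}) and the definition of dual frame. You have merely made explicit the inversion of the coframe change and the duality check that the paper leaves to the reader.
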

\begin{proof}
It is straightforward to verify these identities from (\ref{tildedsdtomegai SO(3)xId_2}) and the definition of dual frame. 
\end{proof}

\subsection{The Cayley condition}\label{subsection Cayley condition SO(3)xId_2} 
%As the generic orbit of the $\SU(2)$ action we are considering is $3$-dimensional, see Lemma \ref{Geometry of Spin(3) orbits SO(3)xId_2}, we can use Harvey and Lawson local existence and uniqueness theorem \cite[Theorem IV .4.3]{HL82} to obtain local Cayleys passing through any given generic orbit. Therefore, it is sensible to look for $\SU(2)$-invariant Cayley submanifolds. To this scope, consider a 1-parameter family of $3$-dimensional $\SU(2)$-orbits in $M$ defining a submanifold $N$. The coordinates that do not describe the orbits, i.e. $\alpha$, $\beta$, $s$, $t$ and $\delta$, need to be functions of a parameter $\tau$. Therefore, we have
As the generic orbit of the $\SU(2)$ action we are considering is $3$-dimensional (see Lemma \ref{Geometry of Spin(3) orbits SO(3)xId_2}), it is sensible to look for $\SU(2)$-invariant Cayley submanifolds. Indeed, Harvey and Lawson theorem \cite[Theorem IV .4.3]{HL82} guarantees the local existence and uniqueness of a Cayley passing through any given generic orbit. To construct such a submanifold $N$, we consider a 1-parameter family of $3$-dimensional $\SU(2)$-orbits in $M$. Hence, the coordinates that do not describe the orbits, i.e. $\alpha$, $\beta$, $s$, $t$ and $\delta$, need to be functions of a parameter $\tau$. Explicitly, we have:
\begin{equation}
\begin{aligned}
N=&\bigg\{\bigg( (\cos\alpha(\tau)\textbf{u},\sin\alpha(\tau)\textbf{v}),\bigg((s(\tau)\cos\left(\frac{\delta(\tau)-\gamma}{2}\right),s(\tau)\sin\left(\frac{\delta(\tau)-\gamma}{2}\right),\\
&t(\tau)\cos\left( \frac{\delta(\tau)+\gamma}{2}\right), t(\tau)\sin\left(\frac{\delta(\tau)+\gamma}{2}\right)\bigg)\bigg): \av{\textbf{u}}=1,\av{\textbf{v}}=1, \gamma\in [0,4\pi),\tau\in(-\epsilon,\epsilon)\bigg\},
\end{aligned}
\end{equation}
%$$
%N=\{( (\cos\alpha(\tau)\textbf{u},\sin\alpha(\tau)\textbf{v}),((s(\tau)\cos(\frac{\delta-\gamma}{2}),s(\tau)\sin(\frac{\delta-\gamma}{2}),t(\tau)\cos( \frac{\delta+\gamma}{2}), t(\tau)\sin(\frac{\delta+\gamma}{2})))\}
%$$ 
and its tangent space is spanned by: $\{\partial_1,\partial_2,\partial_3, \dot{s}\partial_s+\dot{t}\partial_t+\dot{\alpha}\partial_\alpha+\dot{\beta}\partial_\beta+\dot{\delta}\partial_{\delta}\},$ where the dots denotes the derivative with respect to $\tau$. The Cayley condition imposed on this tangent space (see Proposition \ref{Cayley condition KM}) generates a system of ODEs on $\alpha,\beta,s,t,\delta$.

%We are finally ready to deduce the ODEs characterizing the Cayley condition. 
\begin{theorem} \label{Complicated ODEs SO(3)xId_2}
Let $N$ be an $\SU(2)$-invariant submanifold as described at the beginning of this subsection. Then, $N$ is Cayley in the chart $\mathcal{U}$ if and only if the following system of ODEs is satisfied: 
\begin{equation}\label{complicated system SO(3)xId_2}
\left\{
\begin{aligned}
&(s^2+t^2)\sin^2\alpha\cos\alpha \dot\beta=0\\
&\cos^2\alpha(t\dot{s}-s\dot{t})=0\\
&\cos^2\alpha st\dot{\delta}=0\\
&-5(c+r^2)\cos^2\alpha s\dot{\alpha}+r^2 \sin^2\alpha\dot{\alpha} s-2\sin\alpha\cos\alpha t^2 \dot{s} -4\cos\alpha\sin\alpha s^2\dot{s}-2\sin\alpha\cos\alpha st\dot{t}=0\\
&5(c+r^2)\cos^2\alpha t\dot{\alpha}-r^2 \sin^2\alpha\dot{\alpha} t+2\sin\alpha\cos\alpha s^2 \dot{t} +4\cos\alpha\sin\alpha t^2\dot{t}+2\sin\alpha\cos\alpha st\dot{s}=0\\
&5(c+r^2)\sin\alpha\cos^2\alpha \dot{\beta}s-2\sin\alpha\cos\alpha t^2 s\dot{\delta}-r^2 \sin^3\alpha \dot{\beta}s=0\\
&-5(c+r^2)\sin\alpha\cos^2\alpha \dot{\beta}t-2\sin\alpha\cos\alpha ts^2 \dot{\delta}+r^2 \sin^3\alpha \dot{\beta}t=0
\end{aligned}
\right. ,
\end{equation}
where $r^2=s^2+t^2$ as usual.
\end{theorem}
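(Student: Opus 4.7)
The plan is to impose the Cayley condition of Proposition~\ref{Cayley condition KM} on the $4$-plane
$H := \mathrm{span}\{u,v,w,y\}$ with $u := \partial_1$, $v := \partial_2$, $w := \partial_3$ and
$y := T := \dot{s}\partial_s + \dot{t}\partial_t + \dot{\alpha}\partial_\alpha + \dot{\beta}\partial_\beta + \dot{\delta}\partial_\delta$. I work throughout in the diagonalizing coframe $\{d\alpha, d\beta, \sigma_2, \sigma_3, \tilde{ds}, \tilde{dt}, \omega_1, \omega_2\}$ of Subsection~\ref{Subsection final coframe SO(3)xId_2}, where $\Phi_c$ has the compact expression~(\ref{Phi_c final coordinates SO(3)xId_2}) and the metric $g_c$ is diagonal (Proposition~\ref{Proposition Diagonal metric SO(3)xId_2}). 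In particular, the musical isomorphism acts by multiplying each basis covector by the corresponding diagonal entry of $g_c$, so no Gram matrix inversion is required.

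The first step is to re-express $u,v,w,y$ in the dual frame $\{e_\alpha, e_\beta, e_2, e_3, e_s, e_t, e_{\omega_1}, e_{\omega_2}\}$ by inverting the identities of Lemma~\ref{e_i w.r.t partial_i SO(3)xId_2}. A direct calculation gives
$\partial_\delta = s e_{\omega_1} + t e_{\omega_2}$, $\partial_1 = t e_{\omega_2} - s e_{\omega_1}$,
$\partial_2 = e_2 + \tfrac{t\sin\alpha}{2}e_s - \tfrac{s\sin\alpha}{2}e_t$,
$\partial_3 = e_3 + t\sin\alpha\, e_{\omega_1} + s\sin\alpha\, e_{\omega_2}$, and
$\partial_\beta = e_\beta - \cos\alpha\,(t e_{\omega_2} - s e_{\omega_1})$, which permits $T$ to be written in the new frame as a linear combination of $e_\alpha, e_\beta, e_s, e_t, e_{\omega_1}, e_{\omega_2}$ whose coefficients depend only on $\dot\alpha, \dot\beta, \dot s, \dot t, \dot\delta$ and on $s,t,\alpha$. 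A key observation is that $u,v,w$ have no $e_\alpha$ or $e_\beta$ component; this considerably shortens the subsequent contractions, since only summands of~(\ref{Phi_c final coordinates SO(3)xId_2}) containing at most one factor from $\{d\alpha, d\beta\}$ will survive when contracted against the trio $u,v,w$.

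With the flats $u^{\musFlat}, v^{\musFlat}, w^{\musFlat}, y^{\musFlat}$ at hand, I then compute the four $1$-forms $B(v,w,y), B(w,u,y), B(u,v,y)$, $B(v,u,w)$ via iterated interior product with $\Phi_c$, wedge each with the matching flat, and sum to obtain the $2$-form
\[
\eta' := u^{\musFlat}\wedge B(v,w,y)+v^{\musFlat}\wedge B(w,u,y)+w^{\musFlat}\wedge B(u,v,y)+y^{\musFlat}\wedge B(v,u,w).
\]
Next I apply $\pi_7$, most efficiently through the $\Spin(7)$-equivariant identity
$\pi_7(\eta') = \tfrac{1}{4}\bigl(\eta' + \ast(\eta'\wedge\Phi_c)\bigr)$,
which is equivalent to the formula in Proposition~\ref{Cayley condition KM}. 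Setting $\pi_7(\eta')=0$ and reading off the coefficients along a basis of $\Lambda^2_7$ produces seven scalar equations; matching these against natural basis $2$-forms such as $d\alpha\wedge\omega_i$, $d\beta\wedge\omega_i$, $\sigma_2\wedge\tilde{ds}$, $\sigma_3\wedge\tilde{dt}$ and similar combinations should yield precisely the system~(\ref{complicated system SO(3)xId_2}).

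The main obstacle is computational bookkeeping: while every step is routine in principle, the Cayley form~(\ref{Phi_c final coordinates SO(3)xId_2}) has many summands, the triple contractions generate a large number of cross terms, and correctly isolating the $\Lambda^2_7$ part from its $\Lambda^2_{21}$ complement demands care in tracking signs and coefficients. Exploiting the diagonal structure of $g_c$, the fact that $u,v,w$ are supported in only six of the eight coframe directions, and the natural splitting of $\Phi_c$ into "vertical", "horizontal" and "mixed" summands in~(\ref{Phi_c final coordinates SO(3)xId_2}) is essential to keep the computation tractable and to identify each component of $\pi_7(\eta')$ with one of the seven ODEs in~(\ref{complicated system SO(3)xId_2}).
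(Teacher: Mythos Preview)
Your approach is essentially the same as the paper's: pass to the diagonalizing frame, compute the $B$-terms and the wedge products $\Psi_i$, and project to $\Lambda^2_7$. Two simplifications in the paper are worth noting: first, it replaces your $T$ by $y := T + \dot\beta\cos\alpha\,u$, which spans the same $4$-plane with $u,v,w$ but removes the extra $e_{\omega_i}$-terms coming from $\partial_\beta$; second, and more substantially, rather than applying your Hodge-star formula for $\pi_7$, the paper writes down an explicit basis $\{\lambda_1,\dots,\lambda_7\}$ of $\Lambda^2_7$ and checks that the raw sum $\eta'=\Psi_1+\Psi_2+\Psi_3+\Psi_4$ already lies in $\Lambda^2_7$, so $\pi_7$ is the identity on it and the seven ODEs are read off directly as the coefficients of $\eta'$ in the $\lambda_i$.
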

As it mainly consists of computations, we leave the proof of Theorem \ref{Complicated ODEs SO(3)xId_2} to Appendix \ref{appendix proof of Theorem ODES SO(3)xId_2}. 

\begin{corollary}\label{easy ODEs SO(3)xId_2}
Let $N$ be an $\SU(2)$-invariant submanifold as described at the beginning of this subsection. Then, $N$ is Cayley in the chart $\mathcal{U}$ if and only if the following system of ODEs is satisfied: 
\begin{equation*}
\left\{
\begin{aligned}
&\dot\beta=0\\
&(t\dot{s}-s\dot{t})=0\\
&\dot{\delta}=0\\
&5(c+r^2)\cos^2\alpha st\dot{\alpha}-(s^2+t^2)st \sin^2\alpha\dot{\alpha} +2\sin\alpha\cos\alpha (s^2+t^2) (s\dot{t}+t\dot{s})=0\\
\end{aligned}
\right. ,
\end{equation*}
where $r^2=s^2+t^2$ as usual.
\end{corollary}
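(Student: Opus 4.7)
The plan is to derive the corollary as a direct algebraic consequence of Theorem \ref{Complicated ODEs SO(3)xId_2}, exploiting the fact that on the chart $\mathcal{U}$ defined in Subsection \ref{Subsection Spin(3) adapted coordinates SO(3)xId_2} the quantities $\sin\alpha$, $\cos\alpha$, $s$, $t$ are all strictly positive (indeed $\alpha\in(0,\pi/2)$ and $s,t>0$ there). This allows us to divide out the non-vanishing prefactors without losing information.

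First I would handle the three equations of Theorem \ref{Complicated ODEs SO(3)xId_2} that have a single kinematic quantity as a factor. The first equation $(s^2+t^2)\sin^2\alpha\cos\alpha\,\dot\beta=0$ immediately yields $\dot\beta=0$; the third $\cos^2\alpha\, st\,\dot\delta=0$ yields $\dot\delta=0$; and the second $\cos^2\alpha(t\dot s-s\dot t)=0$ yields $t\dot s-s\dot t=0$. These three conclusions are exactly the first three equations in the corollary, and conversely each of them clearly implies the corresponding original equation on $\mathcal{U}$.

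Next I would combine the fourth and fifth equations of Theorem \ref{Complicated ODEs SO(3)xId_2}. The key observation is that $t\dot s=s\dot t$ implies $st\dot t=t^2\dot s$ and $st\dot s=s^2\dot t$, so the terms $-2\sin\alpha\cos\alpha t^2\dot s-4\sin\alpha\cos\alpha s^2\dot s-2\sin\alpha\cos\alpha st\dot t$ in the fourth equation collapse to $-4\sin\alpha\cos\alpha\,r^2\dot s$, and analogously for the fifth equation. After this collapse, the fourth and fifth equations read
\begin{equation*}
s\dot\alpha\bigl[-5(c+r^2)\cos^2\alpha+r^2\sin^2\alpha\bigr]=4\sin\alpha\cos\alpha\,r^2\dot s,\qquad t\dot\alpha\bigl[-5(c+r^2)\cos^2\alpha+r^2\sin^2\alpha\bigr]=4\sin\alpha\cos\alpha\,r^2\dot t.
\end{equation*}
Multiplying the first by $t$ and the second by $s$ and adding (using $t\dot s+s\dot t=s\dot t+t\dot s$) produces exactly the fourth equation of the corollary. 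Conversely, once $t\dot s=s\dot t$ holds, either of the two displayed scalar equations is equivalent to the combined one together with $s,t>0$, so no information is lost.

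Finally, equations six and seven of Theorem \ref{Complicated ODEs SO(3)xId_2} become trivially satisfied once $\dot\beta=0$ and $\dot\delta=0$ are substituted, so they impose no further conditions. The main obstacle is purely bookkeeping: ensuring that the collapses using $t\dot s=s\dot t$ are carried out correctly and that every division is justified by positivity on $\mathcal{U}$. Since all manipulations are reversible on $\mathcal{U}$, the reduced system is equivalent to the full one, proving the corollary.
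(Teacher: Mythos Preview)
Your proposal is correct and follows essentially the same approach as the paper: you use positivity on $\mathcal{U}$ to extract $\dot\beta=\dot\delta=t\dot s-s\dot t=0$ from the first three equations, discard equations six and seven as trivially satisfied, and then reduce equations four and five to a single relation using $t\dot s=s\dot t$. The only cosmetic difference is the order of operations: the paper forms the linear combinations $t\cdot(4)+s\cdot(5)$ (shown superfluous) and $s\cdot(5)-t\cdot(4)$ (giving the final equation) directly on the unsimplified equations, whereas you first simplify each of (4) and (5) separately using $t\dot s=s\dot t$ and then combine; the underlying linear algebra is identical.
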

\begin{proof}
 As $\alpha\in(0,\pi/2)$ and $s,t>0$, we get immediately the first three equations from the first three equations of (\ref{complicated system SO(3)xId_2}). The last two equations of (\ref{complicated system SO(3)xId_2}) are superfluous as $\dot{\beta}=0$ and $\dot{\delta}=0$. The same holds for  $t$ times the fourth equation plus $s$ times the fifth equation of (\ref{complicated system SO(3)xId_2}), where we use $t\dot{s}-s\dot{t}=0$ this time. We conclude by considering $s$ times the fifth equation minus $t$ times the fourth equation of (\ref{complicated system SO(3)xId_2}).
% As $\alpha\in(0,\pi/2)$ and $s,t>0$, we get immediately the first three equations of (\ref{complicated system SO(3)xId_2}). $\dot{\beta}=0$ and $\dot{\delta}=0$ imply that the last two equations of Theorem \ref{Complicated ODEs SO(3)xId_2} are always satisfied. Moreover, $t\dot{s}-s\dot{t}=0$ implies that $t$ times the fourth equation plus $s$ times the fifth is always satisfied. We conclude by considering $s$ times the fifth equation minus $t$ times the fourth.
\end{proof}

\subsection{The Cayley fibration} In the previous section we found the condition that makes $N$, $\SU(2)$-invariant submanifold, a Cayley submanifold. Explicitly, it consists of a system of ODEs that is completely integrable; these solutions will give us the desired fibration. 
\begin{theorem}
Let $N$ be an $\SU(2)$-invariant submanifold as described at the beginning of Subsection \ref{subsection Cayley condition SO(3)xId_2}. Then, $N$ is Cayley in $\mathcal{U}$ if and only if the following quantities are constant: 
\begin{align*}
\beta,\hspace{30pt} \delta,\hspace{30pt}  \frac{s}{t},\hspace{30pt} F:= 2\sin^{5/2} \alpha\cos^{1/2} \alpha st+5c\frac{st}{(s^2+t^2)}H(\alpha), 
\end{align*}
where $H(\alpha)$ is the primitive function of $h(\alpha):=(\cos\alpha\sin\alpha)^{3/2}$.
\end{theorem}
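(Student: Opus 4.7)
The plan is to invoke Corollary \ref{easy ODEs SO(3)xId_2}, which already reduces the Cayley condition in $\mathcal{U}$ to a system of four ODEs. Three of these, $\dot\beta = 0$, $\dot\delta = 0$, and $t\dot s - s\dot t = 0$, immediately yield the first three conserved quantities: $\beta$ and $\delta$ are constant, and since $t > 0$ on $\mathcal{U}$ one has $\tfrac{d}{d\tau}(s/t) = (t\dot s - s\dot t)/t^2 = 0$. The remaining task is to show that, along any curve already satisfying those three constraints, the fourth equation
\[
5(c+r^2)\cos^2\!\alpha\, st\,\dot\alpha - r^2 st\sin^2\!\alpha\,\dot\alpha + 2\sin\alpha\cos\alpha\, r^2 (s\dot t+t\dot s) = 0
\]
is equivalent to $\dot F = 0$.

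The crucial observation is that when $s/t$ is constant, the quotient $st/r^2$ is also constant: writing $s = \kappa t$ gives $st/r^2 = \kappa/(1+\kappa^2)$. Consequently, differentiating the second piece of $F$ yields only $5c(st/r^2)h(\alpha)\dot\alpha$, while differentiating the first piece, by the product rule together with the identity
\[
\frac{d}{d\alpha}\bigl(\sin^{5/2}\alpha\cos^{1/2}\alpha\bigr) = \tfrac{1}{2}\sin^{3/2}\alpha\cos^{-1/2}\alpha\,(5\cos^2\alpha - \sin^2\alpha),
\]
produces $\sin^{3/2}\alpha\cos^{-1/2}\alpha\,(5\cos^2\alpha - \sin^2\alpha)\, st\,\dot\alpha + 2\sin^{5/2}\alpha\cos^{1/2}\alpha\,(s\dot t + t\dot s)$. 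Multiplying the resulting expression for $\dot F$ through by $r^2\sin^{-3/2}\alpha\cos^{1/2}\alpha$, a nonzero factor on $\mathcal{U}$, recovers exactly the left-hand side of the fourth ODE, establishing the desired equivalence.

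The main obstacle here is not technical: with Corollary \ref{easy ODEs SO(3)xId_2} and the explicit form of $F$ in hand, the verification is a direct computation. The substantive step is identifying $F$ in the first place, in particular the $c$-dependent correction $5c(st/r^2)H(\alpha)$. A natural route to discover it is to first treat the conical case $c = 0$, where $\sin^{5/2}\alpha\cos^{1/2}\alpha\, st$ is seen to be conserved, and then look for a correction of the form $(st/r^2)\, G(\alpha)$ — an ansatz made legitimate by the prior constancy of $st/r^2$ — which reduces the fourth equation to $G' = h$ and forces $G = H$ up to an additive constant that can be absorbed into $F$.
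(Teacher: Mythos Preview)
Your proof is correct and follows essentially the same route as the paper: invoke Corollary \ref{easy ODEs SO(3)xId_2}, read off the constancy of $\beta$, $\delta$, $s/t$ directly, and then verify that, once $s/t$ is constant, $\dot F=0$ is equivalent to the remaining ODE. You spell out the computation that the paper leaves implicit (the paper simply asserts that ``the derivative with respect to $\tau$ of $F$ is equivalent to the last equation of Corollary \ref{easy ODEs SO(3)xId_2} if we assume that $s/t$ is constant''), and your heuristic for discovering $F$ is a helpful addition, but the underlying argument is the same.
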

\begin{proof}
The condition on $\beta$ and $\delta$ follows immediately from Corollary \ref{easy ODEs SO(3)xId_2}. Taking the derivative in $\tau$ of $s/t$, we see that
%From Corollary \ref{easy ODEs SO(3)xId_2}, the condition on $\beta$ and $\delta$ is obvious. Taking the derivative in $\tau$ of $s/t$, we see:
\begin{align*}
0=\frac{d}{d\tau}\left(\frac{s}{t}\right)=\frac{\dot{s}t-\dot{t}s}{t^2}, 
\end{align*}
which is equivalent to the second equation in Corollary \ref{easy ODEs SO(3)xId_2}, as $t>0$. Analogously, one can see that the derivative with respect to $\tau$ of $F$ is equivalent to the last equation of Corollary \ref{easy ODEs SO(3)xId_2} if we assume that $s/t$ is constant.
\end{proof}

Setting 
$$v:=\frac{s}{t}, \hspace{25pt} u:=st,$$ the preserved quantities transform to:
\begin{align*}
\beta,\hspace{30pt} \delta,\hspace{30pt} v,\hspace{30pt} F:= 2\sin^{5/2} \alpha\cos^{1/2} \alpha(v^2+1)u+5cvH(\alpha), 
\end{align*}
where we multiplied $F$ by the constant $(v^2+1)$.
Observe that this is an admissible transformation from $s,t\in (0,\infty)$ to $u,v\in(0,\infty)$. Moreover, fixed $\beta,\delta,v$, we can represent the $\SU(2)$-invariant Cayley submanifolds as the level sets of $F$ reckoned as a $\R$-valued function of $\alpha$ and $u$.  An easy analysis of $F$ shows that these level sets can be represented as in Figure \ref{Figure of Cayley fibration SO(3)xId_2}. 
\begin{figure}
    \centering
   \begin{subfigure}{0.48\textwidth}\centering\includegraphics[scale=0.37]{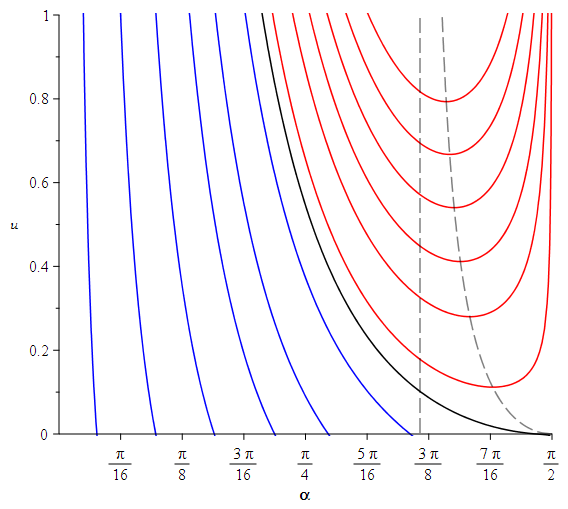}\caption{Level sets of F with c=1 and v=1}%
    \end{subfigure}
    \begin{subfigure}{0.48\textwidth}\centering\includegraphics[scale=0.37]{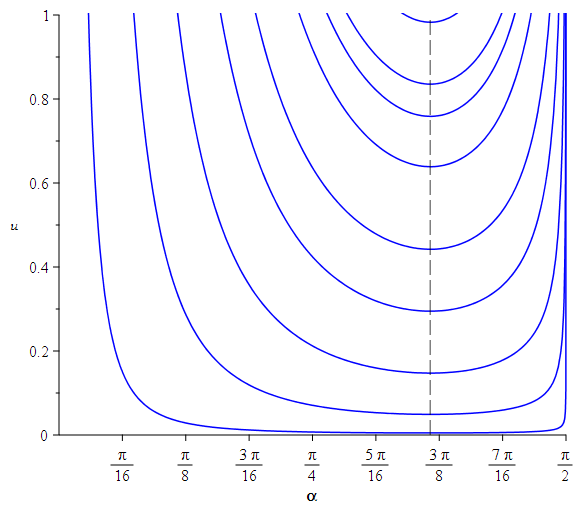}\caption{Level sets of F with c=0 and v=1}
    \end{subfigure}
    \caption{Level sets of $F$ in the generic and in the conical case}%
    \label{Figure of Cayley fibration SO(3)xId_2}%
\end{figure}
%export svg on maple and then convert it.
The dashed lines in the two graphs correspond to the curves formed by the $u$-minimums of each level set and to the two vertical lines: $\alpha=\arccos(1/\sqrt{6})$. For $c=0$, these coincide, while in the generic case the locus of the $u$-minimum is:
\[
\alpha=\arccos\left(\sqrt{\frac{u(v^2+1)}{6u(v^2+1)+5cv}}\right),
\]
which is only asymptotic to $\alpha=\arccos(1/\sqrt{6})$ for $u\to\infty$.

\subsection*{The conical version.} We first consider the easier case, i.e. when $c=0$. It is clear from the graph that the $\SU(2)$-invariant Cayleys passing through $\mathcal{U}$ are  contained in $\mathcal{U}$, have topology $S^3\times \R$ and are smooth. Moreover, we can construct a Cayley fibration on the chart $\mathcal{U}$ with base an open subset of $\R^4$. To do so, we associate to each point of $\mathcal{U}$ the value of $\beta$, $\delta$, $s/t$ and $F$ of the Cayley passing through that point. This $\SU(2)$-invariant fibration naturally extends to the whole $M_0$ via continuity. Using Table \ref{Table Orbits SO(3)xId_2} and Harvey and Lawson uniqueness theorem \cite[Theorem IV.4.3]{HL82}, we can describe the extension precisely. Indeed, when $\alpha=\pi/2$, the fibres of $\pi_{S^4}$ are $\SU(2)$-invariant Cayley submanifolds; when $\alpha\neq\pi/2$ and $s=0$ or $t=0$, the suitable Cayley submanifolds constructed by Karigiannis and Min-Oo \cite{KM05} are $\SU(2)$-invariant; finally, when $\alpha=0$ and $(s,t)\neq 0$, the fibres are given by an extension of \cite{KLe12}. The topology of these Cayley submanifolds that are not contained in $\mathcal{U}$ is $\R^4\setminus \{0\}$ in the first case and $\R\times S^3$ in the remaining ones. Observe that this fibration does not admit singular or intersecting fibres.
%Topologically, in the first case the Cayley submanifolds are $\R^4\setminus \{0\}$ and, in the remaining cases, are $\R^{+}\times S^3$. Observe that the fibres of this fibration are not singular and do not intersect.  

\subsection*{The smooth version.}  Now, we consider the generic case, i.e. when $c>0$. Differently from the cone, the graph of the level sets of $F$ shows that the $\SU(2)$-invariant Cayley submanifolds passing through $\mathcal{U}$ do not remain contained in it, and they admit three different topologies in the extension. The red, black and blue lines correspond to submanifolds with topology $\R\times S^3$, $\R^4$ and $\mathcal{O}_{\C\P^1} (-1)$, respectively. %The red lines correspond to submanifolds with topology $\R\times S^3$, the black one to $\R^4$ and the blue ones to $\mathcal{O}_{\C\P^1} (-1)$. 
Indeed, the first two cases are obvious, while, if we assume smoothness, we can deduce the third one through the argument of Subsection \ref{subsection cayley submanifolds and cayley fibration}. We define an $\SU(2)$-invariant Cayley fibration on $\mathcal{U}$ that extends to the whole $M$ exactly as above. If we fix a value of $F$ corresponding to a Cayley of topology $\mathcal{O}_{\C\P^1} (-1)$, then, for every $\delta$, $v$, all the different Cayleys will intersect in a $\C\P^1\subset S^4$, where $S^4$ is the zero section of $\dS_{\!-}(S^4)$. In particular, the $M'$ of Definition \ref{Cayley fibration definition} is equal to $M_0$ in this context, i.e., we can assume $u>0$.

\subsection*{The parametrizing space.} Using Figure \ref{Figure of Cayley fibration SO(3)xId_2}, we can study the parametrizing space $\mathcal{B}$ of the Cayley fibrations we have just described. We will only deal with the smooth version, as the conical case is going to be completely analogous. 

Ignoring $\beta$ for a moment, it is immediate to see that, if we restrict our attention to the fibres that are topologically $\mathcal{O}_{\C\P^1}{(-1)}$ and the ones corresponding to the black line, the parametrizing space is homeomorphic to $S^2\times [0,1]$. The remaining fibres are parametrized by $B^3 (1)$, open unit ball of $\R^3$. As we removed the zero section of $\dS_{\!-}(S^4)$, it is clear that we can glue these partial parametrizations together to obtain $\overline{B^3(2)}$. Now, $\beta$ gives a circle action on $\overline{B^3(2)}$ that vanishes on its boundary. We conclude that the parametrizing space $\mathcal{B}$ of the smooth Cayley fibration is $S^4$. Indeed, this is essentially the same way to describe $S^4$ as we did in Subsection \ref{Subsection Frame S^4 SO(3)xId_2}. 

\subsection*{The smoothness of the fibres (the asymptotic analysis).} In this subsection, we study the smoothness of the fibres. Observe that this property is obviously satisfied as long as they are contained in the chart $\mathcal{U}$. Hence, the Cayleys of topology $S^3\times \R$ are smooth, and we only need to check the remaining ones in the points where they meet the zero section, i.e., when the $\SU(2)$ group action degenerates. To this purpose, we carry out an asymptotic analysis. 

Let $\beta_0,v_0,\delta_0$ and $F_0$ be the constants determining a Cayley fibre $N$. By the explicit formula for $F$, we see that $N$ is given by:
\begin{align*}
	u=\frac{F_0-5cv_0H(\alpha)}{2\sin^{5/2}\alpha\cos^{1/2}\alpha(v_0^2+1)}
\end{align*}

We first check the smoothness of the fibres that meet the zero section ($u=0$) at some $\alpha_0\in(0,\pi/2)$, i.e., the ones of topology $\mathcal{O}_{\C\P^1}(-1)$. For this purpose, if we expand near $\alpha_0^-$ and we obtain the linear approximation of $N$ at that point. Explicitly, this is the $\SU(2)$-invariant $4$-dimensional submanifold $\Sigma$ characterized by the equation
\[
u=-\frac{5cv_0}{2\tan\alpha_0(v_0^2+1)}(\alpha-\alpha_0),
\]
and where $v,\delta, \beta$ are constantly equal to $v_0,\delta_0,\beta_0$.

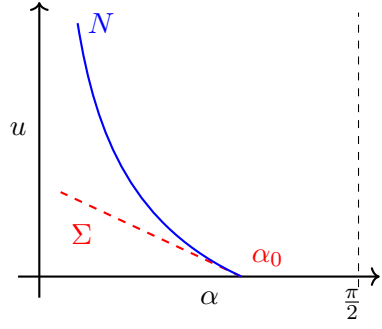
\begin{figure}
\begin{tikzpicture}[scale=0.28]

\draw[->,thick] (-1,0)--(16,0);
\draw[->,thick] (0,-1)--(0,13); 
\draw[-,dashed] (15,-0.5)--(15,12.5); 
\draw[red,thick, dashed] (9.5,0)--(1,4);

\draw[blue, thick] (1.8,12) to  [out=279,in=155] (9.5,0);

\node at (8,-1) [black]{$\alpha$};
\node at (-1,7) [black]{$u$};
\node at (14.7,-1.3) [black]{$\frac{\pi}{2}$};
\node at (9.5,0) [above right,red]{$\alpha_0$};
\node at (1.8,12) [right,blue]{$N$};
\node at (2,2) [red]{$\Sigma$};
\end{tikzpicture}
\caption{Approximation of a Cayley at $u=0$ when $\alpha_0\in(0,\pi/2)$}\label{Figure linear approximation SO(3)xId_2}
\end{figure}

Now, we want to study the asymptotic behaviour of the metric $g_c$ when restricted to $\Sigma$, and then, we let $\alpha$ tends to $\alpha_0$ from the left. To do so, it is convenient to compute the following identities using the definition of $u:=st$ and $v:=s/t$:
\begin{equation}\label{transformation ds du SO(3)xId_2}
\begin{aligned}
dt&=\frac{1}{2\sqrt{uv}}du-\frac{1}{2v}\sqrt{\frac{u}{v}}dv,\\
ds&=\frac{\sqrt{v}}{2\sqrt{u}}du+\frac{\sqrt{u}}{2\sqrt{v}}dv,\\
ds^2&=\frac{v}{4u}du^2+\frac{u}{4v}dv^2+\frac{1}{2}dudv,\\
dt^2&=\frac{1}{4uv}du^2 +\frac{u}{4v^3}dv^2-\frac{1}{2v^2}dudv.\\
\end{aligned}
\end{equation}
The metric $g_c$, in the coframe $\{\sigma_1, \sigma_2,\sigma_3, d\alpha,d\beta,du,dv,d\delta\}$, then can be rewritten as:
\begin{equation} \label{metric for asymptotic/singular geometry SO(3)xId_2} 
\begin{aligned}
g_c=&5\left(c+ \frac{u}{v}(1+v^2)\right)^{3/5}\left(d\alpha^2 +\sin^2\alpha d\beta^2 +\cos^2\alpha(\sigma^2_2+\sigma_3^2)\right) \\
&+4\left(c+ \frac{u}{v}(1+v^2)\right)^{-2/5} \bigg(\frac{1}{4uv}(1+v^2)du^2+\frac{u}{4v^3}(1+v^2)dv^2+\frac{1}{2v^2}(v^2-1)dudv \\
&+ \frac{u}{v}(1+v^2)\frac{\cos^2\alpha}{4}d\beta^2+ \frac{u}{4v}(1+v^2)\sigma_1^2-\frac{\cos\alpha}{2} \frac{u}{v}(1+v^2)d\beta\sigma_1+ \frac{u}{v}(1+v^2)\frac{\sin^2\alpha}{4}(\sigma_2^2+\sigma_3^2)\\
&+\frac{u(1-v^2)}{2v}d\delta\sigma_1+u\sin\alpha d\delta\sigma_3+ \frac{u}{4v}(1+v^2) d\delta^2+\sin\alpha\frac{u}{v}dv\sigma_2-\frac{u(1-v^2)\cos\alpha}{2v}d\delta d\beta\!\bigg),
\end{aligned}
\end{equation}
where we used (\ref{transformation ds du SO(3)xId_2}) and Lemma \ref{g_c invariant SO(3)xId_2}. Now, if we restrict (\ref{metric for asymptotic/singular geometry SO(3)xId_2}) to $\Sigma$, and we let $\alpha$ tend to $\alpha_0$ from the left, we get: 
\begin{align*}
\restr{g_c}{N}&\sim \frac{c^{-2/5}}{v_0}(1+v_0^2)\left(\frac{du^2}{u}+u\sigma_1^2\right)+5c^{3/5} \cos^2\alpha_0 (\sigma_2^2+\sigma_3^2)\\
&\sim dr^2+r^2 \frac{\sigma_1^2}{4}+5c^{3/5} \cos^2\alpha_0 (\sigma_2^2+\sigma_3^2),
\end{align*}
where 
\[
r=\sqrt{\frac{1+v_0^2}{v_0 c^{2/5}}} 2\sqrt{u}.
\]
As the length of $\sigma_1$ is $4\pi$, we deduce that the metric $g_c$ extends smoothly to the $\C\P^1 \cong S^2$ contained in the zero section. This two-dimensional sphere corresponds to the base of the bundle $\mathcal{O}_{\C\P^1}(-1)$.

Finally, we check the smoothness of the fibres meeting the zero section at $\alpha_0=\pi/2$, i.e., the ones with topology $\R^4$. Expanding for $\alpha\to\pi/2^-$, we immediately see that the first order is not enough and we need to pass to second order. Explicitly, this is the $\SU(2)$-invariant $4$-dimensional submanifold $\Sigma$ of equation:
\[
u=A(\alpha-\pi/2)^2,
\]
where $A:={cv}({1+v^2})^{-1}$ is the constant depending on $c,v$ determined by the expansion. As above, the remaining parameters $v,\delta, \beta$ are constantly equal to $v_0,\delta_0,\beta_0$.
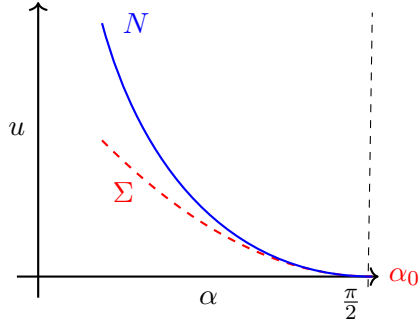
\begin{figure}
\begin{tikzpicture}[scale=0.28]
\draw[->,thick] (-1,0)--(16,0);
\draw[->,thick] (0,-1)--(0,13); 
\draw[-,dashed] (15.5,-0.5)--(15.7,12.5); 
\draw [red, thick, dashed, domain=3:15.7] plot (\x, {0.04*(\x-15.7)^2});
\draw[blue, thick] (3,12) to  [out=285,in=180] (15.7,0);
\node at (8,-1) [black]{$\alpha$};
\node at (-1,7) [black]{$u$};
\node at (14.7,-1.3) [black]{$\frac{\pi}{2}$};
\node at (16,0) [right,red]{$\alpha_0$};
\node at (3.5,12) [right,blue]{$N$};
\node at (4,4) [red]{$\Sigma$};
\end{tikzpicture}
\caption{Approximation of a Cayley at $u=0$ when $\alpha_0=\pi/2$}\label{Figure quadratic approximation SO(3)xId_2}
\end{figure}
If we restrict $g_c$ as defined in $(\ref{metric for asymptotic/singular geometry SO(3)xId_2})$ to $\Sigma$, and we let $\alpha$ tend to $\pi/2$, then, we obtain: 
\begin{equation*}
\begin{aligned}
\restr{g_c}{N}&\sim\begin{aligned}[t] &5c^{3/5} (\alpha-\pi/2)^2 (\sigma_2^2+\sigma_3^2)+A c^{-2/5}\left(\frac{1+v^2}{v}\right)(\alpha-\pi/2)^2 (\sigma_1^2+\sigma_2^2+\sigma_3^2)\\
&+\left(5c^{3/5}+4Ac^{-2/5}\frac{1+v^2}{v}\right)d\alpha^2\end{aligned}\\
&\sim c^{3/5} (\alpha-\alpha_0)^2 \left(\sigma_1^2+6(\sigma_2^2+\sigma_3^2)\right)+9c^{3/5}d\alpha^2,
\end{aligned}
\end{equation*}
where we also used the expansion of $\cos\alpha$ around $\pi/2$ and the explicit value of $A$. We conclude that $N$ is not smooth when it meets the zero section, and it develops an asymptotically conical singularity at that point. 

\begin{remark}
The singularity is asymptotic to the Lawson--Osserman cone \cite{LO77}.
\end{remark}

\subsection*{The main theorems} We collect all these results in the following theorems. Observe that we are using the notion of Cayley fibration given in Definition $\ref{Cayley fibration definition}$.
\begin{theorem}[Generic case]\label{Main theorem Cayley fibration generic SO(3)xId_2}
Let $(M,\Phi_c)$ be the Bryant--Salamon manifold constructed over the round sphere $S^4$ for some $c>0$, and let $\SU(2)$ act on $M$ as in Subsection \ref{Subsection Spin(3) action SO(3)xId_2}. Then, $M$ admits an $\SU(2)$-invariant Cayley fibration parametrized by $\mathcal{B}\cong S^4$. The fibres are topologically $\mathcal{O}_{\C\P^1} (-1)$, $S^3\times \R$ and $\R^4$. Apart from the non-vertical fibres of topology $\R^4$, all the others are smooth. The singular fibres of the Cayley fibration have a conically singular point and are parametrized by $(\mathcal{B}^{\circ})^c \cong S^2\times S^1$ ($\beta,\delta, v$ in our description). Moreover, at each point of the zero section $S^4\subset\dS_{\!-}(S^4)$, infinitely many Cayley fibres intersect.
\end{theorem}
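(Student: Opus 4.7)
The plan is to bolt together four ingredients already assembled in the preceding subsections, then read off the parametrising space and the intersection pattern. First, on the open dense chart $\mathcal{U}$, Corollary~\ref{easy ODEs SO(3)xId_2} reduces the $\SU(2)$-invariant Cayley condition to $\dot\beta=\dot\delta=0$, $t\dot s-s\dot t=0$, and a single quadrature. Integrating yields the four conserved quantities $\beta,\delta,v=s/t,F$; sending a point of $\mathcal{U}$ to these values defines a smooth fibration whose fibres are the $\SU(2)$-invariant Cayleys through each generic orbit. Harvey--Lawson's local existence and uniqueness theorem \cite[Theorem IV.4.3]{HL82} confirms that this list is exhaustive among $\SU(2)$-invariant Cayleys meeting $\mathcal{U}$.

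The second step is to extend the fibration to all of $M$ by continuity, treating separately the strata on which the $\SU(2)$-action degenerates (Lemma~\ref{Geometry of Spin(3) orbits SO(3)xId_2}). On $\{\alpha=\pi/2\}$ the $\SU(2)$ fixes each $\pi_{S^4}$-fibre, so the vertical $\R^4$-fibres are themselves $\SU(2)$-invariant Cayleys; on $\{\alpha\neq\pi/2,\,s=0\}\cup\{\alpha\neq\pi/2,\,t=0\}$ the $\SU(2)$-invariant Cayleys of Karigiannis--Min-Oo \cite{KM05} supply the missing pieces; and on $\{\alpha=0,(s,t)\neq 0\}$ the extension of \cite{KLe12} does the same. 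The three sheets patch continuously with the level sets of $F$, producing the desired fibration of $M$. The topological types are then read off from the level curves of $F$ in Figure~\ref{Figure of Cayley fibration SO(3)xId_2}: a curve confined to $\mathcal{U}$ is a principal $\SU(2)$-bundle over $\R$, hence $S^3\times\R$; one hitting $u=0$ at some $\alpha_0\in(0,\pi/2)$ collapses a $\C\P^1\subset S^4$ and, since $N\setminus\C\P^1\cong\R^+\times S^3$, the classification recalled in Subsection~\ref{subsection cayley submanifolds and cayley fibration} forces $N\cong\mathcal{O}_{\C\P^1}(-1)$; a curve meeting $u=0$ only at $\alpha=\pi/2$ collapses the whole $\SU(2)$-orbit to a point and is diffeomorphic to $\R^4$.

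Third, smoothness and the singular geometry follow by linearising near each boundary point and pulling back the metric~\eqref{metric for asymptotic/singular geometry SO(3)xId_2}. At an interior root $\alpha_0\in(0,\pi/2)$ the first-order expansion $u\sim C(\alpha_0-\alpha)$ gives, in the limit, a flat $\R^2$-factor glued to a round $\C\P^1$, so the fibre is smooth across its zero section. At $\alpha_0=\pi/2$ the first-order coefficient vanishes and one must use $u\sim A(\alpha-\pi/2)^2$ with $A=cv/(1+v^2)$; substituting into~\eqref{metric for asymptotic/singular geometry SO(3)xId_2} produces a squashed-$S^3$ cone identifiable with the Lawson--Osserman cone, so the fibre is conically singular at the collision point. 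The non-vertical $\R^4$-fibres remain inside $\mathcal{U}$ and are therefore automatically smooth. For the parametrising space, fixing $\beta$ the three topological types fill a closed $3$-ball $\overline{B^3(2)}$ whose boundary is precisely the locus of singular $\R^4$-fibres; reintroducing $\beta$ as a circle action that collapses on this boundary (mirroring the construction of $S^4$ in Subsection~\ref{Subsection Frame S^4 SO(3)xId_2}) identifies $\mathcal{B}\cong S^4$ and $(\mathcal{B}^{\circ})^c\cong S^2\times S^1$. The infinite intersections at each point of the zero section are inherent to the $\mathcal{O}_{\C\P^1}(-1)$ regime: for any fixed $\C\P^1\subset S^4$, every choice of $(\beta,\delta,v)$ produces an $\mathcal{O}_{\C\P^1}(-1)$-fibre sharing that $\C\P^1$.

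The main technical obstacle is the second-order asymptotic analysis at $\alpha_0=\pi/2$, where the cancellation of lower-order terms in the restriction of $g_c$ is needed to pin down the Lawson--Osserman cone as the precise asymptotic model; the balance between the constants $5c^{3/5}$ and $4Ac^{-2/5}(1+v^2)/v$ in the restricted metric is what forces the squashed rather than the round $S^3$ link.
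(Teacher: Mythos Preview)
Your outline follows the paper's own argument essentially step for step: reduce to conserved quantities on $\mathcal{U}$, extend by continuity over the degenerate strata, read off topology from the level curves of $F$, and use the asymptotic expansions of $g_c$ near $u=0$ to decide smoothness. That part is fine.

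There is, however, a genuine slip in your smoothness discussion that makes the third paragraph internally inconsistent. You correctly carry out the second-order expansion at $\alpha_0=\pi/2$ and conclude that the fibre there is conically singular with Lawson--Osserman link. But the fibres to which this analysis applies are precisely the \emph{non-vertical} $\R^4$-fibres: they are the level curves of $F$ that run down to $u=0$ at $\alpha=\pi/2$, and $\alpha=\pi/2$ lies outside $\mathcal{U}$. Your next sentence, ``The non-vertical $\R^4$-fibres remain inside $\mathcal{U}$ and are therefore automatically smooth,'' is therefore false on both counts and contradicts what you just proved. It is the $S^3\times\R$ fibres that remain in $\mathcal{U}$ and are automatically smooth; the vertical $\R^4$-fibres (the $\pi_{S^4}$-fibres over $\alpha=\pi/2$) are smooth for separate reasons; and the non-vertical $\R^4$-fibres are exactly the singular ones, as the theorem asserts. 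You should also revisit your description of the parametrising space: you claim the boundary of $\overline{B^3(2)}$ is the locus of singular $\R^4$-fibres, but in the paper's gluing the $\beta$-circle collapses on the boundary because that boundary corresponds to the vertical fibres over $\alpha=\pi/2$, not to the singular family; the singular fibres sit as an interior $S^2$ separating the $\mathcal{O}_{\C\P^1}(-1)$ region from the $S^3\times\R$ region.
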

\begin{theorem}[Conical case]\label{Main theorem Cayley fibration conical SO(3)xId_2}
Let $(M_0,\Phi_0)$ be the conical Bryant--Salamon manifold constructed over the round sphere $S^4$, and let $\SU(2)$ act on $M_0$ as in Subsection \ref{Subsection Spin(3) action SO(3)xId_2}. Then, $M_0$ admits an $\SU(2)$-invariant Cayley fibration parametrized by $\mathcal{B}\cong S^4$. The fibres are topologically $S^3\times\R$ and are all smooth. Moreover, as these do not intersect, the $\SU(2)$-invariant Cayley fibration is a fibration in the usual differential geometric sense with fibres Cayley submanifolds. 
\end{theorem}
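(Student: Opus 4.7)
The plan is to specialize the analysis carried out in the preceding subsections to the case $c=0$ and read off the stronger conclusions available in the conical setting. By Corollary \ref{easy ODEs SO(3)xId_2}, an $\SU(2)$-invariant submanifold of $(M_0, \Phi_0)$ is Cayley in $\mathcal{U}$ if and only if $\beta$, $\delta$, $v = s/t$, and
$$
F = 2\sin^{5/2}\alpha\,\cos^{1/2}\alpha\,(v^2+1)\,u
$$
are constant, since the $H(\alpha)$-term in $F$ vanishes when $c=0$. Solving $F=F_0$ for $u$ on $\alpha\in(0,\pi/2)$ gives
$$
u = \frac{F_0}{2(v_0^2+1)\sin^{5/2}\alpha\,\cos^{1/2}\alpha},
$$
which is a smooth positive function blowing up at both endpoints of the interval. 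Combined with the three-dimensional $\SU(2)$-orbit structure, each choice of $(\beta_0, \delta_0, v_0, F_0)$ thus determines a smooth $\SU(2)$-invariant Cayley inside $\mathcal{U}$ diffeomorphic to $\R \times S^3$; see Figure \ref{Figure of Cayley fibration SO(3)xId_2}(b).

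Next I would extend this family of fibres continuously to $M_0 \setminus \mathcal{U}$. By Lemma \ref{Geometry of Spin(3) orbits SO(3)xId_2}, since the zero section is excluded in $M_0$, the only degenerate locus consists of lower-dimensional $\SU(2)$-orbits sitting at $\alpha = \pi/2$ or on the two totally geodesic $S^2$ corresponding to $\theta\in\{0,\pi\}$ and $\alpha = 0$. Following the strategy already outlined in the conical discussion of Section \ref{Section SO(3)xId_2}, I would invoke Harvey and Lawson's local uniqueness theorem \cite[Theorem IV.4.3]{HL82} together with the explicit $\SU(2)$-invariant Cayley constructions of \cite{KLe12} and \cite{KM05} to identify each extended fibre with a smooth Cayley. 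Crucially, without the $H(\alpha)$ term, no level curve of $F$ in the $(\alpha,u)$-plane meets the zero section, so none of the obstructions appearing in the generic case (such as the Lawson--Osserman-type singularities at $\alpha=\pi/2$) arise here; every fibre remains diffeomorphic to $\R \times S^3$.

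To identify the parameter space $\mathcal{B}$ with $S^4$, I would parallel the argument from the generic case: first ignore $\beta$, note that $(\delta, v, F)$ parametrizes an open $3$-ball (with the degenerate orbits filling in the boundary $S^2$), and then use the $\beta$-circle action collapsing on that boundary to produce $S^4$, exactly as in the description of $S^4$ in Subsection \ref{Subsection Frame S^4 SO(3)xId_2}. For disjointness, I would observe that the level curves of $F$ in the $(\alpha,u)$-plane never intersect (the function $F$ is strictly monotone transversally to them), and that distinct values of $(\beta,\delta,v)$ correspond to disjoint $\SU(2)$-orbits; the continuous extension preserves these disjointness statements at the degenerate locus. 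Consequently $M' = M_0$ in Definition \ref{Cayley fibration definition}, and the fibration is smooth in the ordinary differential-geometric sense.

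The only technical step is the asymptotic analysis near the degenerate loci, but it is considerably simpler than in the generic case because setting $c = 0$ eliminates the $H(\alpha)$ contribution that was responsible both for the fibres of topology $\mathcal{O}_{\C\P^1}(-1)$ and for the conically singular fibres. In particular, I would compute the leading-order expansion of the metric $g_0$ from Proposition \ref{Proposition Diagonal metric SO(3)xId_2} restricted to each extended fibre and verify smooth closure at $\alpha \in \{0, \pi/2\}$ and at $\theta \in \{0,\pi\}$; this is a routine specialization of the expansions already performed in the generic case, so the main expected \emph{obstacle} is purely bookkeeping rather than genuinely new analysis.
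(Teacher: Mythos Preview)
Your approach is correct and essentially the same as the paper's: specialize $F$ to $c=0$, read off from the level curves that the fibres through $\mathcal{U}$ are smooth $S^3\times\R$, then extend the fibration to $M_0\setminus\mathcal{U}$ by invoking Harvey--Lawson uniqueness together with the explicit constructions of \cite{KM05} and \cite{KLe12}, and finally identify $\mathcal{B}\cong S^4$ by the same collapse argument as in the generic case.

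One small correction: your final paragraph is superfluous. Because $u = F_0\big/\big(2(v_0^2+1)\sin^{5/2}\alpha\cos^{1/2}\alpha\big)\to\infty$ at both endpoints, the fibres through $\mathcal{U}$ are already complete inside $\mathcal{U}$ and never approach the zero section; there is nothing to ``close up'' at $\alpha\in\{0,\pi/2\}$ and hence no asymptotic metric expansion to perform. The fibres that do live over $\alpha=0$, $\alpha=\pi/2$, or $s\,t=0$ are \emph{new} fibres added in the extension, and their smoothness comes directly from the cited constructions (the vertical $\pi_{S^4}$-fibres over $\alpha=\pi/2$, Karigiannis--Min-Oo for $s=0$ or $t=0$, and the extension of \cite{KLe12} for $\alpha=0$), not from an expansion of $g_0$. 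The paper accordingly performs no asymptotic analysis in the conical case.
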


\begin{remark}
It is interesting to observe that, in the generic case, the family of singular $\R^4$s separates the fibres of topology $S^3\times \R$ from the ones of topology $\mathcal{O}_{\C\P^1}(-1)$. 
\end{remark}

\begin{remark}
Similarly to \cite[Subsection 5.11.1]{KLo21}, one can blow-up at the north pole and argue that in the limit the Cayley fibration splits into the product of a line $\R$ and of an $\SU(2)$-invariant coassociative fibration on $\R^7$. By the uniqueness of the $\SU(2)$-invariant coassociative fibrations of $\R^7$, we deduce that the latter is the Harvey and Lawson coassociative fibration \cite[Section IV.3]{HL82} up to a reparametrization.
\end{remark}

\begin{remark}
	From the computations that we have carried out, it is easy to give an explicit formula for the multi-moment map $\nu_c$ associated to this action. Indeed, this is:\begin{align*}
\nu_c=&5(c+s^2+t^2)^{1/5} \left((s^2+t^2)\cos^2\alpha-\frac{1}{6}(s^2+t^2-5c)\right)-\frac{25}{6}c^{6/5} \hspace{15pt} c\geq 0.
\end{align*}
Obviously, the range of $\nu_c$ is the whole $\R$.
  Under the usual transformation $u=st$ and $v=s/t$, the multi-moment map becomes: 
\begin{align*}
\nu_c=&\frac{5}{6}\left(c+\frac{u(1+v^2)}{v}\right)^{1/5} \left(6\frac{u(1+v^2)}{v}\cos^2\alpha-\frac{u(1+v^2)}{v}+5c\right)-\frac{25}{6}c^{6/5}.
\end{align*}
We draw the level sets of $\nu_c$ in Figure \ref{multi-moment map SO(3)xId_2}.

The black lines correspond to the level set relative to zero, the red lines correspond to negative values, while the blue lines correspond to the positive ones.

Differently from the conical case, the $0$-level set of $\nu_c$ for $c>0$ does not coincide with the locus of $u$-minimum of each level set of $F$. Moreover, for every $c\geq0$, it does not even coincide with the set of $\SU(2)$-orbits of minimum volume in each fibre.
\end{remark}

\begin{figure}
    \centering
    \begin{subfigure}{0.48\textwidth}\centering\includegraphics[scale=0.35]{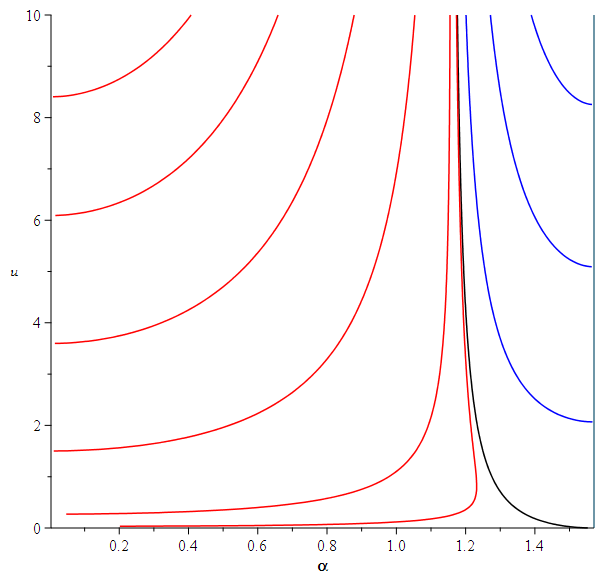} \caption{Level sets of $\nu_1$ with $v=1$}
    \end{subfigure}
    \begin{subfigure}{0.48\textwidth}\centering\includegraphics[scale=0.35]{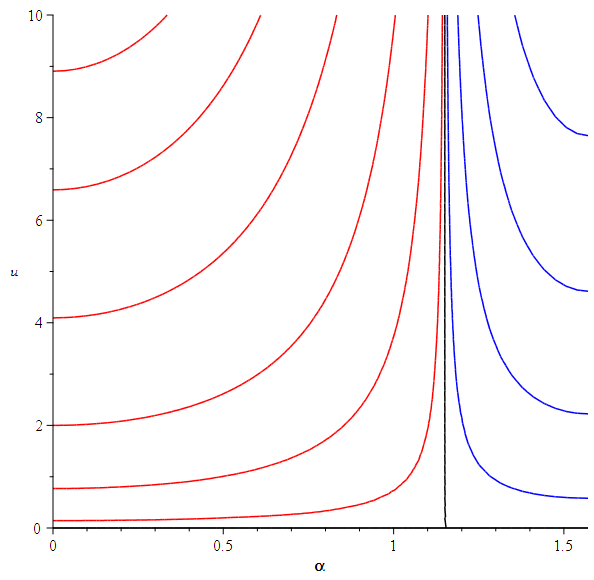}\caption{Level sets of $\nu_0$ with $v=1$}
    \end{subfigure}
    \caption{Level sets of the multi-moment map in the generic and conical case}%
    \label{multi-moment map SO(3)xId_2}%
\end{figure}

\subsection*{Asymptotic geometry.} Inspecting the geometry of the Cayley fibration (see Figure \ref{Figure of Cayley fibration SO(3)xId_2}), we deduce that there are two asymptotic behaviours for the fibres: one for $\alpha\sim0$ and one for $\alpha\sim\pi/2$. In both cases, as $u\to\infty$, the tangent space of the Cayley fibre $N$ tends to be spanned by $\partial_u, \partial_1,\partial_2,\partial_3$. We can use the formula for the metric (\ref{metric for asymptotic/singular geometry SO(3)xId_2}) to obtain, for $\alpha\sim0$: 
\begin{align*}
\restr{g_c}{N}&\sim 5\left(\frac{1+v^2}{v}\right)^{3/5} u^{3/5} (\sigma_2^2+\sigma_3^2)+u^{-2/5}\left(\frac{1+v^2}{v}\right)^{-2/5} \left(\frac{1+v^2}{v}\right) \left(\frac{du^2}{u}+u\sigma_1^2\right)\\
&=\left(\frac{1+v^2}{v}\right)^{3/5} \left(u^{3/5}(5(\sigma_2^2+\sigma_3^2)+\sigma_1^2)+u^{-7/5} du^2 \right)\\
&=dr^2 +\frac{9}{25}r^2\frac{(\sigma_1^2+5(\sigma_2^2+\sigma_3^2))}{4},
\end{align*}
and, for $\alpha\sim\pi/2$: 
\begin{align*}
\restr{g_c}{N}&\sim \left(\frac{1+v^2}{v}\right)^{3/5}\left(u^{3/5} (\sigma_1^2+\sigma_2^2+\sigma_3^2)+u^{-7/5}du^2\right)\\
&=dr^2+\frac{9}{25}r^2\frac{\left(\sigma_1^2+\sigma_2^2+\sigma_3^2\right)}{4},
\end{align*}
where, in both cases,
\[
r:=\frac{10}{3}\left(\frac{1+v^2}{v}\right)^{3/10}u^{3/10}.
\]
When $\alpha\sim\pi/2$, the link $S^3$ is endowed with the round metric, while, when $\alpha\sim0$, the round sphere is squashed by a factor $1/5$. 
\begin{remark}
Observe that $1/5$ is also the squashing factor on the round metric of $S^7$ that makes the space homogeneous, non-round and Einstein. It is well-known that there are no other metrics satisfying these properties \cite{Zil82}. %See Besse 9.82 example 2, g=g_H+ t_0 g_V H,V vertical and horizontal space 
\end{remark}

\section{The Cayley fibration invariant under the lift of the \texorpdfstring{$\Sp(1)\times\Id_1$}{Lg} action on \texorpdfstring{$S^4$}{Lg}} \label{Section Sp(1)xId_1}

Let $M:=\dS_{\!-}(S^4)$ and $M_0:=\R^{+}\times S^7$ be endowed with the torsion-free $\Spin(7)$-structures $\Phi_c$ constructed by Bryant and Salamon that we described in Section \ref{Section BS}. On each $\Spin(7)$ manifold, we construct the Cayley Fibration which is invariant under the lift to $M$ (or $M_0$) of the standard (left multiplication) $\Sp(1)\times\Id_1$ action on $S^4\subset \H\oplus\R$.

\begin{remark}
The exact same computations will work for the $\Sp(1)\times\Id_1$ action given by right multiplication of the quaternionic conjugate. In this case, the role of the north and of the south pole will be interchanged.
\end{remark}

\subsection{The choice of coframe on \texorpdfstring{$S^4$}{Lg}}\label{Subsection Frame S^4 Sp(1)xId_1}  As in Section \ref{Section SO(3)xId_2}, we choose an adapted orthonormal coframe on $S^4$ which is compatible with the symmetries we will impose. 

Consider $\R^5$ as the sum of a $4$-dimensional space $P\cong \H$ and its orthogonal complement $P^{\perp}\cong \R$. With respect to this splitting, we can write the $4$-dimensional unit sphere in the  following fashion: 
\begin{align*}
S^4=\left\{(\textbf{x},y)\in P\oplus P^{\perp} : \av{\textbf{x}}^2+\av{{y}}^2=1\right\}.
\end{align*}

Now, for all $(\textbf{x},y)\in S^4$ there exists a unique $\alpha\in[-\pi/2,\pi/2]$ such that 

\begin{align*}
\textbf{x}=\cos \alpha \textbf{u}, \hspace{20pt} {y}=\sin \alpha,
\end{align*}
for some $\textbf{u}\in S^3$. Note that $\textbf{u}$ is uniquely determined when $\alpha\neq\pm \pi/2$. Essentially, we are writing $S^4$ as a $1$-parameter family of $S^3$s that are collapsing to a point on each end of the parametrization. 

Let $\{\partial_1, \partial_2, \partial_3\}$ be the standard left-invariant orthonormal frame on $S^3\cong\Sp(1)$. Considering this frame in the description of $S^4$ above, we deduce that
\begin{align*}
f_0:=\partial_{\alpha}, \hspace{15pt}f_1:=\frac{\partial_{1}}{\cos\alpha},\hspace{15pt}f_2:=\frac{\partial_{2}}{\cos\alpha},\hspace{15pt}f_3:=\frac{\partial_{3}}{\cos\alpha},
\end{align*}
is an oriented orthonormal frame of $S^4\setminus \{\alpha=\pm\pi/2\}$. The dual coframe is: 
\begin{align}\label{frame S^4 Sp(1)xId_1}
b_0:=d\alpha;\hspace{15pt} b_1:=\cos\alpha \sigma_1;\hspace{15pt} b_2:=\cos\alpha \sigma_2; \hspace{15pt} b_3:=\cos\alpha\sigma_3,
\end{align}
where $\{\sigma_i\}_{i=1}^3$ is the dual coframe of $\{\partial_i\}_{i=1}^3$ in $S^3$, which is well-known to satisfy: 

\begin{align}\label{Maurer equations Sp(1)}
d 
\begin{pmatrix}
\sigma_1 \\ \sigma_2 \\ \sigma_3
\end{pmatrix}
=2\begin{pmatrix}
\sigma_2\wedge\sigma_3 \\ \sigma_3\wedge\sigma_1 \\ \sigma_1\wedge\sigma_2
\end{pmatrix}. 
\end{align}

We deduce that the round metric on the unit sphere $S^4$ can be written as: 
\begin{align*}
g_{S^4}=d\alpha^2+\cos^2\alpha g_{S^3},
\end{align*}
and the volume form is:
\begin{align*}
\vol_{S^4}=\cos^3\alpha d\alpha\wedge\vol_{S^3},
\end{align*}
where $g_{S^3}=\sigma_1^2+\sigma_2^2+\sigma_3^2$ and $\vol_{S^3}=\sigma_1\wedge\sigma_2\wedge\sigma_3$. 

\subsection{The horizontal and the vertical space}\label{The horizontal and the vertical space Sp(1)xId_1}
Exactly as in Subsection \ref{Subsection horizontal and vertical space SO(3)xId_2} we can compute the connection $1$-forms $\rho_i$ for $i=1,2,3$ with respect to the coframe we have constructed. Indeed, a straightforward computation involving (\ref{comprho}), (\ref{frame S^4 Sp(1)xId_1}) and (\ref{Maurer equations Sp(1)}) implies that $\rho_i=l \sigma_i$ for all $i=1,2,3$, where 
\begin{align*}
l:=\frac{\sin\alpha-1}{2}.
\end{align*}

Hence, we can deduce from (\ref{compVert1Forms}) that the vertical 1-forms in these coordinates are: 
\begin{equation}\label{Vertical1FormsinFrame Sp(1)xId_1}
\begin{aligned}
\xi_0&=da_0+l(a_1\sigma_1 + a_2\sigma_2+a_3\sigma_3 ), \hspace{20pt} &&\xi_1=da_1+l(-a_0\sigma_1 - a_2\sigma_3+a_3\sigma_2),\\
\xi_2&=da_2+l(-a_0\sigma_2+ a_1\sigma_3-a_3\sigma_1),  &&\xi_3=da_3+l(-a_0\sigma_3- a_1\sigma_2+a_2\sigma_1).
\end{aligned}
\end{equation}

\subsection{The \texorpdfstring{$\SU(2)$}{Lg} action} \label{Subsection Spin(3) action Sp(1)xId_1}

Given the splitting of $\R^5$ into $P\cong\H$ and its orthogonal complement $P^{\perp}$, we can consider $\SU(2)\cong\Sp(1)$ acting via left multiplication on $P$ and trivially on $P^{\perp}$. Equivalently, we are considering $\Sp(1)\cong \Sp(P) \times \Id_{P^{\perp}}\subset \SO(5)$. Being a subgroup of $\SO(5)$, the action descends to the unit sphere $S^4$. 

We first consider $\alpha\neq-\pi/2$, where we trivialize $S^4\setminus\{\textup{south pole}\}$ using homogeneous quaternionic coordinates on $\H\P^1\cong S^4$. In this chart, diffeomorphic to $\H$, the action is given by standard left multiplication. 

We extend the action on $S^4$ to the tangent bundle of $S^4$ via the differential. In this trivialization, $\H\times\H$, the action is given by left-multiplication on both factors. Hence, if we pick the trivialization of $P_{\SO(4)}$ induced by $\{1,i,j,k\}$, the action of $p\in\Sp(1)$ maps the element $(x,\Id_{\SO(4)})\in \H\times\SO(4)$ to $(p\cdot x,\tilde{p})$, where 

$$\tilde{p}=\left[
\begin{array}{c c c c}
p_0 &-p_1 &-p_2 &-p_3 \\ 

 p_1&  p_0 &-p_3 & p_2 \\ 
 p_2& p_3 & p_0 & -p_1 \\
 p_3 & -p_2 & p_1 & p_0
\end{array}
\right].$$

By the simply-connectedness of $\Sp(1)\cong\Spin(3)$, we  can lift the action to the spin structure $P_{\Spin(4)}$ of $S^4$. Using a similar diagram to (\ref{CDgloballifts SO(3)xId_2}) and the fact that the lift of $\tilde{p}$ is $(p,\Id_{\Sp(1)})\in \Sp(1)\times\Sp(1)$, we can show that in the trivialization of $P_{\Spin (4)}$, $\H\times\Sp(1)\times\Sp(1)$, the element $(x,(\Id_{\Sp(1)},\Id_{\Sp(1)}))$ is mapped to $(p\cdot x,(p,\Id_{\Sp(1)}))$.

 As in Section \ref{Section SO(3)xId_2}, this passes to the quotient space: $\dS_{\!-}(S^4)$, and, in the induced trivialization, $\H\times \H$, the action of $\Sp(1)$ is only given by left multiplication on the first factor by definition of $\mu_-$.

A similar argument works for the other chart of $\H\P^1$. However, the left multiplication becomes right multiplication of the conjugate, and the lift of the new $\tilde{p}$ is $(\Id_{\Sp(1)},p)$. It follows that $\Sp(1)$ acts on the fibre over the south pole as it acts on $\H$.

 In particular, we proved the following lemma.

\begin{lemma} \label{Geometry of Spin(3) orbits Sp(1)xId_1}
The orbits of the $\SU(2)$ action on $\dS_{\!-}(S^4)$ are given in Table \ref{Table Orbits Sp(1)xId_1}.
\end{lemma}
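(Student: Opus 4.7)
The plan is a case analysis based on the stratification of the base $S^4$ by the coordinate $\alpha$ of Subsection \ref{Subsection Frame S^4 Sp(1)xId_1}. Since $\Sp(1)$ acts freely by left multiplication on $S^3 \subset \H$, I will first record that the orbits of $\Sp(1)$ on $S^4 \cong \H\P^1$ are a point at the north pole ($\alpha = \pi/2$), a point at the south pole ($\alpha = -\pi/2$), and a copy of $S^3$ for every $\alpha \in (-\pi/2, \pi/2)$. The computation on $\dS_{\!-}(S^4)$ therefore reduces to three cases, in each of which I will identify the stabilizer of a generic fiber point.

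First I would handle the generic stratum $\alpha \in (-\pi/2, \pi/2)$. Here the base stabilizer is trivial, so the base orbit is a $3$-sphere. Using the description in Subsection \ref{Subsection Spin(3) action Sp(1)xId_1}, the lift of $p \in \Sp(1)$ to $P_{\Spin(4)}$ in the first trivialization sends $(x,(\Id,\Id))$ to $(p\cdot x,(p,\Id))$; rewriting back in the standard frame by the bundle equivalence in the associated bundle $P_{\Spin(4)}\times_{\mu_{-}}\H$ gives $\mu_{-}(p,\Id)a = a\bar{\Id} = a$, so the induced action on the fiber coordinate is trivial. Combined with the freeness of the base action, this implies that the orbit map $p \mapsto p\cdot x$ is injective into $\dS_{\!-}(S^4)$, so the orbit is diffeomorphic to $S^3$ for every fiber value $a$.

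Next I would address the two poles. At $\alpha = \pi/2$ the base point is fixed and the same trivialization computation shows that the fiber action is trivial; hence every point of the fiber is a fixed point and the orbit is a single point. At $\alpha = -\pi/2$ the base point is again fixed, but I will switch to the second chart of $\H\P^1$, where, by the companion computation carried out at the end of Subsection \ref{Subsection Spin(3) action Sp(1)xId_1}, the lift becomes $(\Id,p)$ and therefore $\mu_{-}(\Id,p)a = a\bar{p}$ recovers the right quaternionic action of $\Sp(1)$ on $\H$. Its orbits are the origin when $a=0$ and the concentric $3$-spheres $\{|a| = \text{const} > 0\}$ otherwise.

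The main obstacle is really only the chart-change at the south pole, namely verifying that the lift of the $\SO(5)$-action jumps from $(p,\Id)$ to $(\Id,p)$ between the two affine charts of $\H\P^1$; this is precisely the transition-function computation already performed in Subsection \ref{Subsection Spin(3) action Sp(1)xId_1}, which reduces to the observation that under inversion on $\H\P^1$ left multiplication by $p$ on homogeneous coordinates is conjugate to right multiplication by $\bar p$. Assembling the three cases yields the orbit structure recorded in Table~\ref{Table Orbits Sp(1)xId_1}.
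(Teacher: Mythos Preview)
Your proposal is correct and follows essentially the same approach as the paper: the lemma in the paper is stated as a summary of the preceding discussion in Subsection~\ref{Subsection Spin(3) action Sp(1)xId_1}, which performs exactly your case analysis (generic $\alpha$ via the first $\H\P^1$ chart with lift $(p,\Id)$ and hence trivial $\mu_-$-action on the fibre; the south pole via the second chart with lift $(\Id,p)$ and hence $a\mapsto a\bar p$). Your treatment of the north pole as a fixed point with trivial fibre action is the only point made slightly more explicit than in the paper, and it is correct.
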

\begin{table}[h]
\[
\begin{array}{|c|c|c|c|}
\hline
\alpha & a & \textup{Orbit}\\ \hline
 \neq\pm\frac{\pi}{2} &  & S^3 \\ \hline
 =-\frac{\pi}{2} & \neq0 & S^3 \\ \hline
 =-\frac{\pi}{2} & =0 & \textup{Point} \\ \hline
 =\frac{\pi}{2} & & \textup{Point} \\ \hline
 
\end{array}
\] 
\caption{$\Spin(3)$ Orbits}\label{Table Orbits Sp(1)xId_1}
\end{table}

When $\alpha\neq\pm\pi/2$ we can use the orthonormal frame of Subsection \ref{Subsection Frame S^4 Sp(1)xId_1}. Obviously, it is invariant under the action. Hence, in the induced trivialization of $\dS_{\!-}(S^4)$, $\Sp(1)$ acts only on the component of the basis. In particular, it follows that $\{\sigma_1,\sigma_2,\sigma_3\}$ is a coframe on the orbits of the $\SU(2)$ action, and, $\{\partial_1,\partial_2,\partial_3\}$ is the relative frame. Observe that we are working on the coframe $\{d\alpha, \sigma_1,\sigma_2,\sigma_3, da_0, da_1, da_2, da_3\}$.

\subsection{The choice of frame and the \texorpdfstring{$\Spin(7)$}{Lg} geometry in the adapted coordinates}\label{Subsection final coframe Sp(1)xId_1} Since the considered $\SU(2)$ action only moves the base of the vector bundle $\dS_{\!-}(S^4)$ in the trivialization of Subsection \ref{Subsection Frame S^4 Sp(1)xId_1}, it is natural to use: $\{d\alpha,\sigma_1,\sigma_2,\sigma_3,\xi_0,\xi_1,\xi_2,\xi_3\}$. The metrics $g_c$ and the Cayley forms $\Phi_c$ admit a nice formula with respect to this coframe. Recall that we are working on the chart $\mathcal{U}:=\dS_{\!-}(S^4)\setminus \{\alpha=\pm\pi/2\}$.

\begin{proposition}
Given $c\geq0$, the Riemannian metric $g_c$, in the coframe considered in this subsection, satisfies: 
\begin{align}\label{g_c final coordinates Sp(1)xId_1}
g_c=5(c+r^2)^{3/5}\left( d\alpha^2+\cos^2\alpha \left(\sigma_1^2+\sigma_2^2+\sigma_3^2\right)\right)+4(c+r^2)^{-2/5} \left(\xi_0^2+\xi_1^2+\xi_2^2+\xi_3^2\right),
\end{align}
where $r^2=a_0^2+a_1^2+a_2^2+a_3^2$.

Given $c\geq0$, the Cayley form $\Phi_c$, in the coframe considered in this subsection, satisfies: 
\begin{equation}\label{Phi_c final coordinates Sp(1)xId_1}
\begin{aligned}
\Phi_c=&16(c+r^2)^{-4/5}\xi_0\wedge\xi_1 \wedge\xi_2\wedge\xi_3+25 (c+r^2)^{6/5}\cos^3\alpha d\alpha\wedge\sigma_1\wedge\sigma_2\wedge\sigma_3\\
&+20(c+ r^2)^{1/5} \cos\alpha\left(\sum_{i=1}^3 (\xi_0\wedge\xi_i-\xi_j\wedge\xi_k)\wedge (d\alpha\wedge\sigma_i-\cos\alpha\sigma_j\wedge\sigma_k)\right),
\end{aligned}
\end{equation}
where $r^2=a_0^2+a_1^2+a_2^2+a_3^2$.
\end{proposition}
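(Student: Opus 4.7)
The plan is straightforward: both identities follow from direct substitution of the coframe relations \eqref{frame S^4 Sp(1)xId_1} into the intrinsic formulas \eqref{g_c} and \eqref{Phi_c} for $g_c$ and $\Phi_c$. No new geometric input beyond Section \ref{Section BS} and Subsection \ref{Subsection Frame S^4 Sp(1)xId_1} is needed, and the vertical coframe $\{\xi_0,\xi_1,\xi_2,\xi_3\}$ already appears in both expressions.

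For the metric, I would substitute $b_0=d\alpha$ and $b_i=\cos\alpha\,\sigma_i$ for $i=1,2,3$ into
\[
g_c = 4(c+r^2)^{-2/5}(\xi_0^2+\xi_1^2+\xi_2^2+\xi_3^2)+5(c+r^2)^{3/5}(b_0^2+b_1^2+b_2^2+b_3^2),
\]
and observe that $b_0^2+b_1^2+b_2^2+b_3^2 = d\alpha^2+\cos^2\alpha(\sigma_1^2+\sigma_2^2+\sigma_3^2)$. This gives \eqref{g_c final coordinates Sp(1)xId_1} immediately.

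For the Cayley form, I would treat the three terms of \eqref{Phi_c} separately. The first term $16(c+r^2)^{-4/5}\xi_0\wedge\xi_1\wedge\xi_2\wedge\xi_3$ is unchanged since it only involves the vertical coframe. For the second term, a direct wedge computation yields
\[
b_0\wedge b_1\wedge b_2\wedge b_3 = \cos^3\alpha\, d\alpha\wedge\sigma_1\wedge\sigma_2\wedge\sigma_3.
\]
For the third term, I would use the definition $\Omega_i = b_0\wedge b_i - b_j\wedge b_k$ and the coframe relations to rewrite
\[
\Omega_i = \cos\alpha\bigl(d\alpha\wedge\sigma_i - \cos\alpha\,\sigma_j\wedge\sigma_k\bigr),
\]
and then factor the common $\cos\alpha$ outside the sum $\sum_i A_i\wedge \Omega_i$; the $A_i$'s are left as $\xi_0\wedge\xi_i - \xi_j\wedge\xi_k$ since these are already written in the desired coframe. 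Assembling the three contributions reproduces \eqref{Phi_c final coordinates Sp(1)xId_1}.

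There is no real obstacle: the computation is a mechanical substitution, and the only thing to keep track of is the orientation convention on the cyclic triple $(i,j,k)$, which is consistent between the definitions of $A_i$ and $\Omega_i$ so that the factorization through $\cos\alpha$ goes through uniformly. As a sanity check, one could verify that the top-degree part $\Phi_c\wedge\Phi_c$ reproduces $14\vol_c$ using \eqref{Vol_c} and the expressions just obtained, but this is not needed for the statement.
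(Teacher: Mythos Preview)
Your proposal is correct and takes exactly the same approach as the paper, which simply states that the formulas follow immediately from \eqref{Phi_c}, \eqref{g_c}, and the coframe relations \eqref{frame S^4 Sp(1)xId_1}. You have spelled out the substitution in more detail than the paper does, but the argument is identical.
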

\begin{proof}
It follows immediately from (\ref{Phi_c}), (\ref{g_c}) and the choice of the coframe.
\end{proof}
If we denote by $\{e_\alpha,e_1,e_2,e_3,e_{\xi_0},e_{\xi_1},e_{\xi_2},e_{\xi_3}\}$ the frame dual to $\{d\alpha,\sigma_1,\sigma_2,\sigma_3,\xi_0,\xi_1,\xi_2,\xi_3\}$, it is straightforward to relate these vectors to $\partial_{\alpha}, \partial_1,\partial_2,\partial_3,\partial_{a_0},\partial_{a_1},\partial_{a_2},\partial_{a_3}$. 
\begin{lemma} \label{e_i w.r.t partial_i Sp(1)xId_1}
The dual frame $\{e_\alpha,e_1,e_2,e_3,e_{\xi_0},e_{\xi_1},e_{\xi_2},e_{\xi_3}\}$ satisfies:
\begin{equation*} 
\begin{split}
e_\alpha&=\partial_\alpha;\\
e_2&=\partial_2 +l\left( -a_2\partial_{a_0} -a_3\partial_{a_1}+a_0\partial_{a_2}+a_1\partial_{a_3}\right); \\
e_{\xi_i}&=\partial_{a_i} \hspace{15 pt} \forall i=0,1,2,3, \\
\end{split}
\quad\quad
\begin{split}
e_1&=\partial_1 +l\left( -a_1\partial_{a_0} +a_0\partial_{a_1}+a_3\partial_{a_2}-a_2\partial_{a_3}\right);\\
e_3&=\partial_3 +l\left( -a_3\partial_{a_0} +a_2 \partial_{a_1}-a_1\partial_{a_2}+a_0\partial_{a_3}\right);\\
\hspace{5pt}\\
\end{split}
\end{equation*}
where $l$ is as defined in Subsection \ref{The horizontal and the vertical space Sp(1)xId_1}.
\end{lemma}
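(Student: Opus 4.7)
The plan is to set up the explicit change-of-basis between the coordinate coframe $\{d\alpha,\sigma_1,\sigma_2,\sigma_3,da_0,da_1,da_2,da_3\}$ (whose dual frame is the coordinate frame $\{\partial_\alpha,\partial_1,\partial_2,\partial_3,\partial_{a_0},\partial_{a_1},\partial_{a_2},\partial_{a_3}\}$) and the adapted coframe $\{d\alpha,\sigma_1,\sigma_2,\sigma_3,\xi_0,\xi_1,\xi_2,\xi_3\}$, then invert it. By (\ref{Vertical1FormsinFrame Sp(1)xId_1}), the first four elements of the two coframes agree, while each $\xi_i$ differs from $da_i$ only by a linear combination of the $\sigma_j$'s with coefficients $l a_k$. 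Therefore, in the ordering above, the transition matrix $A$ satisfying $\tilde{\omega}^i = A^i_j \omega^j$ has the block form
\[
A = \begin{pmatrix} I_4 & 0 \\ B & I_4 \end{pmatrix},
\]
where the $4\times 4$ block $B$ has vanishing first column (there is no $d\alpha$ contribution in any $\xi_i$) and its remaining entries are read off directly from (\ref{Vertical1FormsinFrame Sp(1)xId_1}).

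Since $A$ is block lower-triangular with identity diagonal blocks, one has trivially
\[
A^{-1} = \begin{pmatrix} I_4 & 0 \\ -B & I_4 \end{pmatrix}.
\]
The next step is to use the standard fact that if coframes transform as $\tilde{\omega}^i = A^i_j\,\omega^j$, then the dual frames transform as $\tilde{e}_k = (A^{-1})^j_k\,e_j$. Reading off the columns of $A^{-1}$ immediately yields: $e_\alpha = \partial_\alpha$ (first column is $(1,0,\dots,0)$), $e_{\xi_i} = \partial_{a_i}$ for $i=0,\dots,3$ (the last four columns are standard basis vectors in the lower block), and for each $e_k$ ($k=1,2,3$) one obtains $\partial_k$ plus a correction in the vertical directions given by $-1$ times the corresponding column of $B$.

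The only point requiring attention is the sign-bookkeeping in $B$: one must read off the coefficients of $\sigma_1,\sigma_2,\sigma_3$ in each $\xi_i$ from (\ref{Vertical1FormsinFrame Sp(1)xId_1}), arrange them into columns (indexed by the $\sigma$'s) of the $4\times 3$ nontrivial part of $B$, and negate. Doing so for the $\sigma_1$ column gives $(-la_1, la_0, la_3, -la_2)$, which produces the stated expression for $e_1$; the $\sigma_2$ and $\sigma_3$ columns similarly yield the formulas for $e_2$ and $e_3$. There is no real obstacle here beyond this careful sign-tracking, which is why the author dispatches the proof in one line; the value of writing it out is simply to record the block-triangular structure that makes the inversion trivial.
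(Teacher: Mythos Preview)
Your proof is correct and is exactly the computation the paper has in mind: the author's one-line ``straightforward from the definition of dual frame and (\ref{Vertical1FormsinFrame Sp(1)xId_1})'' amounts precisely to writing the change-of-coframe matrix in block-triangular form and reading off the columns of its inverse, as you do. There is no difference in approach---you have simply made explicit the linear algebra that the paper leaves to the reader.
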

\begin{proof}
It is straightforward from the definition of dual frame and (\ref{Vertical1FormsinFrame Sp(1)xId_1}).
\end{proof}

\subsection{The Cayley condition}\label{subsection Cayley condition Sp(1)xId_1} Analogously to the case carried out in Section \ref{Section SO(3)xId_2}, the generic orbits of the considered $\SU(2)$ action are $3$-dimensional (see Lemma \ref{Geometry of Spin(3) orbits Sp(1)xId_1}). Hence, it is sensible to look for invariant Cayley submanifolds. To this purpose, we assume that the submanifold $N$ consists of a 1-parameter family of $3$-dimensional $\SU(2)$-orbits in $M$. In particular, the coordinates that do not describe the orbits, i.e. $a_0,a_1,a_2, a_3$ and $\alpha$, need to be functions of a parameter $\tau$. This means that we can write:
\begin{equation}
\begin{aligned}
N=&\left\{\left( (\cos\alpha(\tau)\textbf{u},\sin\alpha(\tau)),( a_0(\tau),a_1(\tau),a_2(\tau),a_3(\tau)) \right) : \av{u}=1,\tau\in(-\epsilon,\epsilon) \right\}.
\end{aligned}
\end{equation}
The tangent space is spanned by $\{\partial_1,\partial_2,\partial_3,\dot{\alpha}\partial_\alpha+\sum_{i=0}^3 \dot{a}_i \partial_{a_i}\}$, where the dots denote the derivatives with respect to $\tau$. The condition under which $N$ is Cayley becomes a system of ODEs.

\begin{theorem}\label{Complicated ODEs Sp(1)xId_1}
Let $N$ be an $\SU(2)$-invariant submanifold as described at the beginning of this subsection. Then, $N$ is Cayley in the chart $\mathcal{U}$ if and only if the following system of ODEs is satisfied: 
\begin{equation*}
\left\{
\begin{aligned}
&\dot{a}_0 a_1-\dot{a}_1 a_0-\dot{a}_2 a_3+\dot{a}_3 a_2=0\\
&\dot{a}_0 a_2+\dot{a}_1 a_3-\dot{a}_2 a_0-\dot{a}_3 a_1=0\\
&\dot{a}_0 a_3-\dot{a}_1 a_2+\dot{a}_2 a_1-\dot{a}_3 a_0=0\\
&\cos\alpha (-f\cos^2\alpha+3l^2 g r^2)\dot{a}_0-l(l^2gr^2-3f\cos^2 \alpha)a_0\dot{\alpha}=0\\
&\cos\alpha (-f\cos^2\alpha+3l^2 g r^2)\dot{a}_1-l(l^2gr^2-3f\cos^2 \alpha)a_1\dot{\alpha}=0\\
&\cos\alpha (-f\cos^2\alpha+3l^2 g r^2)\dot{a}_2-l(l^2gr^2-3f\cos^2 \alpha)a_2\dot{\alpha}=0\\
&\cos\alpha (-f\cos^2\alpha+3l^2 g r^2)\dot{a}_3-l(l^2gr^2-3f\cos^2 \alpha)a_3\dot{\alpha}=0
\end{aligned}
\right.
\end{equation*}
where $r^2=a_0^2+a_1^2+a_2^2+a_3^2$, $l=(\sin\alpha-1)/2$, $f=5(c+r^2)^{3/5}$ and $g=4(c+r^2)^{-2/5}$.
\end{theorem}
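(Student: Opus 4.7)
The plan is to apply the Karigiannis--Min-Oo characterization of Cayley 4-planes (Proposition~\ref{Cayley condition KM}) to the tangent space of $N$, which at a generic point is spanned by $u := \partial_1$, $v := \partial_2$, $w := \partial_3$, and $y := T = \dot\alpha\,\partial_\alpha + \sum_{i=0}^3 \dot a_i\,\partial_{a_i}$. The submanifold $N$ is Cayley precisely when the 2-form
\[
\eta = \pi_7\!\left(u^{\musFlat}\wedge B(v,w,y)+v^{\musFlat}\wedge B(w,u,y)+w^{\musFlat}\wedge B(u,v,y)+y^{\musFlat}\wedge B(v,u,w)\right)
\]
vanishes. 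Since $\Lambda^2_7$ is 7-dimensional, the vanishing of $\eta$ is equivalent to exactly seven scalar equations, which we expect to be the seven ODEs in the statement.

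To carry out the calculation efficiently, I would first transfer everything to the diagonalizing frame of Subsection~\ref{Subsection final coframe Sp(1)xId_1}, in which $\Phi_c$ has the compact form (\ref{Phi_c final coordinates Sp(1)xId_1}) and $g_c$ has the diagonal form (\ref{g_c final coordinates Sp(1)xId_1}). Using Lemma~\ref{e_i w.r.t partial_i Sp(1)xId_1}, each $\partial_i$ decomposes as $\partial_i = e_i + l\,(\text{vertical})$ and $T = \dot\alpha\,e_\alpha + \sum \dot a_i\, e_{\xi_i}$. With these expressions in hand, the three interior products $B(v,w,y), B(w,u,y), B(u,v,y)$ and the purely horizontal $B(v,u,w)$ can be read off from (\ref{Phi_c final coordinates Sp(1)xId_1}) by repeatedly contracting the three summands. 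The metric duals $u^{\musFlat}, v^{\musFlat}, w^{\musFlat}, y^{\musFlat}$ are obtained from (\ref{g_c final coordinates Sp(1)xId_1}) and are combinations of $\sigma_i, d\alpha$ and the $\xi_i$.

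I expect the term $y^{\musFlat}\wedge B(v,u,w)$, whose $B$-factor involves only base-direction vectors, to contribute precisely the three algebraic relations among $\dot a_i$ and $a_j$ (equations~1--3); these are skew and $\SU(2)$-equivariant, as predicted by the structure of the problem. The remaining three terms, each containing exactly one occurrence of $T$, will contribute the four equations (4--7) coupling $\dot\alpha$ to $\dot a_i$; the fact that the coefficients organize themselves into the combinations $-f\cos^2\alpha + 3l^2 g r^2$ and $l(l^2 g r^2 - 3f\cos^2\alpha)$ is a direct consequence of how the prefactors $(c+r^2)^{-4/5}, (c+r^2)^{6/5}, (c+r^2)^{1/5}$ in (\ref{Phi_c final coordinates Sp(1)xId_1}) combine with the metric coefficients $f,g$ and the connection factor $l$ appearing in the horizontal parts of $\partial_1,\partial_2,\partial_3$.

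The main obstacle is purely the bookkeeping of the projection $\pi_7$, since $\pi_7(\beta)=\tfrac14(\beta+\beta\iprod\Phi_c)$ requires one further interior contraction of every 2-form with $\Phi_c$. To keep this under control I would fix an explicit basis of $\Lambda^2_7$ adapted to the frame $\{d\alpha,\sigma_i,\xi_j\}$, expand $\eta$ directly in that basis, and read off the seven coefficients. The internal consistency check is that exactly seven independent equations are produced, and that they split cleanly into a ``fibre block'' (the first three) and a ``normal block'' (the last four), matching the $\SU(2)$-equivariance of $N$. Because the whole computation is long but entirely mechanical once the frame is fixed, it is appropriately relegated to Appendix~\ref{appendix proof of Theorem ODES SO(3)xId_2}-style treatment, as was done for Theorem~\ref{Complicated ODEs SO(3)xId_2}.
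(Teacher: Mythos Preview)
Your proposal is correct and follows essentially the same route as the paper: rewrite the tangent vectors $\partial_1,\partial_2,\partial_3,T$ in the frame $\{e_\alpha,e_i,e_{\xi_j}\}$ via Lemma~\ref{e_i w.r.t partial_i Sp(1)xId_1}, apply Proposition~\ref{Cayley condition KM}, and organize the resulting 2-form against an explicit basis of $\Lambda^2_7$, exactly as in Appendix~\ref{appendix proof of Theorem ODES SO(3)xId_2}. One simplification you will discover along the way (and which the paper records in the remark immediately following the theorem) is that, just as in the $\SO(3)\times\Id_2$ case, the sum $\Psi_1+\Psi_2+\Psi_3+\Psi_4$ already lies in $\Lambda^2_7$, so $\pi_7$ acts as the identity and the ``main obstacle'' you anticipate dissolves. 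Your heuristic attribution of equations (1)--(3) solely to the term $y^{\musFlat}\wedge B(v,u,w)$ is not quite how the bookkeeping falls out---all four $\Psi_i$ contribute to each $\lambda_k$-coefficient---but this does not affect the validity of the method.
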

\begin{proof}
We first write the tangent space of $N$, which is spanned by $\{\partial_1,\partial_2,\partial_3,\dot{\alpha}\partial_\alpha+\sum_{i=0}^3 \dot{a}_i \partial_{a_i}\}$, in terms of the frame $\{e_\alpha,e_1,e_2,e_3,e_{\xi_0},e_{\xi_1},e_{\xi_2},e_{\xi_3}\}$. This can be easily done using Lemma \ref{e_i w.r.t partial_i Sp(1)xId_1}. Through a long computation analogous to the one carried out in Appendix \ref{appendix proof of Theorem ODES SO(3)xId_2}, we can apply Proposition \ref{Cayley condition KM} to this case, and we obtain the system of ODEs.
\end{proof}

\begin{remark}
It is interesting to point out that, exactly as in the $\SO(3)\times\Id_2$ case (see Lemma \ref{Basis Lambda^2_7 SO(3)xId_2}), the projection $\pi_7$ of Proposition \ref{Cayley condition KM} will just be the identity in the proof of Theorem \ref{Complicated ODEs Sp(1)xId_1}.
\end{remark}

\subsection{The Cayley fibration}
 In the previous section we found the condition that makes $N$, $\SU(2)$-invariant submanifold, Cayley. This consists of a system of ODEs, which will characterize the desired Cayley fibration. 
 
Harvey and Lawson local existence and uniqueness theorem implies that any $\SU(2)$-invariant Cayley can meet the zero section only when $\alpha=\pm\pi/2$, i.e. outside of $\mathcal{U}$. Otherwise, the zero section of $\dS_{\!-}(S^4)$, which is Cayley, would intersect such an $N$ in a $3$-dimensional submanifold, contradicting Harvey and Lawson theorem. It follows that the initial value of one of the $a_i$s is different from zero. We take $a_0(0)\neq0$, as the other cases will follow similarly. %, without loss of generality, that $a_0(0)\neq0$, where $a_0(0)$ is the initial value of $a_0$.
Now, it is straightforward to notice that:
\begin{align}\label{solution first ODEs Sp(1)xId_1}
a_1=\frac{a_1(0)}{a_0(0)} a_0; \hspace{15pt} a_2=\frac{a_2(0)}{a_0(0)} a_0; \hspace{15pt} a_3=\frac{a_3(0)}{a_0(0)} a_0;
\end{align}
solves the first $3$ equations of the system given in Theorem \ref{Complicated ODEs Sp(1)xId_1}. Moreover, it also reduces the remaining equations to the ODE: 
\begin{align*}
\cos\alpha (-f\cos^2\alpha+3l^2 g r^2)\dot{a}_0-l(l^2gr^2-3f\cos^2 \alpha)a_0\dot{\alpha}=0,
\end{align*}
where, as usual, $r^2=a_0^2+a_1^2+a_2^2+a_3^2$, $l=(\sin\alpha-1)/2$, $f=5(c+r^2)^{3/5}$ and $g=4(c+r^2)^{-2/5}$. As (\ref{solution first ODEs Sp(1)xId_1}) implies that $a_0=p^{-1} r$, where $p$ is the positive real number satisfying $p^2=1+\sum_{i=1}^3(a_i(0)/a_0(0))^2$, we can rewrite the previous ODE as:
\begin{align}\label{final ODE Sp(1)xId_1}
\cos\alpha (-f\cos^2\alpha+3l^2 g r^2)\dot{r}-l(l^2gr^2-3f\cos^2 \alpha)r\dot{\alpha}=0.
\end{align}

\begin{remark}
It is easy to verify that (\ref{final ODE Sp(1)xId_1}) is not in exact form. Hence, it cannot be easily integrated. It is a non-trivial open task to verify whether, possibly up to change of coordinates, (\ref{final ODE Sp(1)xId_1}) can be integrated in closed form. 
 \end{remark}

In order to understand the $\SU(2)$-invariant Cayley fibrations, we analyse the ODE (\ref{final ODE Sp(1)xId_1}). First, we deduce the sign of $f_1:=\cos\alpha (-f\cos^2\alpha+3l^2 g r^2)$. If we let 
\begin{align*}
\alpha_c(r):=\arcsin\left(-\frac{2r^2+5c}{8r^2+5c}\right),
\end{align*}
it easy to verify that $f_1$ is positive on the left of $\alpha_c$ for $(\alpha,r)\in(-\pi/2,\pi/2)\times \R^{+}$, and negative otherwise. Moreover, $f_1$ vanishes along the $3$ curves $\alpha_c,\alpha=\pm\pi/2$; there, $f_1$ changes sign. Note that $\alpha_c\to \arcsin(-1/4)$ as $r\to\infty$. 

Now, we consider $f_2:=l(l^2gr^2-3f\cos^2 \alpha)r$. Letting
$$
\beta_c(r):= \arcsin\left(-\frac{14r^2+15c}{16r^2+15c}\right),
$$
then, $f_2$ is positive on the right of $\beta_c$ for $(\alpha,r)\in(-\pi/2,\pi/2)\times \R^{+}$, and it is negative otherwise. Obviously, $f_2$ vanishes along the curve $\beta_c$ and the vertical line $\alpha=\pi/2$. Note that $\beta_c\to \arcsin(7/8)$ as $r\to\infty$. The last key observation is that $f_2/f_1$ tends to zero as $\alpha$ tends to $\pi/2$.

Putting what said so far together, and observing that $\beta_c(r)<\alpha_c(r)$ for all $r>0$, we can draw the flow lines for $(\ref{final ODE Sp(1)xId_1})$ (see Figure \ref{Flow lines vector fields Sp1xId1}). 
\begin{figure}
    \centering
      \includegraphics[scale=0.4]{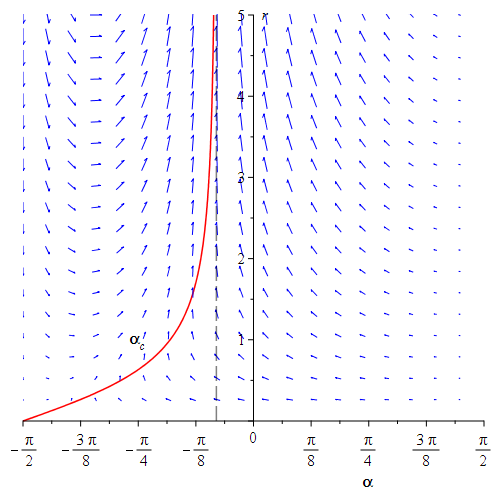}
    \caption{Flow lines for $(\ref{final ODE Sp(1)xId_1})$.}%
    \label{Flow lines vector fields Sp1xId1}%
\end{figure}
Finally, we can use these to deduce the form of the solutions from standard arguments (see Figure \ref{Figure solution ODEs Sp1xId1}). We give further details in Appendix \ref{Appendix analysis ODE Sp(1)xId_1}.

\begin{figure}
    \centering
    \begin{subfigure}{0.48\textwidth}\centering\includegraphics[scale=0.42]{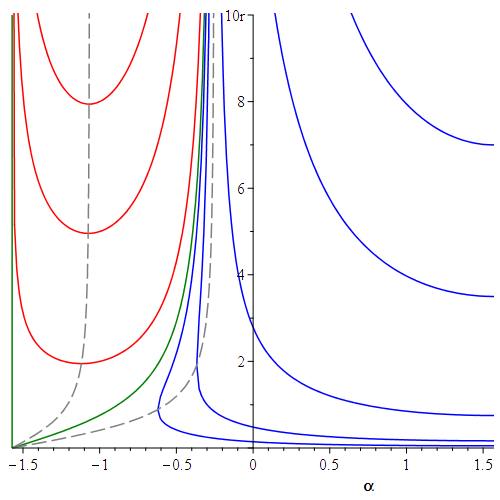} \caption{generic case}
    \end{subfigure}
	\begin{subfigure}{0.48\textwidth}\centering\includegraphics[scale=0.42]{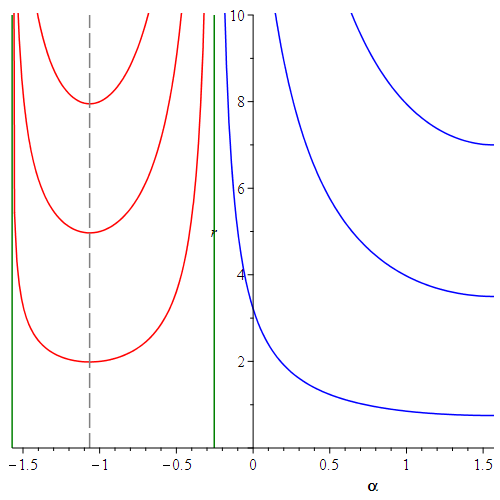} \caption{conical case}
    \end{subfigure}
    \caption{Solutions of $(\ref{final ODE Sp(1)xId_1})$.}%
    \label{Figure solution ODEs Sp1xId1}%
\end{figure}

\subsection*{The conical version.} We consider the easier conical case first. From a topological point of view, it is obvious that the red and green Cayleys of Figure \ref{Figure solution ODEs Sp1xId1} (B) are homeomorphic to $S^3\times\R$. As the the group action becomes trivial on $\alpha=\pi/2$, the topology of the fibres in blue cannot be recovered from the picture. However, it will be clear from the asymptotic analysis that these are smooth topological $\R^4$s. As a consequence, we have constructed a Cayley fibration on the chart $\mathcal{U}\cap M_0$, which extends to the whole $M_0$ by continuity (i.e. we complete the Cayleys in blue and we add the whole $\pi_0$-fibre at $\alpha=-\pi/2$). On $M_0$ the Cayley fibration remains a fibration in the classical sense. A reasoning similar to the one of Section \ref{Section SO(3)xId_2} shows that the parametrizing space $\mathcal{B}$ of the Cayley fibration is $\R^4$. 

\subsection*{The smooth version.} Now, we deal with the generic case $c>0$. As above, the topology of the red Cayleys of Figure \ref{Figure solution ODEs Sp1xId1} (A) is $S^3\times \R$; the blue ones have topology $\R^4$. In the latter, we use the same asymptotic analysis argument of the conical case. Finally, the submanifolds in green are smooth topological $\R^4$s. As usual, we extend the Cayley fibration on $\mathcal{U}$ to the whole $M$ by continuity (i.e. we add the whole $\pi_c$-fibre over $\alpha=-\pi/2$, we complete the Cayleys in blue and green, and we add the zero section $S^4$). Observe that the zero section, the $\pi_c$-fibre over $\alpha=-\pi/2$ and the green Cayleys all intersect in a point $p$. It follows that the $M'$ given in Definition \ref{Cayley fibration definition} is equal to $M\setminus\{p\}$.  Once again, a reasoning similar to the one of Section \ref{Section SO(3)xId_2} shows that the parametrizing space $\mathcal{B}$ of the Cayley fibration is $S^4$. 

\subsection*{The smoothness of the fibres (the asymptotic analysis)} In this subsection, we study the smoothness of the fibres. This is trivial as long as the submanifolds are contained in $\mathcal{U}$; hence, the Cayleys of topology $S^3\times \R$ are smooth, and we only need to check the others at the points where they meet $\partial\mathcal{U}$. To this purpose, we carry out a asymptotic analysis similar to the one of Section \ref{Section SO(3)xId_2}. 

As a first step, we restrict the metric $g_c$ to $N$. Combining $(\ref{g_c final coordinates Sp(1)xId_1})$ together with $(\ref{solution first ODEs Sp(1)xId_1})$ and its consequence $a_0=p^{-1} r$ for $p$ positive real number satisfying $p^2=1+\sum_{i=1}^3(a_i(0)/a_0(0))^2$, we can write the restriction as follows: 
\begin{equation} \label{metric for asymptotic/singular geometry Sp(1)xId_1}
\begin{aligned}
\restr{g_c}{N}\!=\!\left( 5(c+r^2)^{3/5} \cos^2\alpha +4(c+r^2)^{-2/5} l^2 r^2 \right)\!\left(\sigma_1^2+\sigma_2^2+\sigma_3^2\right)\!+\!4(c+r^2)^{-2/5}dr^2\!+\!5(c+r^2)^{3/5} d\alpha^2,
\end{aligned}
\end{equation}
where $\alpha$ and $r$ are related by the differential equation (\ref{final ODE Sp(1)xId_1}) and, as usual, $l=(\sin\alpha-1)/2$.

Recall that $f_2/f_1\to 0$ as $\alpha\to\pi/2$. Therefore, the Cayleys around $\alpha=\pi/2$ are asymptotic to the horizontal line $\alpha=r_0$ for some constant $r_0\geq0$.  By  (\ref{metric for asymptotic/singular geometry Sp(1)xId_1}), the metric in this first order linear approximation becomes: 
\begin{equation*}
\begin{aligned}
\restr{g_c}{N}\sim 5(c+r_0^2)^{3/5} \left( d(\alpha-\pi/2)^2+(\alpha-\pi/2)^2 \left(\sigma_1^2+\sigma_2^2+\sigma_3^2\right)\right).
\end{aligned}
\end{equation*}
In this way, we have proved that near $\alpha=\pi/2$ every Cayley we have constructed is smooth. Moreover, we can also deduce that the blue Cayleys of Figure \ref{Figure solution ODEs Sp1xId1} are topologically $\R^4$s.

Finally, we need to check whether the remaining Cayleys of topology $\R^4$ are smooth or not. In this situation we can approximate them near $\alpha=-\pi/2$ with the submanifold associated to the line: 
$$
\alpha=Ar-\frac{\pi}{2},
$$
where $A$ is some positive constant (as the lines corresponding to the Cayleys live between $\alpha_c$ and $\beta_c$). The metric in the linear approximation is asymptotic to: 
\begin{equation*}
\begin{aligned}
\restr{g_c}{N} \sim c^{-2/5}(5cA^2+4) \left( dr^2+r^2 \left(\sigma_1^2+\sigma_2^2+\sigma_3^2\right)\right),
\end{aligned}
\end{equation*}
hence, we conclude that these submanifolds are smooth as well.

\subsection*{The main theorems} Putting all these results together we obtain the following theorems. 
\begin{theorem}[Generic case]\label{Main theorem Cayley fibration generic Sp(1)xId_1}
Let $(M,\Phi_c)$ be the Bryant--Salamon manifold constructed over the round sphere $S^4$ for some $c>0$, and let $\SU(2)$ act on $M$ as in Subsection \ref{Subsection Spin(3) action Sp(1)xId_1}. Then, $M$ admits an $\SU(2)$-invariant Cayley fibration parametrized by $\mathcal{B}\cong S^4$. The fibres are topologically $S^3\times \R$, $S^4$ and $\R^4$. All the Cayleys are smooth. There is only one point where multiple fibres intersect. This point lies in the zero section of $\dS_{\!-}(S^4)$, and there are $S^3\sqcup\{\textup{two points}\}$ Cayleys passing through it.
\end{theorem}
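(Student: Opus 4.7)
The plan is to consolidate the analysis already carried out throughout Section \ref{Section Sp(1)xId_1}, in four steps. First, one reduces the Cayley condition to the single scalar ODE \eqref{final ODE Sp(1)xId_1}: Theorem \ref{Complicated ODEs Sp(1)xId_1} produces a system of seven equations, but Harvey--Lawson's local uniqueness theorem, applied against the zero section of $\dS_{-}(S^4)$ (which is itself an $\SU(2)$-invariant Cayley), forces every other invariant Cayley fibre to avoid the zero section inside the chart $\mathcal{U}$. This legitimizes the hypothesis $a_0(0)\neq 0$ and hence the ansatz \eqref{solution first ODEs Sp(1)xId_1}, which solves the first three equations automatically while collapsing the remaining four to \eqref{final ODE Sp(1)xId_1} in the $(\alpha,r)$-plane.

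Second, one classifies the invariant Cayley fibres via the phase portrait of \eqref{final ODE Sp(1)xId_1}. The sign analysis of the coefficients $f_1,f_2$ via the curves $\alpha_c,\beta_c$ (satisfying $\beta_c<\alpha_c$), together with the decay $f_2/f_1\to 0$ as $\alpha\to\pi/2$, yields the flow lines of Figure \ref{Flow lines vector fields Sp1xId1} and the solutions of Figure \ref{Figure solution ODEs Sp1xId1}(A). Three orbit families emerge: red orbits interpolating between the two boundary values $\alpha=\pm\pi/2$ at $r>0$, giving smooth $S^3\times\R$ fibres inside $\mathcal{U}$; blue orbits limiting horizontally to $(\pi/2,r_0)$ for some $r_0>0$; and green orbits limiting to the origin $(-\pi/2,0)$.

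Third, smoothness of the extensions follows from the asymptotic analysis. Substituting the linear models $\alpha\equiv r_0$ (blue) and $\alpha=Ar-\pi/2$ (green) into the restricted metric \eqref{metric for asymptotic/singular geometry Sp(1)xId_1} produces in each case a constant multiple of the flat Euclidean metric on $\R^4$, so the blue and green fibres extend smoothly through the corresponding points over the north and south poles respectively, as topological $\R^4$s. Two further invariant Cayleys are added by continuity: the zero section $S^4$ and the vertical fibre $\pi_c^{-1}(\text{south pole})\cong\R^4$, which is Cayley because its tangent space is a $\Spin(7)$ Cayley $4$-plane at each point.

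Finally, one identifies $\mathcal{B}\cong S^4$ and reads off the intersection data. In the trivialization of Subsection \ref{Subsection Spin(3) action Sp(1)xId_1} the $\SU(2)$ action moves only the base for $\alpha\neq\pm\pi/2$, so distinct initial directions $[a_0(0):a_1(0):a_2(0):a_3(0)]\in S^3$ yield distinct Cayleys along each flow line; an argument parallel to the parametrization-space discussion for the $\SO(3)\times\Id_2$ action then assembles the $S^3$-bundle of the three families together with the two exceptional Cayleys into $S^4$. Harvey--Lawson uniqueness keeps distinct fibres disjoint inside $\mathcal{U}$, and distinct blue fibres separate at the north pole because each has its own apex in the fibre $\H$ above the pole. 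The only possible intersection is at the single point $p=(\text{south pole},0)$, where the zero section, the vertical fibre, and the entire $S^3$ of green Cayleys (indexed by the entry direction) all meet, yielding the count $S^3\sqcup\{\text{two points}\}$. The main obstacle is the gluing argument producing $\mathcal{B}\cong S^4$, which demands careful topological bookkeeping of the three orbit families and the two extra Cayleys along their shared $S^3$ boundaries.
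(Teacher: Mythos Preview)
Your proposal is correct and follows essentially the same four-step approach as the paper: reduce to the scalar ODE via Theorem \ref{Complicated ODEs Sp(1)xId_1} and the Harvey--Lawson argument, analyse the phase portrait through the sign curves $\alpha_c,\beta_c$, verify smoothness of the blue and green fibres by asymptotic expansion of the restricted metric \eqref{metric for asymptotic/singular geometry Sp(1)xId_1}, and then assemble the parametrizing space and read off the intersection at the south pole. One small imprecision worth correcting: the red ($S^3\times\R$) orbits do not actually reach $\alpha=\pm\pi/2$ at finite $r>0$---the line $\alpha=-\pi/2$ is itself a solution and orbits reaching $\alpha=\pi/2$ are by definition blue---but rather escape to $r\to\infty$ (with $\alpha$ asymptoting to $-\pi/2$ on one end and to $\arcsin(-1/4)$ on the other), which is exactly why they stay inside $\mathcal{U}$ and keep the $S^3\times\R$ topology.
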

\begin{theorem}[Conical case]\label{Main theorem Cayley fibration conical Sp(1)xId_1}
Let $(M_0,\Phi_0)$ be the conical Bryant--Salamon manifold constructed over the round sphere $S^4$, and let $\SU(2)$ act on $M_0$ as in Subsection \ref{Subsection Spin(3) action Sp(1)xId_1}. Then, $M_0$ admits an $\SU(2)$-invariant Cayley fibration parametrized by $\mathcal{B}\cong \R^4$. The fibres are topologically $S^3\times\R$ or $\R^4$ and are all smooth. Moreover, as these do not intersect, the $\SU(2)$-invariant Cayley fibration is a fibration in the usual differential geometric sense with fibres Cayley submanifolds. 
\end{theorem}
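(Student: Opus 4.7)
The strategy is to specialise the earlier analysis of the ODE (\ref{final ODE Sp(1)xId_1}) to $c=0$, where a dilation symmetry in $r$ considerably simplifies the picture, and then to read off the fibration structure exactly as in the generic case. First I would observe that with $c=0$ one has $f=5r^{6/5}$ and $g=4r^{-4/5}$, so every term of (\ref{final ODE Sp(1)xId_1}) carries a common factor $r^{6/5}$; cancelling it reduces the equation to a separable form $\frac{dr}{r}=F(\alpha)\,d\alpha$ for an explicit rational function $F(\alpha)$, integrable in closed form by a single quadrature. In particular the critical loci $\alpha_c(r),\beta_c(r)$ degenerate to the horizontal lines $\alpha=\arcsin(-1/4)$ and $\alpha=\arcsin(-7/8)$, and any two integral curves in the same connected region of the $(\alpha,r)$-plane are dilations of one another in $r$. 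The qualitative flow is then the reduction of the generic case to Figure \ref{Figure solution ODEs Sp1xId1}(B), consisting of red curves running across the whole interval $\alpha\in(-\pi/2,\pi/2)$ and blue curves asymptoting to $\alpha=\pi/2$ with $r\to r_{0}>0$.

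Next I would deduce the topology and smoothness of each completed fibre. Along a red curve the $\SU(2)$-orbit has constant three-dimensional dimension, so its saturation is diffeomorphic to $S^{3}\times\R$ and is automatically smooth inside $\mathcal{U}\cap M_{0}$. Along a blue curve the orbit collapses at $\alpha=\pi/2$, but the asymptotic expansion of $\restr{g_{0}}{N}$ already carried out in the subsection, namely $5r_{0}^{6/5}\bigl(d(\alpha-\pi/2)^{2}+(\alpha-\pi/2)^{2}(\sigma_{1}^{2}+\sigma_{2}^{2}+\sigma_{3}^{2})\bigr)$, is the flat cone metric on $\R^{4}$; hence the completion is a smooth $\R^{4}$. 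Because the zero section of $\dS_{-}(S^{4})$ is excluded from $M_{0}$, the case $r_{0}=0$ never occurs and no singular completions are required.

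Finally I would identify the parameter space and establish disjointness. By (\ref{solution first ODEs Sp(1)xId_1}) the direction $[a_{0}(0):a_{1}(0):a_{2}(0):a_{3}(0)]\in S^{3}$ is constant along each Cayley, so Cayleys with different directions are disjoint; for fixed direction, distinct flow lines in $(\alpha,r)$ are disjoint by uniqueness of solutions to the reduced ODE, and their $\SU(2)$-saturations remain pairwise disjoint because the collapse point $r_{0}\cdot[a_{0}(0):\ldots:a_{3}(0)]$ at $\alpha=\pi/2$ uniquely labels each blue fibre. The Cayleys on $\mathcal{U}\cap M_{0}$ are thus parametrised by $S^{3}\times\R\cong\R^{4}\setminus\{0\}$; adding the single exceptional Cayley consisting of the whole $\pi_{0}$-fibre over the south pole (which is Cayley by direct inspection of (\ref{Phi_c final coordinates Sp(1)xId_1}) at $\alpha=-\pi/2$, where only the term $16(c+r^{2})^{-4/5}\xi_{0}\wedge\xi_{1}\wedge\xi_{2}\wedge\xi_{3}$ survives and equals the volume form of the fibre) adjoins the missing origin, yielding $\mathcal{B}\cong\R^{4}$. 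Non-intersection is then immediate, so the family of Cayleys is an honest fibration in the classical differential-geometric sense.

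The main obstacle is the clean identification of the parameter space as $\R^{4}$: this requires combining the $S^{3}$ of fibre-directions with the one-parameter scaling of flow lines and the exceptional south-pole fibre, and checking that the natural smooth structure agrees with the standard one. The conical scale invariance produced by setting $c=0$ is the crucial input here, as it collapses the per-direction moduli to a single copy of $\R$ and removes the need for the finer analysis employed in the generic case.
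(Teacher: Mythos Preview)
Your overall strategy matches the paper's, and your observation that the ODE \eqref{final ODE Sp(1)xId_1} becomes separable when $c=0$ is a genuine simplification that the paper does not make explicit: after dividing by $r^{6/5}$ one indeed obtains $\tfrac{dr}{r}=F(\alpha)\,d\alpha$ with
\[
F(\alpha)=\frac{-\sqrt{1-\sin\alpha}\,(7+8\sin\alpha)}{2\sqrt{1+\sin\alpha}\,(1+4\sin\alpha)},
\]
and the dilation symmetry you note follows immediately. The smoothness and disjointness arguments, and the identification of $\mathcal{B}$ by combining the $S^3$ of fibre directions with the one-parameter family of flow lines and the south-pole fibre, are essentially the paper's reasoning made more explicit.

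There is, however, a concrete error in your description of the flow. You claim the ``red curves run across the whole interval $\alpha\in(-\pi/2,\pi/2)$'', but the function $F$ above has a simple pole at $\alpha_0=\arcsin(-1/4)$ with residue $-5/6$, so every integral curve satisfies $r\sim A\,|\alpha-\alpha_0|^{-5/6}\to\infty$ as $\alpha\to\alpha_0$. No solution crosses $\alpha_0$; the strip is foliated by three qualitatively different families rather than two: U-shaped curves on $(-\pi/2,\alpha_0)$ with $r\to\infty$ at both ends and a minimum at $\alpha=\arcsin(-7/8)$; the vertical line $\alpha=\alpha_0$ itself (which solves the ODE since $f_1$ vanishes there); and the curves on $(\alpha_0,\pi/2]$ terminating horizontally at $\alpha=\pi/2$. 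This does not invalidate your conclusions---the first two families still give smooth $S^3\times\R$'s and the third smooth $\R^4$'s exactly as you argue---but the picture you describe is not the one your own separable ODE produces, and the leaf space is $(0,\infty)\cup\{\mathrm{pt}\}\cup(0,\infty)\cong\R$ obtained by letting the dilation parameter tend to infinity on each side of $\alpha_0$, not a single family spanning the whole interval.
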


\begin{remark}
	Blowing-up at the north pole, it is easy to see that the Cayley fibration becomes trivial in the limit. 
\end{remark}

\begin{remark}
	As in the previous section, we are able to compute the multi-moment maps relative to this action explicitly. Indeed, this is:
	\begin{align*}
\nu_c:=\frac{5}{6}(r^2-5c)(c+r^2)^{1/5} (\sin\alpha-1)^3-\frac{25}{2}(c+r^2)^{6/5}\cos^2\alpha (\sin\alpha-1).
\end{align*}
\end{remark}

In order to provide an idea on how the multi-moment maps behave, we draw the level sets of $\nu_1$ and $\nu_0$ (see Figure \ref{multi-moment map Sp(1)xId_1}).

\begin{figure}[H] 
    \centering
    \begin{subfigure}{0.48\textwidth}\centering\includegraphics[scale=0.43]{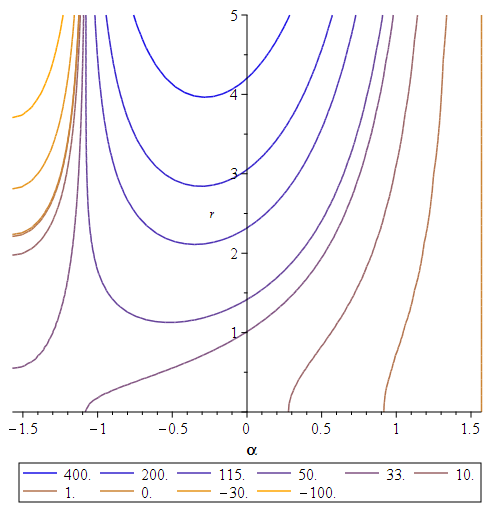} \caption{Level sets of $\nu_1$}
    \end{subfigure}
    \begin{subfigure}{0.48\textwidth}\centering\includegraphics[scale=0.43]{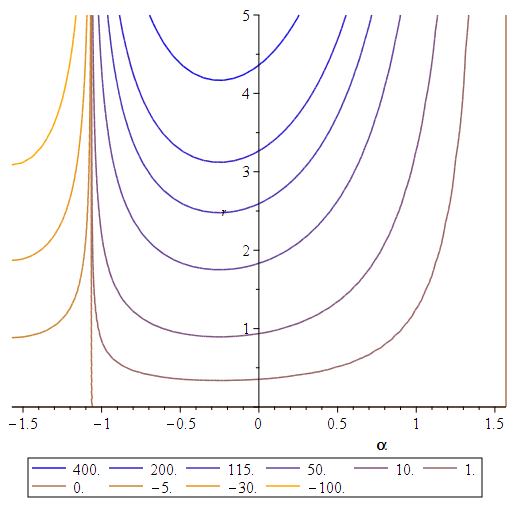} \caption{Level sets of $\nu_0$}
    \end{subfigure}
    \caption{Level sets of the multi-moment map in the generic and conical case}%
    \label{multi-moment map Sp(1)xId_1}%
\end{figure}

\subsection*{Asymptotic geometry.} The first observation we need to make is that there are only two asymptotic behaviours for the Cayleys constructed in Theorem \ref{Main theorem Cayley fibration generic Sp(1)xId_1} and in Theorem \ref{Main theorem Cayley fibration conical Sp(1)xId_1}: one corresponding to $\alpha\sim -\pi/2$ and the other to $\alpha\sim\arcsin(-1/4)$. In both cases, we can use (\ref{metric for asymptotic/singular geometry Sp(1)xId_1}) to obtain the asymptotic cone, which is: 
\begin{align*}
\restr{g_c}{N}\sim ds^2+\frac{9}{25}s^2 (\sigma_1^2+\sigma_2^2+\sigma_3^2),
\end{align*}
for $\alpha\sim\pi/2$, and it is 
\begin{align*}
\restr{g_c}{N}\sim ds^2+\frac{9}{16}s^2 (\sigma_1^2+\sigma_2^2+\sigma_3^2),
\end{align*}
for $\alpha\sim\arcsin(-1/4)$, where $s:=(10/3)r^{3/5}$.

\appendix

\section{}\label{appendix proof of Theorem ODES SO(3)xId_2}

In this appendix, we prove Theorem \ref{Complicated ODEs SO(3)xId_2}. First, we need to rewrite the tangent space of $N$ in the diagonalizing frame of Subsection \ref{Subsection final coframe SO(3)xId_2}.
\begin{lemma}\label{uvwy def SO(3)xId_2}
The tangent space of $N$ is spanned by:
$$
u:=te_{\omega_2}-se_{\omega_1}, \hspace{10pt} v:=e_2+\frac{\sin{\alpha}}{2}(te_s-se_t), \hspace{10pt} w:=e_3+\sin\alpha\left(te_{\omega_1}+se_{\omega_2}\right)
$$
and
$$y:=\dot{s}e_s+\dot{t}e_t+\dot{\alpha}e_\alpha+\dot{\beta}e_\beta+\dot{\delta}\left(se_{\omega_1}+te_{\omega_2}\right).$$
Moreover, through the musical isomorphism, we have:
\begin{align*}
u^{\musFlat}=(c+r^2)^{-2/5}(t\omega_2-s{\omega_1}), \hspace{10pt} v^{\musFlat}=5(c+r^2)^{3/5}\cos^2\alpha\sigma_2+2(c+r^2)^{-2/5}\sin{\alpha}(t\tilde{ds}-s\tilde{dt}),
\end{align*}
\begin{align*}
w^{\musFlat}=5(c+r^2)^{3/5} \cos^2\alpha \sigma_3+(c+r^2)^{-2/5}\sin\alpha\left(t{\omega_1}+s{\omega_2}\right)
\end{align*}
and 
\begin{align*}
y^{\musFlat}=5(c+r^2)^{3/5}(\dot{\alpha}d\alpha+\sin^2\alpha\dot{\beta}d\beta)+4(c+r^2)^{-2/5}(\dot{s}\tilde{ds}+\dot{t}\tilde{dt})+(c+r^2)^{-2/5}\dot{\delta}(s\omega_1+t\omega_2),
\end{align*}
where $r^2=s^2+t^2$.
\end{lemma}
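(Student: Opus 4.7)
The strategy is a direct computation in two stages: first rewrite the spanning set of $T_p N$ in the diagonalizing frame of Subsection \ref{Subsection final coframe SO(3)xId_2}, then apply the musical isomorphism using the diagonal form of $g_c$ given in Proposition \ref{Proposition Diagonal metric SO(3)xId_2}. No deep ideas are required; the only subtlety is that $y$ is not the tangent vector $\dot s\partial_s+\dot t\partial_t+\dot\alpha\partial_\alpha+\dot\beta\partial_\beta+\dot\delta\partial_\delta$ itself, but rather a simplification of it modulo the other three spanning vectors.

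First I would invert the relations in Lemma \ref{e_i w.r.t partial_i SO(3)xId_2} to express $\partial_1,\partial_2,\partial_3,\partial_\delta$ in terms of the $e$-frame. From the last two identities one immediately gets $\partial_\delta=se_{\omega_1}+te_{\omega_2}$ and $\partial_1=te_{\omega_2}-se_{\omega_1}$, which identifies $u=\partial_1$. Substituting the formula for $\partial_s$ and $\partial_t$ into the identity for $e_2$ gives $\partial_2=e_2+\tfrac{t\sin\alpha}{2}e_s-\tfrac{s\sin\alpha}{2}e_t=v$. For $\partial_3$, one substitutes the expressions for $\partial_\delta$ and $\partial_1$ just obtained; the coefficients of $e_{\omega_1}$ and $e_{\omega_2}$ simplify (after collecting $s^2+t^2$ and $t^2-s^2$) to $t\sin\alpha$ and $s\sin\alpha$ respectively, giving $\partial_3=w$.

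Next I would treat the fourth generator. Substituting $\partial_\beta=e_\beta-\cos\alpha\,\partial_1$ and using $\partial_1=te_{\omega_2}-se_{\omega_1}$ shows
\[
\dot s\partial_s+\dot t\partial_t+\dot\alpha\partial_\alpha+\dot\beta\partial_\beta+\dot\delta\partial_\delta
=\dot se_s+\dot te_t+\dot\alpha e_\alpha+\dot\beta e_\beta+(\dot\delta-\dot\beta\cos\alpha)te_{\omega_2}+(\dot\delta+\dot\beta\cos\alpha)se_{\omega_1}.
\]
Adding $\dot\beta\cos\alpha\cdot u=\dot\beta\cos\alpha(te_{\omega_2}-se_{\omega_1})$ cancels the $\pm\dot\beta\cos\alpha$ terms and produces exactly $y=\dot se_s+\dot te_t+\dot\alpha e_\alpha+\dot\beta e_\beta+\dot\delta(se_{\omega_1}+te_{\omega_2})$, which therefore lies in the span of the original four generators together with $u=\partial_1$; hence $\{u,v,w,y\}$ spans $T_pN$.

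Finally, the flat operator is immediate from Proposition \ref{Proposition Diagonal metric SO(3)xId_2}: since the coframe $\{d\alpha,d\beta,\sigma_2,\sigma_3,\tilde{ds},\tilde{dt},\omega_1,\omega_2\}$ diagonalizes $g_c$, the dual basis satisfies
\[
e_\alpha^{\musFlat}=5(c+r^2)^{3/5}d\alpha,\quad e_\beta^{\musFlat}=5(c+r^2)^{3/5}\sin^2\alpha\,d\beta,
\]
\[
e_2^{\musFlat}=5(c+r^2)^{3/5}\cos^2\alpha\,\sigma_2,\quad e_3^{\musFlat}=5(c+r^2)^{3/5}\cos^2\alpha\,\sigma_3,
\]
\[
e_s^{\musFlat}=4(c+r^2)^{-2/5}\tilde{ds},\quad e_t^{\musFlat}=4(c+r^2)^{-2/5}\tilde{dt},\quad e_{\omega_i}^{\musFlat}=(c+r^2)^{-2/5}\omega_i.
\]
Plugging these into the expressions for $u,v,w,y$ and collecting terms produces the four stated formulas for $u^{\musFlat},v^{\musFlat},w^{\musFlat},y^{\musFlat}$. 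The only mildly delicate step is confirming that the $\dot\beta$-contributions in $y$ can be absorbed into a multiple of $u$, which I expect to be the principal (and still entirely routine) obstacle.
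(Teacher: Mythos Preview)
Your proposal is correct and follows essentially the same route as the paper: invert Lemma \ref{e_i w.r.t partial_i SO(3)xId_2} to identify $\partial_1=u$, $\partial_2=v$, $\partial_3=w$, and $\partial_\delta=se_{\omega_1}+te_{\omega_2}$, then observe that the tangent vector differs from $y$ by $-\dot\beta\cos\alpha\,\partial_1$ (the paper states this directly; you equivalently add $\dot\beta\cos\alpha\,u$), and read off the flats from the diagonal metric of Proposition \ref{Proposition Diagonal metric SO(3)xId_2}. The only cosmetic difference is that the paper packages the $\partial_3$ computation as the identity $(s^2+t^2)\partial_\delta-(t^2-s^2)\partial_1=2st(te_{\omega_1}+se_{\omega_2})$, which is exactly the simplification you describe.
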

\begin{proof}
One can immediately see from Lemma \ref{e_i w.r.t partial_i SO(3)xId_2} that $\partial_1=u$, $\partial_2=v$ and $\partial_\delta=se_{\omega_1}+te_{\omega_2}$. We use these equality to obtain: 
\begin{align*}
(s^2+t^2)\partial_\delta-(t^2-s^2)\partial_1&=(s^2+t^2)(se_{\omega_1}+te_{\omega_2})-(t^2-s^2)(te_{\omega_2}-se_{\omega_1})\\
&=2st(te_{\omega_1}+se_{\omega_2}),
\end{align*}
which implies that $\partial_3=w$. We conclude noticing that $\dot{s}\partial_s+\dot{t}\partial_t+\dot{\alpha}\partial_\alpha+\dot{\beta}\partial_\beta+\dot{\delta}\partial_{\delta}=y-\dot{\beta}\cos\alpha\partial_1$, where we used once again Lemma \ref{e_i w.r.t partial_i SO(3)xId_2}. Obviously, the space spanned by $\{u,v,w,y\}$ coincides with the one spanned by $\{u,v,w,y-\dot{\beta}\cos\alpha\partial_1\}$.

The second part of the Lemma follows immediately from Proposition \ref{Proposition Diagonal metric SO(3)xId_2}, where we proved that the metric is diagonal in this frame. 
\end{proof}

%&+10(c+r^2)^{1/5} \cos^2\alpha(\dot{s}\omega_1-\dot{t}\omega_2+\dot{\delta}(t\tilde{dt}-s\tilde{ds}))\\
%&+10(c+r^2)^{1/5} \sin\alpha\cos^2\alpha(ts\dot{\delta}\sigma_2+(s\dot{t}-t\dot{s})\sigma_3)\\
%&+10(c+r^2)^{1/5} \sin\alpha\cos\alpha((s^2-t^2)\dot{\delta}d\alpha+\dot{\alpha}(t\omega_2-s\omega_1))\\
%&+5(c+r^2)^{1/5}(s^2+t^2)\sin^3\alpha(\dot{\alpha}d\beta-\dot{\beta}d\alpha)\\
%&+20(c+r^2)^{1/5}\cos\alpha\sin^2\alpha (\dot{\beta}(s\tilde{ds}+t\tilde{dt}-(s\dot{s}+t\dot{t})d\beta);

Let $B$ be as in Proposition \ref{Cayley condition KM}. We compute the terms of $B$ in the basis $\{u,v,w,y\}$.
\begin{lemma} \label{B in uvwy SO(3)xId_2}
Let $u,v,w,y$ as in Lemma \ref{uvwy def SO(3)xId_2}. Then, we have:
\begin{align*}
B(v,w,y)&=
\begin{aligned}[t]
&25(c+r^2)^{6/5}\sin\alpha\cos^2\alpha (\dot{\beta}d\alpha-\dot{\alpha}d\beta)\\
&+2\sin^2\alpha(c+r^2)^{-4/5} \left( (t\dot{t}+s\dot{s})(t\omega_2-s\omega_1)-(t^2-s^2)\dot{\delta}(t\tilde{dt}+s\tilde{ds})\right)\\
&+5(c+r^2)^{1/5}\bigg( 2\cos^2\alpha \left(\dot{s}\omega_1-\dot{t}\omega_2+\dot{\delta}(t\tilde{dt}-s\tilde{ds})+\sin\alpha (ts\dot{\delta}\sigma_2+(s\dot{t}-t\dot{s})\sigma_3)\right)\\
&+2\sin\alpha\cos\alpha\left( (s^2-t^2)\dot{\delta}d\alpha+\dot{\alpha}(t\omega_2-s\omega_1)\right)+(s^2+t^2)\sin^3\alpha(\dot{\alpha}d\beta-\dot{\beta}d\alpha)\\
&+4\cos\alpha\sin^2\alpha \left(\dot{\beta}(s\tilde{ds}+t\tilde{dt})-(s\dot{s}+t\dot{t})d\beta\right)\bigg),
\end{aligned}\\
B(w,u,y)&=
\begin{aligned}[t]
&4(c+r^2)^{-4/5}(t^2+s^2)\sin\alpha(\dot{t}\tilde{ds}-\dot{s}\tilde{dt})\\
&+5(c+r^2)^{1/5}\big(-2\cos^2\alpha(s\dot{s}+t\dot{t})\sigma_2-2\cos\alpha\sin\alpha st\dot{\delta}d\beta+\cos\alpha\sin\alpha\dot{\beta}(t\omega_1+s\omega_2)\\
&+2\cos\alpha(s\dot{t}-t\dot{s})d\alpha+2\cos\alpha\dot{\alpha}(t\tilde{ds}-s\tilde{dt})+\cos\alpha\sin\alpha(t^2+s^2)\dot{\alpha}\sigma_2\\
&-\cos\alpha\sin^2\alpha(t^2+s^2)\dot{\beta}\sigma_3\big),
\end{aligned}\\
B(u,v,y)&=
\begin{aligned}[t]
&2(c+r^2)^{-4/5} \sin\alpha(-2\dot{\delta}st(t\tilde{dt}+s\tilde{ds})+(t\dot{t}+s\dot{s})(t\omega_1+s\omega_2))\\
&5(c+r^2)^{1/5}\big( -2\cos^2\alpha(s\dot{s}+t\dot{t})\sigma_3-2\cos\alpha st\dot{\delta}d\alpha+\cos\alpha\dot{\alpha}(s\omega_2+t\omega_1)\\
&+2\cos\alpha\sin\alpha(t\dot{s}-s\dot{t})d\beta+2\cos\alpha\sin\alpha\dot{\beta}(s\tilde{dt}-t\tilde{ds})+(s^2+t^2)\cos\alpha\sin\alpha\dot{\alpha}\sigma_3\\
&+(s^2+t^2)\cos\alpha\sin^2\alpha\dot{\beta}\sigma_2\big),
\end{aligned}\\
B(v,u,w)&=
\begin{aligned}[t]
&2(c+r^2)^{-4/5}\sin^2\alpha (t^2+s^2)(t\tilde{dt}+s\tilde{ds})+10(c+r^2)^{1/5}\big(-\cos^2\alpha(s\tilde{ds}+t\tilde{dt})\\
&+\sin\alpha\cos\alpha(t^2+s^2)d\alpha\big),
\end{aligned}
\end{align*}
where $B$ is defined in Proposition \ref{Cayley condition KM} and $r^2=s^2+t^2$.
\end{lemma}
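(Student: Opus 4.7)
The statement is purely computational: evaluate $B(a,b,c) = c \iprod b \iprod a \iprod \Phi_c$ on each ordered triple drawn from $\{u,v,w,y\}$, starting from the explicit expression for $\Phi_c$ in the diagonalizing coframe (equation \eqref{Phi_c final coordinates SO(3)xId_2}) and the tangent vectors $u,v,w,y$ of Lemma \ref{uvwy def SO(3)xId_2}. The plan is to exploit the fact that the coframe $\{\tilde{ds},\tilde{dt},\omega_1,\omega_2,d\alpha,d\beta,\sigma_2,\sigma_3\}$ diagonalizes $g_c$, so the dual frame $\{e_s,e_t,e_{\omega_1},e_{\omega_2},e_\alpha,e_\beta,e_2,e_3\}$ satisfies the canonical pairing $e_X \iprod X = 1$ and $e_X \iprod Y = 0$ for distinct elements of the coframe. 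This turns each interior product into a mechanical substitution.

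First I would split $\Phi_c$ into its five "blocks'' as written in \eqref{Phi_c final coordinates SO(3)xId_2}: the two volume-like pieces $\tilde{ds}\wedge\tilde{dt}\wedge\omega_2\wedge\omega_1$ and $\sin\alpha\cos^2\alpha\, d\alpha\wedge d\beta\wedge\sigma_3\wedge\sigma_2$, together with the three mixed terms coming from $A_1\wedge\Omega_1$ and the two rotated combinations of $A_2,A_3$ with $\Omega_2,\Omega_3$ (via Lemma \ref{Omegais adapted coordinates SO(3)xId_2} and Corollary \ref{Ai's in diagonalizing coframe SO(3)xId_2}). Next I would expand $u,v,w$ in the diagonalizing frame as
\[
u = -s\,e_{\omega_1} + t\,e_{\omega_2},\qquad
v = e_2+\tfrac{\sin\alpha}{2}(t\,e_s - s\,e_t),\qquad
w = e_3+\sin\alpha\bigl(t\,e_{\omega_1}+s\,e_{\omega_2}\bigr),
\]
and similarly for $y$; these are exactly what one reads from Lemma \ref{uvwy def SO(3)xId_2}.

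For each required triple I would then perform the three successive contractions one basis vector at a time. For instance, to obtain $B(v,u,w)$ I first compute $w\iprod \Phi_c$ by collecting every term of $\Phi_c$ containing $\sigma_3$, $\omega_1$, or $\omega_2$ (weighted by the components of $w$), then contract with $u$ by selecting the terms containing $\omega_1$ or $\omega_2$, and finally with $v$ by selecting those containing $\sigma_2$, $\tilde{ds}$, or $\tilde{dt}$. Because $v,w,y$ each involve at most four dual basis vectors, and $u$ only two, every contraction produces a controllable number of summands. Signs are tracked via the convention that $e_X\iprod(X\wedge\eta)=\eta$ and the standard graded Leibniz rule.

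The only real obstacle is the sheer bookkeeping: the mixed blocks of $\Phi_c$ produce wedge products of six one-forms, and after two contractions one must be careful with signs and with the coefficients $5(c+r^2)^{6/5}$, $4(c+r^2)^{-4/5}$ and $10(c+r^2)^{1/5}$ that distinguish the blocks. I would organize the calculation in a table whose rows are the blocks of $\Phi_c$ and whose columns are the four output one-forms, recording contributions with their signs; collecting columns then yields the four claimed expressions. As a sanity check, one can verify that $B(v,u,w)$ only involves $\tilde{ds},\tilde{dt},d\alpha$ (no $\sigma_i,\omega_i,d\beta$), which is consistent with the fact that $u,v,w$ together span all six "orbit-like'' directions $\partial_1,\partial_2,\partial_3,\partial_\delta$, so that $B(v,u,w)$ is annihilated by contraction with those.
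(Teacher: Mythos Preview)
Your approach is correct and essentially identical to the paper's own proof, which simply expands $\Phi_c$ fully in the diagonalizing coframe (writing out all thirteen $4$-form terms) and then applies multilinearity and the definition $B(a,b,c)=c\iprod b\iprod a\iprod\Phi_c$ directly. The only difference is organizational: the paper gives the expanded $\Phi_c$ and declares the rest ``straightforward,'' whereas you propose a block-by-block table---just a way of managing the same bookkeeping (one small slip: in $B(v,u,w)=w\iprod u\iprod v\iprod\Phi_c$ you contract with $v$ first, not $w$).
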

\begin{proof}
The multilinearity of the Cayley form $\Phi_c$ implies that the same property holds for $B$. Now, expanding the formula (\ref{Phi_c final coordinates SO(3)xId_2}) for $\Phi_c$, we obtain: 
\begin{align*}
\Phi_c=&4(c+r^2)^{-4/5} \tilde{ds}\wedge\tilde{dt}\wedge \omega_2\wedge \omega_1+25(c+r^2)^{6/5}\sin\alpha\cos^2\alpha d\alpha\wedge d\beta\wedge\sigma_3\wedge\sigma_2\\
&10(c+r^2)^{1/5} \bigg(\sin\alpha \tilde{ds}\wedge\omega_1\wedge d\alpha\wedge d\beta+\cos^2\alpha \tilde{ds}\wedge \omega_1\wedge \sigma_2\wedge\sigma_3-\sin\alpha\tilde{dt}\wedge\omega_2\wedge d\alpha\wedge d\beta\\
&-\cos^2\alpha\tilde{dt}\wedge\omega_2\wedge\sigma_2\wedge\sigma_3+2\cos\alpha\tilde{ds}\wedge\tilde{dt}\wedge d\alpha\wedge \sigma_2-2\cos\alpha\sin\alpha\tilde{ds}\wedge\tilde{dt}\wedge d\beta\wedge \sigma_3\\
&+\frac{\cos\alpha}{2}\omega_1\wedge\omega_2\wedge d\alpha\wedge\sigma_2-\frac{\cos\alpha\sin\alpha}{2}\omega_1\wedge\omega_2\wedge d\beta\wedge \sigma_3-\cos\alpha \tilde{ds}\wedge \omega_2\wedge d\alpha\wedge \sigma_3\\
&-\cos\alpha\sin\alpha \tilde{ds}\wedge \omega_2\wedge d\beta \wedge \sigma_2-\cos\alpha\tilde{dt}\wedge \omega_1\wedge d\alpha \wedge \sigma_3-\cos\alpha\sin\alpha \tilde{dt}\wedge \omega_1\wedge d\beta \wedge \sigma_2\bigg).
\end{align*}
It is straightforward to conclude using the definition of $B$. 
\end{proof}

Consider the two-form given in Proposition \ref{Cayley condition KM} that projects to $\eta$ through $\pi_7$. The summands of such two form can be computed through a direct computation involving the terms obtained in Lemma \ref{uvwy def SO(3)xId_2} and Lemma \ref{B in uvwy SO(3)xId_2}.

\begin{corollary}\label{definition Psi_i in SO(3)xId_2}
Let $u,v,w,y$ as in Lemma \ref{uvwy def SO(3)xId_2} and let $\Psi_1:=u^{\musFlat}\wedge B(v,w,y)$, $\Psi_2=v^{\musFlat}\wedge B(w,u,y)$, $\Psi_3=w^{\musFlat}\wedge B(u,v,y)$, $\Psi_4=y^{\musFlat}\wedge B(v,u,w)$, where $B$ is as defined in Proposition \ref{Cayley condition KM}. Then, we have:
\begin{align*}
\Psi_1&=
\begin{aligned}[t] 
&25(c+r^2)^{4/5}\sin\alpha\cos^2\alpha (t\omega_2-s\omega_1)\wedge (\dot{\beta} d\alpha-\dot{\alpha}d\beta)\\
&-(c+r^2)^{-6/5} 2\sin^2 \alpha (t^2-s^2)\dot{\delta} (t\omega_2-s\omega_1)\wedge (t\tilde{dt}+s\tilde{ds})\\
&+5(c+r^2)^{-1/5} \bigg( 2\cos^2\alpha \left( (t\dot{s}-s\dot{t})\omega_2\wedge\omega_1 +\dot{\delta}(t\omega_2-s\omega_1)\wedge (t\tilde{dt}-s\tilde{ds})\right)\\
&+2\sin\alpha\cos^2\alpha \left( ts\dot{\delta}(t\omega_2-s\omega_1)\wedge \sigma_2+(s\dot{t}-t\dot{s})(t\omega_2-s\omega_1)\wedge \sigma_3\right)\\
&+2\sin\alpha\cos\alpha(s^2-t^2)\dot{\delta}(t\omega_2-s\omega_1)\wedge d\alpha+(t^2+s^2)\sin^3\alpha (t\omega_2-s\omega_1)\wedge (\dot{\alpha}d\beta-\dot{\beta}d\alpha)\\
&+4\cos\alpha\sin^2\alpha \left(\dot{\beta}(t\omega_2-s\omega_1)\wedge (s\tilde{ds}+t\tilde{dt})-(s\dot{s}+t\dot{t})(t\omega_2-s\omega_1)\wedge d\beta\right)\bigg),
\end{aligned}\\
\Psi_2&=\begin{aligned}[t]
&25(c+r^2)^{4/5} \big(-2\cos^3\alpha\sin\alpha st\dot{\delta}\sigma_2\wedge d\beta+\cos^3 \alpha\sin\alpha \dot{\beta}\sigma_2\wedge (t\omega_1+s\omega_2)\\
&+2\cos^3 \alpha (s\dot{t}-t\dot{s})\sigma_2\wedge d\alpha +2\cos^3 \alpha \dot{\alpha} \sigma_2\wedge (t\tilde{ds}-s\tilde{dt})-\cos^3 \alpha\sin^2\alpha(t^2+s^2)\dot{\beta}\sigma_2\wedge \sigma_3\big)\\
&+10(c+r^2)^{\!-1/5}\! \bigg(2\sin\alpha	\cos^2\alpha(t^2+s^2)\sigma_2\!\wedge\! (\dot{t}\tilde{ds}-\dot{s}\tilde{dt})\!-\!2\cos^2\alpha\sin\alpha (s\dot{s}+t\dot{t}) (t\tilde{ds}-s\tilde{dt})\!\wedge\! \sigma_2\\
&-2\cos\alpha\sin^2\alpha st\dot{\delta}(t\tilde{ds}-s\tilde{dt})\wedge d\beta+\cos\alpha\sin^2\alpha \dot{\beta}(t\tilde{ds}-s\tilde{dt})\wedge (t\omega_1+s\omega_2)\\
&+2\cos\alpha \sin\alpha (s\dot{t}-t\dot{s})(t\tilde{ds}-s\tilde{dt})\wedge d\alpha-\cos\alpha\sin^2\alpha (t^s+s^2)\dot{\alpha}\sigma_2\wedge (t\tilde{ds}-s\tilde{dt})\\
&+\cos\alpha\sin^3\alpha (t^2+s^2)\dot{\beta}\sigma_3\wedge (t\tilde{ds}-s\tilde{dt})\bigg) +8(c+r^2)^{-6/5} \sin^2\alpha(t^2+s^2)(s\dot{t}-t\dot{s})\tilde{ds}\wedge \tilde{dt},
\end{aligned}\\
\Psi_3&=\begin{aligned}[t]
&25(c+r^2)^{4/5} \big(-2\cos^3\alpha st\dot{\delta} \sigma_3\wedge d\alpha +\cos^3\alpha \dot{\alpha} \sigma_3\wedge (s\omega_2+t\omega_1)+2\cos^3\alpha\sin\alpha(t\dot{s}-s\dot{t})\sigma_3\wedge d\beta \\
&+2\cos^3\alpha \sin\alpha \dot{\beta}\sigma_3 \wedge (s\tilde{dt}-t\tilde{ds})+(s^2+t^2)\cos^3\alpha\sin^2\alpha \dot{\beta} \sigma_3\wedge \sigma_2\big)\\
&-4(c+r^2)^{-6/5} \sin^2\alpha\dot{\delta} st (t\omega_1+s\omega_2)\wedge (t\tilde{dt}+s\tilde{ds})\\
&+5(c+r^2)^{-1/5}\bigg( 2\sin\alpha\cos^2\alpha\left((t\dot{t}+s\dot{s})\sigma_3\wedge (t\omega_1+s\omega_2)-2\dot{\delta}st\sigma_3\wedge(t\tilde{dt}+s\tilde{ds})\right)\\
&-2\sin\alpha\cos^2\alpha (s\dot{s}+t\dot{t})(t\omega_1+s\omega_2)\wedge \sigma_3-2\cos\alpha\sin\alpha st\dot{\delta}(t\omega_1+s\omega_2)\wedge d\alpha\\
&+2\cos\alpha\sin^2\alpha (t\dot{s}-s\dot{t})(t\omega_1+s\omega_2)\wedge d\beta +2\cos\alpha\sin^2\alpha \dot{\beta} (t\omega_1+s\omega_2)\wedge (s\tilde{dt}-t\tilde{ds})\\
&+(s^2+t^2)\cos\alpha\sin^2\alpha \dot{\alpha}(t\omega_1+s\omega_2)\wedge \sigma_3+(s^2+t^2)\cos\alpha \sin^3\alpha \dot{\beta}(t\omega_1+s\omega_2)\wedge \sigma_2 \bigg),
\end{aligned}\\
\Psi_4&=\begin{aligned}[t]
&2(c+r^2)^{-6/5} \sin^2\alpha (t^2+s^2)\left(\dot{\delta} (s\omega_1+t\omega_2)\wedge (t\tilde{dt}+s\tilde{ds})+4(\dot{t}s-\dot{s}t)\tilde{dt}\wedge\tilde{ds}\right)\\
&+50 (c+r^2)^{4/5}\left( -\cos^2\alpha (\dot{\alpha}d\alpha+\sin^2\alpha \dot{\beta}d\beta)\wedge (s\tilde{ds}+t\tilde{dt})+\cos\alpha\sin^3\alpha (t^2+s^2) \dot{\beta} d\beta\wedge d\alpha \right)\\
&+10(c+r^2)^{-1/5}\bigg( \sin^2\alpha (t^2+s^2)(\dot{\alpha}d\alpha+\sin^2 \alpha \dot{\beta}d\beta)\wedge (t\tilde{dt}+s\tilde{ds})-4\cos^2\alpha(\dot{s}t-\dot{t}s)\tilde{ds}\wedge \tilde{dt}\\
&+4\sin\alpha\cos\alpha(t^2+s^2)(\dot{s}\tilde{ds}+\dot{t}\tilde{dt})\wedge d\alpha-\cos^2\alpha \dot{\delta}(s\omega_1+t\omega_2)\wedge (s\tilde{ds}+t\tilde{dt})\\
&+\sin\alpha\cos\alpha(t^2+s^2)\dot{\delta}(s\omega_1+t\omega_2)\wedge d\alpha\bigg),
\end{aligned}
\end{align*}
where $r^2=s^2+t^2$.

Moreover, 
$$\eta=\pi_7(\Psi_1+\Psi_2+\Psi_3+\Psi_4),$$
where $\eta$ and $\pi_7$ are defined in Proposition \ref{Cayley condition KM}.
\end{corollary}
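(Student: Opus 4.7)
The statement is essentially a bookkeeping computation: all the ingredients needed to assemble $\Psi_1, \Psi_2, \Psi_3, \Psi_4$ have already been produced in Lemma \ref{uvwy def SO(3)xId_2} (the musical flats $u^{\musFlat}, v^{\musFlat}, w^{\musFlat}, y^{\musFlat}$) and in Lemma \ref{B in uvwy SO(3)xId_2} (the three-vector $B$-terms). The plan is therefore to substitute these explicit expressions into the four products $u^{\musFlat}\wedge B(v,w,y)$, $v^{\musFlat}\wedge B(w,u,y)$, $w^{\musFlat}\wedge B(u,v,y)$ and $y^{\musFlat}\wedge B(v,u,w)$, distribute the wedge, and collect the terms by powers of $(c+r^2)$.

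For each $\Psi_i$ I would proceed as follows. Observe that $u^{\musFlat}$ and $w^{\musFlat}$ are polynomials in the forms $\omega_1, \omega_2, \sigma_2, \sigma_3, \tilde{ds}, \tilde{dt}$, while $v^{\musFlat}$ and $y^{\musFlat}$ bring in the horizontal forms $d\alpha, d\beta$ as well. The $B$-terms from Lemma \ref{B in uvwy SO(3)xId_2} are sums of monomials with prefactors of the form $(c+r^2)^{6/5}, (c+r^2)^{1/5}$ and $(c+r^2)^{-4/5}$; after multiplying by $u^{\musFlat}$, say, whose prefactor is $(c+r^2)^{-2/5}$, one obtains precisely the three orders $(c+r^2)^{4/5}, (c+r^2)^{-1/5}, (c+r^2)^{-6/5}$ appearing in the statement. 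The only simplifications needed are the vanishing of repeated wedge factors (such as $\omega_1\wedge\omega_1$, $\tilde{ds}\wedge\tilde{ds}$, and so on), combined with the standard anticommutativity of $\wedge$. The individual monomials in the target formulas for $\Psi_1, \Psi_2, \Psi_3, \Psi_4$ can then be matched one by one against the surviving terms of the expansion.

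The last assertion $\eta = \pi_7(\Psi_1 + \Psi_2 + \Psi_3 + \Psi_4)$ is immediate from the very definition of $\eta$ in Proposition \ref{Cayley condition KM}, once one identifies the four summands there with the $\Psi_i$ just introduced; no further computation is required.

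The main obstacle is purely the length and density of the algebra: there are several dozen wedge monomials per $\Psi_i$, and one must keep careful track of signs arising from permuting the $1$-forms into a canonical order, of the cross terms created by the $\sin\alpha$-corrections in $v^{\musFlat}$ and $w^{\musFlat}$, and of the coefficients that combine when identical monomials appear in several pieces of $B$. No conceptual new idea is needed, but an organised computation -- for instance, splitting each wedge by the power of $(c+r^2)$ and treating the horizontal and vertical parts separately -- is essential to avoid arithmetic mistakes.
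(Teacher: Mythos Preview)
Your proposal is correct and matches the paper's own argument: the paper simply states that the $\Psi_i$ are obtained by a direct computation wedging the musical flats from Lemma \ref{uvwy def SO(3)xId_2} with the $B$-terms from Lemma \ref{B in uvwy SO(3)xId_2}, and that the final identity for $\eta$ is the definition from Proposition \ref{Cayley condition KM}. Your description of grouping by powers of $(c+r^2)$ and tracking wedge cancellations is exactly the right organisation, and in fact more detailed than what the paper records.
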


Finally, we turn our attention to the map $\pi_7$. As recalled in Remark \ref{Remark explanation pi_7}, this map is the projection to the linear subspace $\Lambda^2_7$ of the space of $2$-forms on $M$.

\begin{lemma}\label{Basis Lambda^2_7 SO(3)xId_2}
In the coframe $\{\sigma_2, \sigma_3, d\alpha,d\beta, \omega_1,\omega_2,\tilde{ds}, \tilde{dt}\}$, a basis for $\Lambda^2_7$ is given by the following $2$-forms:
\begin{align*}
\lambda_1&:=-\cos\alpha\sigma_2\wedge\omega_1+d\alpha\wedge\omega_2+2\sin\alpha d\beta\wedge \tilde{dt}+2\cos\alpha \sigma_3\wedge \tilde{ds},\\
\lambda_2&:=\cos\alpha\sigma_2 \wedge \omega_2+d\alpha\wedge \omega_1-2\sin\alpha d\beta\wedge \tilde{ds}+2\cos\alpha \sigma_3\wedge \tilde{dt},\\
\lambda_3&:=\cos\alpha\sigma_3\wedge \omega_1 +\sin\alpha d\beta\wedge \omega_2+2\cos\alpha \sigma_2\wedge \tilde{ds}-2d\alpha\wedge \tilde{dt},\\
\lambda_4&:=-\cos\alpha\sigma_3\wedge\omega_2+\sin\alpha d\beta\wedge \omega_1+2\cos\alpha \sigma_2 \wedge \tilde{dt} +2d\alpha\wedge \tilde{ds},\\
\lambda_5&:=5(c+r^2)\cos\alpha \sigma_3 \wedge d\alpha +5(c+r^2)\sin\alpha\cos\alpha \sigma_2\wedge d\beta+2\omega_2 \wedge \tilde{ds}+2\omega_1\wedge \tilde{dt},\\
\lambda_6&:=5(c+r^2)\sin\alpha\cos\alpha \sigma_3\wedge d\beta -5(c+r^2)\cos\alpha \sigma_2\wedge d\alpha+\omega_2 \wedge \omega_1+4\tilde{dt}\wedge \tilde{ds},\\
\lambda_7&:=5(c+r^2)\sin\alpha d\beta\wedge d\alpha +5(c+r^2)\cos^2\alpha \sigma_3\wedge \sigma_2+2\tilde{ds}\wedge \omega_1-2\tilde{dt}\wedge \omega_2.
\end{align*}
\end{lemma}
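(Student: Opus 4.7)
My approach exploits the fact that $\Lambda^2_7$ has dimension $7$ (Remark \ref{Remark explanation pi_7}), so it suffices to show the seven forms $\lambda_1,\dots,\lambda_7$ are linearly independent and each lies in $\Lambda^2_7$.

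Linear independence will follow by inspection: each $\lambda_i$ contains a distinguishing basic 2-form in the coframe $\{\sigma_2,\sigma_3,d\alpha,d\beta,\omega_1,\omega_2,\tilde{ds},\tilde{dt}\}$ that does not occur in any other $\lambda_j$. Concretely, $d\alpha\wedge\omega_2$ appears only in $\lambda_1$, $d\alpha\wedge\omega_1$ only in $\lambda_2$, $d\alpha\wedge\tilde{dt}$ only in $\lambda_3$, $d\alpha\wedge\tilde{ds}$ only in $\lambda_4$, $\sigma_3\wedge d\alpha$ only in $\lambda_5$, $\sigma_2\wedge d\alpha$ only in $\lambda_6$, and $d\beta\wedge d\alpha$ only in $\lambda_7$. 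Thus any linear relation $\sum c_i\lambda_i=0$ forces each $c_i=0$.

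To prove membership, I would verify $\pi_7(\lambda_i)=\lambda_i$ for every $i$, using the formula from Proposition \ref{Cayley condition KM}: $\pi_7(u^{\musFlat}\wedge v^{\musFlat})=\tfrac14(u^{\musFlat}\wedge v^{\musFlat}+u\iprod v\iprod \Phi_c)$. By linearity this reduces to evaluating $u\iprod v\iprod \Phi_c$ for each of the four basic 2-forms appearing in each $\lambda_i$. The musical isomorphism is explicit because the metric (\ref{metric Diagonal SO(3)xId_2}) is diagonal in our coframe; for example, $(d\alpha)^\sharp=(5(c+r^2)^{3/5})^{-1}\partial_\alpha$, $(\sigma_2)^\sharp=(5(c+r^2)^{3/5}\cos^2\alpha)^{-1}\partial_2$, $(\omega_1)^\sharp=((c+r^2)^{-2/5})^{-1}\,e_{\omega_1}$, and so on, with the dual frame supplied by Lemma \ref{e_i w.r.t partial_i SO(3)xId_2}. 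The contraction $u\iprod v\iprod \Phi_c$ is then read off from \eqref{Phi_c final coordinates SO(3)xId_2} by isolating the 4-form summands that contain $u^{\musFlat}\wedge v^{\musFlat}$, wedged with their complementary 2-form factor. Summing the four contributions arising from the four terms of $\lambda_i$ should produce precisely $3\lambda_i$, giving $\lambda_i+\widehat\Phi(\lambda_i)=4\lambda_i$, i.e.\ $\pi_7(\lambda_i)=\lambda_i$.

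The main obstacle is bookkeeping: each $\lambda_i$ has four summands and each contraction produces several new basic 2-forms, so one must track a large number of terms and verify the expected cancellations, keeping the factors $\cos\alpha$, $\sin\alpha$ and the powers of $(c+r^2)$ introduced by the non-unit-norm coframe consistent. A more conceptual route that sidesteps much of this grind is to pass first to the orthonormal coframe $e^1,\ldots,e^8$ in which (\ref{Phi_c final coordinates SO(3)xId_2}) reduces to the model Cayley form $\Phi_{\R^8}$ on $\R^4\oplus\R^4$; there $\Lambda^2_7$ admits a well-known basis, and converting back through the diagonal metric weights produces exactly the listed expressions $\lambda_1,\ldots,\lambda_7$, which simultaneously proves independence and membership. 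Either way, the linear independence argument above will close out the proof.
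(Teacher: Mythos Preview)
Your proposal is correct and follows exactly the paper's approach: verify $\pi_7(\lambda_i)=\lambda_i$ via the explicit formula in Proposition~\ref{Cayley condition KM}, then use linear independence together with $\dim\Lambda^2_7=7$. You simply supply more detail than the paper (which only says ``it is easy to verify'' and ``they are linearly independent''), including a clean distinguishing-monomial argument for independence and a sketch of the contraction computation; the alternative orthonormal-coframe route you mention is a legitimate shortcut but not what the paper does.
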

\begin{proof}
Using the explicit formula for $\pi_7$ given in Proposition \ref{Cayley condition KM}, it is easy to verify that $\pi_7(\lambda_i)=\lambda_i$ for all $i=1...7$. We deduce that the $\lambda_i$s form a basis of $\Lambda_2^7$ as they are linearly independent and the dimension of $\Lambda_2^7$ is $7$.
\end{proof}

At this point, the proof of Theorem \ref{Complicated ODEs SO(3)xId_2} follows easily. Indeed, we can rewrite the sum of the $\Psi_i$ given in Corollary \ref{definition Psi_i in SO(3)xId_2} as follows:
\begin{align*}
\Psi_1+\Psi_2+\Psi_3+\Psi_4=&5(c+r^2)^{-1/5} \left(-5(c+r^2)\sin\alpha\cos^2\alpha \dot{\beta}t+r^2 \sin^3\alpha \dot{\beta}t-2\sin\alpha\cos\alpha ts^2 \dot{\delta}\right)\lambda_1 \\
&+5(c+r^2)^{-1/5}\!\! \left(5(c+r^2)\sin\alpha\cos^2\alpha \dot{\beta}s\!-\!r^2 \sin^3\alpha \dot{\beta}s-2\sin\alpha\cos\alpha t^2s \dot{\delta}\right)\lambda_2\\
&+5(c+r^2)^{-1/5}\bigg(5(c+r^2)\cos^2\alpha t\dot{\alpha}+4\cos\alpha\sin\alpha t^2\dot{t}+2\sin\alpha\cos\alpha st\dot{s}\\
&+2\sin\alpha\cos\alpha s^2\dot{t}-r^2\sin^2\alpha\dot{\alpha}t\bigg)\lambda_3+5(c+r^2)^{-1/5}\bigg(-5(c+r^2)\cos^2\alpha s\dot{\alpha}\\
&-4\cos\alpha\sin\alpha s^2\dot{s}-2\sin\alpha\cos\alpha st\dot{t}-2\sin\alpha\cos\alpha t^2\dot{s}+r^2\sin^2\alpha\dot{\alpha}s\bigg)\lambda_4\\
&-2\cos^2\alpha st\dot{\delta}\left(25(c+r^2)^{-1/5} \lambda_5\right)\\
&+2\cos^2\alpha(t\dot{s}-s\dot{t})\left(25(c+r^2)^{-1/5} \lambda_6\right)\\
&+2(s^2+t^2)\sin^2\alpha\cos\alpha\dot{\beta}\left(25(c+r^2)^{-1/5} \lambda_7\right).
\end{align*}
From Corollary \ref{definition Psi_i in SO(3)xId_2} and Lemma \ref{Basis Lambda^2_7 SO(3)xId_2}, we deduce the ODEs of Theorem \ref{Complicated ODEs SO(3)xId_2}.

\section{}\label{Appendix analysis ODE Sp(1)xId_1}

In this appendix, we study in detail the ODE (\ref{final ODE Sp(1)xId_1}). First, observe that in the chart we are considering the orbits are $3$-dimensional, hence, the derivative $(\dot\alpha,\dot r)$ can not vanish. In particular, we can reparametrize the curve such that $\dot\alpha=f_1$ and deduce from $(\ref{final ODE Sp(1)xId_1})$ that $\dot r=f_2$. Indeed, we recall that (\ref{final ODE Sp(1)xId_1}) can be rewritten as:
\[
f_1 \dot r-f_2\dot\alpha=0.
\]
Since $(\dot\alpha,\dot r)\parallel(f_1,f_2)$, we recasted the problem into finding the integral curves of the vector field $X=(f_1,f_2)$. Observe that $X$ makes sense on the whole strip $\{r\geq0, \alpha\in[-\pi/2,\pi/2]\} $ and vanishes at $(-\pi/2,0)$ or along the curve $\alpha=\pi/2$. It follows that two solutions of the ODE can only intersect there. Moreover, $\{r=0\}$ and $\{\alpha=-\pi/2\}$ are solutions.

We split our analysis in $3$ parts, corresponding to the different coupled signs of $f_1$ and $f_2$:
\begin{enumerate}
	\item $\alpha\leq\beta_c(r)$;
	\item $\alpha_c(r)\leq\alpha$;
	\item $\beta_c (r)<\alpha<\alpha_c(r)$.
\end{enumerate}

\subsection{The set \texorpdfstring{$\alpha\leq\beta_c(r)$}{Lg}} Since in this set $f_1>0$ and $f_2<0$, starting from an initial point and going forward in time the solution needs to decrease in $r$ and increase in $\alpha$ in a monotonic way, until it hits $\beta_c$. There, $\dot r=0$, so, the solution intersects the curve horizontally. 

If we instead go backwords in time $\alpha$ decreses, while $r$ increases. Hence, the solution can either meet the vertical line $\alpha=-\pi/2$ at some $r_0>0$ or explode at infinity. However, the first instance can not occur since the vertical line $\alpha=-\pi/2$ is a solution of the system of ODEs as well.

\subsection{The set \texorpdfstring{$\alpha_c(r)\leq\alpha$}{Lg}} In this case, we have $f_1<0$ and $f_2>0$,  hence, if we take a point and study the solution going backwards in time the solution needs to decrease in $r$ and increase in $\alpha$. We deduce that it passes through the vertical line $\alpha=\pi/2$ horizontally at some $r_0>0$. Indeed, it does not meet $\{r=0\}$, as the zero section is another solution of the system of ODEs. Moreover, if we reparametrize (\ref{final ODE Sp(1)xId_1}) such that $\dot\alpha=1$, which we can do in the complement of $\alpha_c$, we see that the solution is $r=r(\alpha)$ in this region and that $dr/d\alpha=f_2/f_1<0$. Since $f_2/f_1 \to0$ as $\alpha\to\pi/2$, each solution tends to the vertical line horizontally, and hence, they can not intersect there.

If we go forward in time, we either have $r\to\infty$ or we pass through $\alpha_c$ vertically. Under the same reparametrization as before, we deduce that the solutions $r(\alpha)$ with initial conditions along the line $\{\alpha=\arcsin(-1/4)\}$ can not explode and they need to intersect $\alpha_c$. Moreover, each point of $\alpha_c$ can be reached by such a solution.

\subsection{The set \texorpdfstring{$\beta_c (r)<\alpha<\alpha_c(r)$}{Lg}} As before, we pick a point and we see what happens to the solution going forwards and backwards in time. From the fact that $f_1,f_2>0$, there are only two possibilities forward in time: we either have $r\to+\infty$ as $\alpha\to\arcsin(-1/4)$ or we meet $\alpha_c$ vertically. The latter case will not happen, otherwise, we would have a solution with a cuspid singularity. 

If we go backwards in time, we either intersect $\alpha_c$, $\beta_c$ or $(-\pi/2,0)$. It is obvious that there are solutions intersecting $\alpha_c$ and $\beta_c$. In order to prove the existence of the last case, consider the segment given by an horizontal line restricted to this set. Let $K_{\alpha_c}$ be the subset from which the solutions will meet $\alpha_c$ backwards in time, and let $K_{\beta_c}$ the one relative to $\beta_c$. It is easy to show that these subsets are disjoint connected open subintervals arbitrarily close to each other, using continuity of the initial data. As this can not cover the starting interval we conclude.

            			\bibliographystyle{plain}

\end{document}